\documentclass[12pt]{amsart}
\usepackage[utf8]{inputenc}
\usepackage{lmodern}

\usepackage{mathabx}
\usepackage{mathtools, amsmath, amsthm, amssymb, bm, centernot, cancel, scalerel}
\usepackage{enumerate}
\usepackage{stmaryrd, algorithm, algpseudocode}
\usepackage{enumitem, hyperref, caption, accents, soul, xcolor}
\usepackage{multirow, multicol, tabularx}
\usepackage{subfigure}
\usepackage{tikz, pgfplots, pgfplotstable}
\usetikzlibrary{arrows, arrows.meta}
\usetikzlibrary{angles, quotes}

    \usepackage[left=0.75in, right=0.75in, top=1in, bottom=1in]{geometry}
    \usepackage{enumitem}
    \usepackage{lipsum}
    \setlist[itemize]{leftmargin=*}

    \usepackage[
    backend=biber,
    style=alphabetic,
    sorting=nyt,
    giveninits=true
    ]{biblatex}
\AtEveryBibitem{
    \clearfield{urlyear}
    \clearfield{urlmonth}
    \clearfield{url}
    \clearfield{doi}
    \clearfield{issn}
    \clearfield{isbn}
    \clearfield{note}
    \clearfield{eprint}
}

    \hypersetup{
        colorlinks=true, 
        linkcolor=blue,
        citecolor=red,
    }

    \newtheorem{theorem}{Theorem}[section]  
    \newtheorem{definition}[theorem]{Definition}
    \newtheorem{corollary}[theorem]{Corollary}
    \newtheorem{lemma}[theorem]{Lemma}
    \newtheorem{remark}[theorem]{Remark}
    
    \newtheorem{proposition}[theorem]{Proposition}
    \newtheorem{assump}[theorem]{Assumption}

    \numberwithin{equation}{section}

    \let\pa\partial

    \newcommand{\R}{\mathbb R}
    \newcommand{\bA}{\mathbf A}
    \newcommand{\bB}{\mathbf B}
    \newcommand{\bC}{\mathbf C}
    
    \newcommand{\bE}{\mathbf E}
    \newcommand{\bF}{\mathbf F}

    \newcommand{\bI}{\mathbf I}
    \newcommand{\bJ}{\mathbf J}
    \newcommand{\bP}{\mathbf P}

    \newcommand{\bN}{\mathbf N}

    \newcommand{\bR}{\mathbf R}
    \newcommand{\bS}{\mathbf S}

    \newcommand{\ba}{\mathbf a}
    \newcommand{\bb}{\mathbf b}

    \newcommand{\bn}{\mathbf n}
    
    \newcommand{\be}{\mathbf e}
    
    \newcommand{\bq}{\mathbf q}
    \newcommand{\br}{\mathbf r}
    \newcommand{\bt}{\mathbf t}

    \newcommand{\bT}{\mathbf T}
    \newcommand{\bu}{\mathbf u}
    
    \newcommand{\bv}{\mathbf v}
    \newcommand{\bw}{\mathbf w}
    
    \newcommand{\bx}{\mathbf x}

    \newcommand{\bbf}{\mathbf f}

    \newcommand{\G}{\Gamma}

     \newcommand{\divM}{{\mathop{\,\rm div}}_{\!M}}
    
     \newcommand{\DivM}{{\mathop{\,\rm Div}}_{\!M}}
     \newcommand{\CurlG}{{\mathop{\,\rm Curl}}_{\Gamma}}
     \newcommand{\CurlM}{{\mathop{\rm Curl}}_{\!M}}
      \newcommand{\curlGscal}{{\mathop{\,\rm \mathbf{curl}}}_{\Gamma}}
     \newcommand{\CurlGvec}{{\mathop{\rm \mathbf{Curl}}}_{\Gamma}}
      
     \newcommand{\CurlMvec}{{\mathop{\,\rm \mathbf{Curl}}}_{\!M}}
     \newcommand{\curlG}{{\mathop{\,\rm curl}}_{\Gamma}}

    \newcommand{\nablaM}{\nabla_{\!\!M}}

    \newcommand{\odotcolon}{:}

    \renewcommand{\div}{\textrm{div}\ \!}

    \newcommand{\tr}{{\rm tr}}

    \DeclareGraphicsExtensions{.pdf,.eps,.ps,.eps.gz,.ps.gz,.eps.Y}

    \newcommand{\bsigma}{\boldsymbol{\sigma}}

    \usepackage{mathtools}

    \newcommand\wwidehat[1]{\arraycolsep=0pt\relax%
    \begin{array}{c}
    \stretchto{
      \scaleto{
        \scalerel*[\widthof{\ensuremath{#1}}]{\kern-.5pt\bigwedge\kern-.5pt}
        {\rule[-\textheight/2]{1ex}{\textheight}} 
      }{\textheight} %
    }{0.5ex}\\           
    #1\\                 
    \rule{*ex}{0ex}
    \end{array}
    }

    \def\XXint#1#2#3{{\setbox0=\hbox{$#1{#2#3}{\int}$ }
    \vcenter{\hbox{$#2#3$ }}\kern-.6\wd0}}

    \pgfplotsset{compat=1.18}

\usepackage[normalem]{ulem}

\title{Practical tensor calculus on embedded \\ submanifolds  of arbitrary codimension}
\author{Vladimir Yushutin\\ 
	vyushuti@utk.edu \\ Department of Mathematics, University of Tennessee, Knoxville}

\date{}

\begin{document}
\begin{abstract}
 We present a fully extrinsic, parametrization-free variant of tensor calculus on embedded, possibly evolving, submanifolds with boundary in arbitrary dimension and codimension. The proposed approach is component-free and, for general rank tensors, covers fundamental concepts such as  tangential projection,  extrinsic and covariant derivatives, the extrinsic Stokes' formula, and the Laplace-Beltrami operator. The distinctive features of the developed framework are its algorithmic recursivity and the transparency provided by the special row representation of tensors reminiscent of the recursive data structure of complete trees. Consequently, the suggested tensor calculus is amenable to computations and theoretical analysis, and the latter is demonstrated for general dimension and codimension through three standalone applications. First, we derive a new extrinsic conservation law, namely the principle of vanishing extrinsic momentum, for incompressible Euler flows on Riemannian manifolds. Second, we revisit the concept of Cauchy stress on embedded submanifolds with positive codimension and argue that the conservation of angular momentum implies tangentiality and symmetry of the stress tensors only when they are restricted to act on tangential orientations. Third, for evolving submanifolds, we introduce the material derivative of tensor fields of general rank in an extrinsic manner and derive an expression for the rate of change of the associated tensorial Dirichlet energy. The paper provides a practical notation and tools that are immediately usable in mathematical modeling, analysis of geometry-aware PDEs and in numerical methods on embedded submanifolds.
\end{abstract}

\maketitle

\textbf{Keywords}: 
\setcounter{tocdepth}{2} 

\section{Introduction}\label{sec:intro}
Many scientific problems involve submanifolds embedded in $\mathbb{R}^n$ and tensor fields on them. The related mathematical tools, such as differential operators, product and integration-by-parts rules, are often introduced intrinsically using local coordinate parametrizations and/or relying on Ricci index notation, see \cite{betounes1986kinematics, fosdick2009surface, jaric2020transport, NITSCHKE2022104428, nitschke2023tangential, nitschke2023tensorial, grinfeld2013introduction, pollard2025gauge}. Although powerful, this formalism can nevertheless hinder computations, especially in the presence of geometric evolution.  To this end, we propose a parametrization- and component-free version of tensor calculus that, as we believe, is extremely well-suited for computations on submanifolds of any dimension and codimension.

We  rely on the ambient Cartesian derivative in $\mathbb{R}^n$ and employ the extrinsic level set description of the submanifold's geometry \cite{osher1988fronts}. This central idea is not novel and has been tremendously successful in various applications, so the following papers represent only a selection of the most relevant works in the current context. In \cite{burman_cut_2018}, the authors considered a computational method for an elliptic PDE on a submanifold and for general dimension and codimension but only in the scalar case of the involved Laplace-Beltrami operator. Although general rank tensors were considered within the parabolic setting in \cite{bachini2024diffusion} on a two-dimensional surface embedded in $\mathbb{R}^3$, explicit expressions and computational results are presented only for tensor problems of rank at most two, see \cite[Section 2.2.2, Section 4]{bachini2024diffusion}. In \cite{fries2020unified}  equilibrium equations of the finite strain theory of ropes and membranes were presented and therefore the calculus there focuses on rank two tensors at most. Other notable applications that are restricted to the cases of codimension one  or  rank two tensor fields are \cite{hansbo2014finite, schollhammer2021higher, bouck_hydrodynamical_2024, jankuhn_incompressible_2018, olshanskii_finite_2018}.
Building upon and generalizing these references, we develop a notational framework which:
\begin{itemize}
    \item  considers general rank tensor fields on the submanifold,
    \item applies to arbitrary dimension and co‑dimension of the submanifold,
    \item  is algorithmically recursive,
    \item covers projection, gradient and the extrinsic Stokes' formula.
\end{itemize}
To demonstrate the practicality of the framework and show its main properties, we consider three complementing applications and derive novel results in Sections~\ref{sec:Euler_eq}, \ref{sec:stress_submanifold} and \ref{sec:evolving_dirichlet}. 

The first application in Section~\ref{sec:Euler_eq} concerns the well-known Euler equations on Riemannian manifolds \cite{arnold1998topological}. It is known, see, e.g., \cite{markakis2017conservation, gilbert2023geometric}, that any sufficiently smooth solution locally preserves the velocity component in the direction of the  Killing field for every symmetry of the manifold, just as translations of $\mathbb{R}^n$ correspond to the conservation of every component of the momentum vector.  Using the suggested framework for any embedding of the manifold, we discover a global  conservation principle  of the vanishing extrinsic momentum  even in the absence of any symmetries and for  general dimension and codimension. The principle can also be interpreted as an ambient force balance between the Young-Laplace force, the boundary reaction, and the centripetal force. The result generalizes an often overlooked property of \textit{zero net force exerted by pressure} on a fixed proper domain in $\mathbb{R}^{2}$ or $\mathbb{R}^{3}$  that carries the Euler flow \cite{constantin1988navier, Majda_Bertozzi_2001}.

In the second application in Section~\ref{sec:stress_submanifold} we revisit the concept of Cauchy stresses on embedded submanifolds. Recall that the Cauchy stress at a point is a rule that assigns a normalized force, called \textit{the stress vector}, for every infinitesimal oriented \textit{cut} of the material at that point. Historically, elasticity theory studying the  deformation of ropes, strings, beams, membranes, shells, and plates has been the major driver of the development of mathematical models posed on submanifolds.  In these cases of elastic structures of positive codimension, one faces the modeling choice of what constitutes an admissible cut for the concept of Cauchy stress -- think of an elastic, curved beam  and whether the loads on an oblique and on the perpendicular sections differ or not. Of course, such choice depends on the modeling goals, but most of the elastic structures above respond to an external load with a generally oriented stress vector field that rarely respects the tangent plane of the underlying submanifold. Yet, many other available in the literature continuum models on submanifolds of, typically, codimension one assume, sometimes implicitly, the tangentiality of the stress tensor, and references \cite{scriven1960dynamics, helfrich1973elastic, gurtin1975continuum} are often cited for a mathematical justification of this assumption; see, e.g., \cite{mohammadi2013surface,jankuhn_incompressible_2018, martinez2018temporal, pruss2021navier, olshanskii_finite_2021,chen2022gurtin, patil2024plane}. According to \cite{gurtin1975continuum}, as Newton laws and the principle of conservation of angular momentum lead to the symmetry of the stress tensor in classical continuum mechanics, they also imply its tangentiality for submanifolds of positive codimension. In our understanding, this implication is correct but only because the stresses in \cite{gurtin1975continuum} were assumed  to take only tangentially oriented cuts as input from the very start of the consideration -- think of a stress tensor on the aforementioned curved beam that disregards the possible oblique orientation of the cut and outputs the stress vector for the perpendicularly oriented component of the cut only. Although reasonable, the latter choice of admissible cuts requires further justification for every new model, especially for submanifolds of codimension greater than one. Motivated by the elaborated discussions in \cite{steigmann1999elastic,capovilla2002stresses,dharmavaram2025shear}, we will completely relax this assumption by allowing cuts that are generally oriented in $\mathbb{R}^n$, find the equations of force and torque equilibrium, which together imply the cancellation of the normal-at-tangential component of the stress vector, and argue that the Cauchy stress tensor on a submanifold may be non-symmetric and non-tangential  without violating   Newton laws of mechanics. Moreover, equilibrium equations for a general embedded submanifold  with arbitrary dimension and codimension will be discussed  unlike \cite{gurtin1975continuum}.

The third application in Section~\ref{sec:evolving_dirichlet}  studies an evolving submanifold and derives the temporal rate of change of the Dirichlet energy for a general rank tensor field. The Dirichlet energy involves the submanifold gradient and is a typical component of various continuum models and PDEs \cite{BarrettGarckeNuernberg2008, DziukElliott2010, ElliottStinner2010, grinfeld2013introduction, dziuk_finite_2013} on evolving domains. By introducing the material derivative extrinsically on an evolving submanifold, we compute generalizing \cite{dziuk_finite_2013} the rate of change of the tensorial Dirichlet energy for arbitrary dimension, codimension, and tensor rank.

The paper is organized as follows. Section~\ref{sec:prelim} introduces the
basics of the geometric  setting and further illustrates the case of low-dimensional submanifolds. The crucial recursive row representation of the general rank tensor is introduced in Section~\ref{sec:recursive} along with its tangential projection. In Section~\ref{sec:calculus} we develop a variant of tensor calculus on embedded submanifolds using the recursive representation of Section~\ref{sec:recursive}. Also, a discussion of the tensorial covariant derivative and the associated Laplace--Beltrami operator is provided. Sections~\ref{sec:Euler_eq}, \ref{sec:stress_submanifold} and \ref{sec:evolving_dirichlet} show the suggested tensor calculus in the three example applications discussed above for the arbitrary dimension and codimension. 
\section{Preliminaries}\label{sec:prelim}
\noindent
We use curly brackets $\{$ and $\}$ to form sequences (lists) of objects enclosed in them. For example,  $\{\be_x, \be_y, ..., \be_z\}$ and $\{\be^x, \be^y,  ..., \be^z\}$ denote the standard and dual ordered bases  of $\mathbb{R}^n$ indexed by $\mathcal{C}:=\{x,y,..., z\}$ and the $(n-3)$ omitted symbols will not be explicitly needed  in this paper. We use the asterisk symbol $*$ in  expressions to indicate that they are stated for every element of the index set $\mathcal{C}$. For instance, $\|\be_*\|_2=1$ means that every element of the standard basis has the unit Euclidean norm. An expression with $*$ that is premised with $\sum_*$ means the summation over all elements of $\mathcal{C}$. For example, while $\{f_*\}$ denotes the ordered tuple $\{f_x,f_y, ..., f_z\}$ of $n$ given functions, $\sum_* f_*$ means their sum $f_x+f_y+...+f_z$. 

We distinguish a vector $\bu$ and the corresponding linear functional denoted by $\bu^T$,
\begin{align*}
\bu:=u^x\be_x+u^y\be_y+...+u^z\be_z = \textstyle{\sum_*}u^*\be_*\,,\qquad\bu^T:=u_x\be^x+u_y\be^y+...+u_z\be^z=\textstyle{\sum_*}u_*\be^*\,,
\end{align*}
and  $u^*=u_*$ since the bases are orthonormal.  The corresponding duality trivially reads as follows:
\begin{align}\label{dual_pairing}
    \bu^T(\bv)=\bu\cdot\bv = u^x v^x+u^y v^y+...+u^z v^z =\textstyle{\sum_*} u^* v^*=\bv^T(\bu)\,.
\end{align}
In this paper a new notation is suggested that is convenient for the tensor calculus on submanifolds and is based on \eqref{dual_pairing} only. Because of this, the usage of coordinate components and the matrix notation is intentionally minimized. Nevertheless, the well-known definitions  \eqref{projectors} and \eqref{basics} are presented in the matrix notation, which is replaced by the row representation of tensors starting from Section~\ref{sec:calculus}. 

Consider an $(n-m)$-dimensional oriented submanifold  $M$  embedded in $\mathbb{R}^n$  given by the simultaneously vanishing level set functions $d_1,..., d_{m}\in{}C^2(\Omega_\delta)$,  $1 \leq m <n$. Here $\Omega_\delta:=\{\bx\in\mathbb{R}^n: \sum_{i=1}^m|d_i(\bx)|<\delta\}$ is a tubular neighborhood of $M$ of sufficiently small thickness $\delta>0$ and $m$ is the codimension of $M$. The unit vector field $\bn_i(\bx):=\nabla{}d_i/\|\nabla{}d_i\|\in{}C^1(\Omega_\delta)^n$ is orthogonal to the level sets of $d_i(\bx)$ for each $1\leq i\leq m$. The projection matrices $\bN,\, \bP\in C^1(\Omega_\delta)^{n\times n}$ onto the normal and tangent subspaces are given for every $\bx\in\Omega_\delta$ by
\begin{align}\label{projectors}
    \bN(\bx):=\bn_1\bn_1^T+...+\bn_m\bn_m^T\,,\qquad \qquad   \qquad \bP(\bx):=\bI-\bN(\bx)\,,
\end{align}
where $\bI$ is the identity matrix in $\mathbb{R}^n$ and $\bn_i\bn_i^T$, $1 \leq i \leq m$, denotes the column-row product. For general codimension $m$, the so-called \textit{submanifold gradients} $\nablaM f \in C(\Omega_\delta)^n$ of a scalar field $f\equiv f(\bx) \in C^1(\Omega_\delta)$ and $\nablaM \bu \in C(\Omega_\delta)^{n\times n}$ of a vector field $\bu\equiv \bu(\bx)\in C^1(\Omega_\delta)^n$ along with divergence $\divM\bu\in C(\Omega_\delta)$ are defined in 
\begin{align}\label{basics}
 \nablaM{}f :=(\nabla f)\bP\,,\qquad \qquad \nablaM\bu := (\nabla\bu)\bP
 \,,\qquad\qquad 
 \divM \bu  := \tr(\nablaM\bu)\,,
 \end{align}
 where the Cartesian gradient $\nabla{} f$ is a row and $\nabla\bu$ applies the scalar gradient to each component of the column $\bu$. Although the above definitions are given for fields in $\Omega_\delta$, the values of the operators therein for the points $\bx\in M$ only depend on the traces of their field arguments on $M$, see \cite{olshanskii_trace_2017} and Section~\ref{sec:material_perspect}. Therefore, \eqref{basics} is also applicable to fields that are given on $M$ only and this crucial aspect  has been useful in the formulation and the numerics of surface PDEs, i.e. in the case of codimension $m=1$, see \cite{bouck_hydrodynamical_2024}. Also,  submanifold gradients in \eqref{basics} admit the notion of weak derivatives for fields on $M$, see \cite{olshanskii_finite_2009}   for $m=1$ and the general case  follows similarly. For simplicity of presentation,  we assume that all tensor objects are sufficiently regular fields on $M$  and the operators in \eqref{basics} are well-defined for $M$. In light of this, we often omit $\bx \in M$ in the remainder of the paper.
 
 A general vector field $\bu$ on $M$ can be split into normal and tangential components,
 $\bu=\bP\bu+\bN\bu$.  The tangential vector field $\bP\bu$ can be studied via  local parametrizations to deduce intrinsic results that are readily transferable to the embedded $M$. For example, general Stokes' theorem on manifolds with boundary $\pa M$ implies for any tangential vector field $\bu=\bP\bu$ that
\begin{align}\label{generalStokes}
\int_M\divM (\bP\bu)
    &=\int_{\pa{}M}(\bP\bu)\cdot{}\bm{t}
    \,.\end{align}
    where a tangent to $M$ vector field $\bm t=\bP\bm t$ is the so-called outer \textit{co-normal} on the boundary $\pa{} M$. Note that the level set description of embedded submanifolds needs to be augmented to account for the boundary $\pa{}M$.  The integrals in  \eqref{generalStokes} are taken with respect to the induced measures.

    \subsection{Mean curvature vector}
Next, we introduce the mean curvature vector $\bm{\kappa}$ via the position vector  $\br(\bx)$ restricted to $\bx\in M$. Note that divergence of a matrix is applied row-wise.
\begin{definition}\label{mean_curvature_vector}
    The mean curvature vector $\bm\kappa\in C(\Omega_\delta)^n$ for any codimension $m$ is defined by 
\begin{align*}
\bm{\kappa}:=-\divM(\nablaM\br),\qquad{}\qquad{}\qquad{}\br(\bx):=\bx=(x,y,..., z)\,.
\end{align*}
\end{definition}
Because the operators in \eqref{basics} are row-wise, the components of $\bm\kappa$ can be expressed as in
\begin{align}\label{mean_curvature_vector_components}
    \bm{\kappa}_j=-\divM(\bP\nabla{x_j})  = -\divM(\bP\be_j)= -\divM \bP_j = \divM \bN_j\,,\qquad   1\leq \forall j \leq n\,,
\end{align}
where $\bP_j:=\bP\be_j$ and $\bN_j:=\bN\be_j$ denote the $j$-th columns of the matrices in \eqref{projectors}. Note that, for codimension $m=1$, the mean curvature vector  $\bm\kappa=\kappa\bn$  with the normal $\bn$ and the scalar mean curvature $\kappa = \tr(\nablaM\bn)$. For the general codimension $m>1$, $\bm\kappa$ is simply normal, i.e. $\bN\bm\kappa = \bm\kappa$, which is proven in the next proposition along with a generalization of \eqref{generalStokes} for non-tangential vector fields.
\begin{proposition}[extrinsic Stokes' formula]\label{vector_parts}
The mean curvature vector $\bm\kappa$ satisfies $\bP\bm\kappa=0$, and for any vector field $\bu$ on $M$ with boundary $\pa{}M$, we have
\begin{align*}
\int_M\divM\bu
    &=\int_{\pa{}M}\bP\bu\cdot{}\bm{t}+ \int_M \bN\bu\cdot \bm{\kappa}
    \,.\end{align*}
\end{proposition}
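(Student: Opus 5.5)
The plan is to split $\bu=\bP\bu+\bN\bu$ and handle the two pieces separately. The tangential piece is covered by the general Stokes' formula \eqref{generalStokes}. For the normal piece I would use a product rule together with the identity $\bm\kappa_j=\divM\bN_j$ from \eqref{mean_curvature_vector_components}.

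For the normal piece, write $\bN\bu=\sum_j (\bN\bu)^j\be_j$. Since $\bN$ is symmetric, $(\bN\bu)^j=\bN_j\cdot\bu$, and therefore $\bN\bu=\sum_j u^j\bN_j$, where $u^j=\bu\cdot\be_j$. The row-wise definitions \eqref{basics} give the product rule $\divM(f\bv)=f\,\divM\bv+\nablaM f\,\bv$, where the last term is the row $(\nabla f)\bP$ applied to $\bv$. Applying it,
\begin{align*}
\divM(\bN\bu)=\textstyle{\sum_j} u^j\divM\bN_j+\textstyle{\sum_j}(\nabla u^j)\bP\bN_j .
\end{align*}
Because $\bP\bN=0$, each term $\bP\bN_j$ vanishes. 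Combined with \eqref{mean_curvature_vector_components}, this yields the pointwise identity
\begin{align*}
\divM(\bN\bu)=\textstyle{\sum_j}u^j\bm\kappa_j=\bu\cdot\bm\kappa .
\end{align*}
This step needs $\bu$ to be extended to $\Omega_\delta$, or the remark that the operators depend only on traces on $M$; I take that as given.

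Before integrating, I would establish $\bP\bm\kappa=0$. Apply the identity just obtained to a tangential field, $\bu=\bP\bv$. Then $\bN\bu=0$, so $\bP\bv\cdot\bm\kappa=\divM(\bN\bP\bv)=0$. Since $\bv$ is arbitrary, $\bP\bm\kappa=0$. The identity itself relies only on $\bP\bN=0$, which follows from $\bP^2=\bP$, so this reasoning is not circular.

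Consequently $\bu\cdot\bm\kappa=\bN\bu\cdot\bm\kappa$. Integrating $\divM\bu=\divM(\bP\bu)+\divM(\bN\bu)$ over $M$, and applying \eqref{generalStokes} to the first term, gives the stated formula.

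The main obstacle is the justification that \eqref{basics} is well defined for fields given only on $M$, and that \eqref{generalStokes} holds for $\divM(\bP\bu)$ defined extrinsically. In other words, one must check that the extrinsic $\divM$ agrees with the intrinsic divergence on tangential fields. I would either take this as part of the setting of \eqref{generalStokes}, or verify it locally: the tangential trace of $\nabla(\bP\bu)\bP$ equals the Levi-Civita divergence, since the normal part of the ambient derivative contributes nothing to the trace.
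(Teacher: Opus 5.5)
Your proof is correct and follows the paper's argument: you reduce everything to the pointwise identity $\divM(\bN\bu)=\bu\cdot\bm\kappa$, obtained from a product rule, $\bP\bN=0$ and $\bm\kappa_j=\divM\bN_j$, then test that identity on a tangential field to get $\bP\bm\kappa=0$, and treat $\bP\bu$ by \eqref{generalStokes}. The differences are cosmetic. You expand $\bN\bu=\sum_j u^j\bN_j$ and use the scalar-times-vector product rule where the paper uses the cyclic property of the trace, and you test with $\bP\bv$ for constant $\bv$ rather than with $\bu=\bP\bm\kappa$. Your choice is if anything slightly cleaner, since $\bm\kappa$ is only assumed continuous and so $\bP\bm\kappa$ need not be regular enough for the product-rule derivation.
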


\begin{proof} Thanks to \eqref{generalStokes}, it remains to prove from \eqref{basics} that $\divM\bN\bu=\bu\cdot \bm{\kappa}$, which also yields $\bP\bm\kappa=0$ with $\bu\equiv \bP\bm\kappa$. Indeed, recalling the  product rules of the Cartesian derivative and the cyclic property of the matrix trace, one derives
    \begin{align*}
\divM(\bN\bu) &=\tr{(\nabla{(\bN\bu)\bP})}=\tr{(\bN(\nabla\bu)\bP)}+\tr{(\textstyle{\sum_j}\bu_j(\nabla\bN_j)\bP)}=\textstyle{\sum_i}\bu_j\tr{((\nabla\bN_j)\bP)}
\end{align*}
 for any $\bx\in\Omega_{\delta}$. Since $\tr{((\nabla\bN_j)\bP)}=\divM \bN_j$, for any $1\leq{} j \leq n$, the last term is the dot product of $\bu$ with  a vector whose components were given in \eqref{mean_curvature_vector_components}.
\end{proof}
\begin{corollary} Setting $\bu=f \be_j$ in Proposition~\ref{vector_parts} for every $1\leq j \leq n$ shows that
    \begin{align*}
        \int_M \nablaM f= \int_{\pa{}M} f \bt^T + \int_{M} f \bm{\kappa}^T\,,
        \end{align*} 
 which then yields outer products in
        \begin{align*}
        \int_M \nablaM \bu= \int_{\pa{}M} \bu  \bt^T + \int_{M} f\bu \bm{\kappa}^T\,.
    \end{align*}

\end{corollary}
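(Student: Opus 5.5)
The plan is to apply Proposition~\ref{vector_parts} component by component. The second display then follows by applying the first one row by row.

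For the first identity, fix $1\leq j\leq n$ and take $\bu=f\be_j$ in Proposition~\ref{vector_parts}. By \eqref{basics}, $\nabla(f\be_j)$ is the matrix whose $j$-th row is $\nabla f$ and whose other rows vanish, so
\begin{align*}
\divM(f\be_j)=\tr\big(\be_j(\nabla f)\bP\big)=(\nabla f)\bP\be_j=(\nablaM f)_j .
\end{align*}
On the right-hand side we use the symmetry of $\bP$ and $\bN$ together with $\bP\bt=\bt$. This gives $\bP(f\be_j)\cdot\bt=f\,\be_j\cdot\bP\bt=f\,t_j$, and likewise $\bN(f\be_j)\cdot\bm\kappa=f\,\be_j\cdot\bN\bm\kappa=f\,\kappa_j$. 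Here $\bN\bm\kappa=\bm\kappa$ follows from the first assertion $\bP\bm\kappa=0$ of Proposition~\ref{vector_parts}. Collecting the $n$ scalar identities as the components of a row yields
\begin{align*}
\int_M\nablaM f=\int_{\pa M}f\,\bt^T+\int_M f\,\bm\kappa^T .
\end{align*}

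For the second identity, write $\bu=\sum_i u^i\be_i$. The $i$-th row of $\nablaM\bu=(\nabla\bu)\bP$ is exactly $\nablaM u^i$, so we apply the scalar identity with $f=u^i$ for each $i$. Stacking the resulting rows gives the column-row (outer) products
\begin{align*}
\int_M\nablaM\bu=\int_{\pa M}\bu\,\bt^T+\int_M\bu\,\bm\kappa^T .
\end{align*}
The factor $f$ in the last term of the stated formula therefore appears to be a typo; the correct term is $\int_M\bu\,\bm\kappa^T$.

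The argument is routine. The one step needing care is the normal term: it requires the normality $\bN\bm\kappa=\bm\kappa$ already supplied by Proposition~\ref{vector_parts}. Without it, the term would read $f(\bN\bm\kappa)^T$ rather than $f\bm\kappa^T$.
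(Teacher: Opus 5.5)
Your proof is correct and follows the paper's route. You apply Proposition~\ref{vector_parts} with $\bu=f\be_j$ for each $j$, using $\bP\bm\kappa=0$ so that $\bN(f\be_j)\cdot\bm\kappa=f\kappa_j$, and then apply the scalar identity row by row to the components $u^i$ of a vector field. You are also right that the stray factor $f$ in the last term is a typo: the correct term is $\int_M \bu\,\bm\kappa^T$.
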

\subsection{Fundamental theorem of calculus for one-dimensional paths}\label{sec:FTC}
In the case $n-m=1$ we denote $M\equiv \gamma$ and the projector can be expressed as $\bP=\bw\bw^T$ for a unit tangent vector field $\bw$ on $\gamma$. The boundary $\pa\gamma$  is zero-dimensional and consists of  the starting point $\ba\in\mathbb{R}^n$ with $\bw=-\bt$ and the endpoint $\bb\in\mathbb{R}^n$ with $\bw=\bt$. The operators \eqref{basics} are now expressed as follows,
\begin{align*}
  \nablaM{}f \equiv \nabla_{\!\!\gamma} f = (\nabla f)(\bw\bw^T) = 
((\nablaM f)\bw)\bw^T =(\mathcal{D}_{\bw}^\gamma f) \bw^T\,,
\\
    \divM\bu \equiv \div_{\!\!\gamma} \bu = \tr((\nabla\bu)(\bw\bw^T)) = 
\bw^T(\nabla\bu)\bw = \bw\cdot((\nabla\bu)\bw) = (\mathcal{D}_{\bw}^\gamma \bu) \cdot\bw \,,
\end{align*}
with the directional derivatives of a scalar $\mathcal{D}_{\bw}^\gamma f := (\nabla_\gamma f)\bw$ and a vector $\mathcal{D}_{\bw}^\gamma \bu:=(\nabla_\gamma \bu)\bw$ that will be generalized later in Definition~\ref{def:cartesian} to tensor fields of arbitrary rank. The extrinsic Stokes' formula \eqref{vector_parts} yields
pointwise evaluations at endpoints of $\gamma$ in the following,
\begin{align}\label{curve_vector}
\int_\gamma \mathcal{D}_{\bw}^\gamma \bu \cdot \bw =  (\bP\bu \cdot \bw)|_{\ba}^{\bb}  +\int_\gamma \bN\bu\cdot\bm{\kappa}\,,\qquad f|_{\ba}^{\bb}:= f[\bb] - f[\ba] \,.
\end{align}
Hence, the left-hand side must vanish for any tangential $\bu$ on a path $\gamma$ without boundary. Moreover, for any scalar function $f$ one can consider 
$\bu=f\bw=\bP\bu$ in \eqref{curve_vector} with $\nabla_\gamma \bu = \bw(\nabla_\gamma f) +f(\nabla_\gamma{}\bw)$ so  $\mathcal{D}_{\bw}^\gamma \bu \cdot \bw = \mathcal{D}_{\bw}^\gamma f+f\bw^T(\nabla_\gamma{}\bw)\bw$ and, consequently,
\begin{align}\label{FTC}
\int_\gamma \mathcal{D}_{\bw}^\gamma f =  f|_\ba^\bb\,,\qquad\qquad  \bw\cdot\bw=1\,,
\end{align}
since $0=\nabla_\gamma(\bw\cdot\bw)=2\bw^T(\nabla_\gamma \bw)$.

\subsection{Curl and circulation for two-dimensional surfaces}

Due to its paramount practicality, we will consider the circulation and the curl-curl structure for two-dimensional surfaces $M\equiv\Gamma$, i.e. $n-m=2$. In this case the boundary $\pa{}M\equiv\pa\Gamma$, if present, is one-dimensional and can be oriented by a vector field $\bm{\tau}=\bP\bm{\tau}$, $\bm{\tau}\cdot\bm{t}=0$, such that $\{\bm{t},\bm{\tau},\bn_1,...,\bn_m\}$ is a positively oriented  basis in $\mathbb{R}^n$.
    \begin{definition}\label{def:vector_scalar_curls}  For $n-m=2$ we define $\bu^\dagger:=0$ if  the tangential component $\bP\bu=0$, otherwise $\bu^\dagger$ is the vector $\bP\bu$ rotated in the tangent plane by $\pi/2$ so the basis $\{\bu, \bu^\dagger, \bn_1, ..., \bn_m\}$ is positively oriented in $\mathbb{R}^n$. We introduce the following operators
    \begin{align*}
 \curlG \bu := -\divM \bu^\dagger
\,,\qquad{}
\curlGscal{f} :=((\nablaM f)^T)^\dagger  \,,\quad 
 \end{align*}
    \end{definition}
Note that $\bu\cdot\bv = \bu^\dagger{}\cdot\bv^\dagger$ and $(\bu^\dagger)^\dagger=-\bu$ for a tangential $\bu=\bP\bu$ and any $\bv$.
\begin{remark} Consider $n=3$ and the plane $z=0$ with $\bn=\be_z$. For $\bu=(-y)\be_x+x\be_y$ we have $\bu^\dagger=\bn\times\bu$ and $ \curlG \bu=-\divM(\be_z\times((-y)\be_x+x\be_y))=-\divM((-y)\be_y+x(-\be_x))=2$.
\end{remark}
\begin{proposition}[circulation and curl-curl identity]\label{curl_int_parts}
For a vector field $\bu$ and a scalar field $f$ on a two-dimensional submanifold $\Gamma$ with boundary $\pa \Gamma$, we have
\begin{align*}
\curlG (f\bu) =f  \curlG \bu + \bu  \cdot \curlGscal{f} 
\,,\qquad \qquad \qquad \qquad
\int_\G\curlG \bu = \int_{\pa{}\G}\bu\cdot{}\bm{\tau}\,.
\end{align*}
\end{proposition}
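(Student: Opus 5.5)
The plan is to reduce both identities to properties of the map $\bu\mapsto\bu^\dagger$ together with the divergence product rule and the extrinsic Stokes' formula of Proposition~\ref{vector_parts}.

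First, I would record that $\dagger$ is pointwise linear. On the tangent plane it is the rotation $\bR$ by $\pi/2$ applied to $\bP\bu$, so $\bu^\dagger=\bR\bP\bu$ is a linear map (vanishing on normals). In particular $\bu^\dagger$ is tangential and $(f\bu)^\dagger=f\bu^\dagger$.

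\textbf{Product rule.} The Cartesian product rule $\nabla(f\bv)=\bv\nabla f+f\nabla\bv$, multiplied on the right by $\bP$ and traced, gives
\begin{align*}
\divM(f\bv)=f\divM\bv+\nablaM f\,\bv .
\end{align*}
Taking $\bv=\bu^\dagger$ yields
\begin{align*}
\curlG(f\bu)=-\divM(f\bu^\dagger)=f\curlG\bu-(\nablaM f)\bu^\dagger .
\end{align*}
Write $\bg=(\nablaM f)^T$, which is tangential. It then remains to check that $-\bg\cdot\bu^\dagger=\bu\cdot\bg^\dagger$. Since $\bg^\dagger$ is tangential, $\bu\cdot\bg^\dagger=\bP\bu\cdot\bg^\dagger$. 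Using the stated invariance $\bv\cdot\bw=\bv^\dagger\cdot\bw^\dagger$ for tangential vectors, together with $(\bg^\dagger)^\dagger=-\bg$,
\begin{align*}
\bP\bu\cdot\bg^\dagger=(\bP\bu)^\dagger\cdot(\bg^\dagger)^\dagger=-\bu^\dagger\cdot\bg .
\end{align*}
This proves the first identity.

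\textbf{Circulation.} Apply Proposition~\ref{vector_parts} to the vector field $\bu^\dagger$. Because $\bu^\dagger$ is tangential, $\bN\bu^\dagger=0$ and the curvature term drops, so
\begin{align*}
\int_\G\curlG\bu=-\int_{\pa\G}\bu^\dagger\cdot\bt .
\end{align*}
It remains to show the pointwise boundary identity $-\bu^\dagger\cdot\bt=\bu\cdot\bm\tau$. The orientation convention says that $\{\bt,\bm\tau,\bn_1,\dots,\bn_m\}$ is positive, and $\bt,\bm\tau$ are orthonormal and tangential. Hence $\bt^\dagger=\bm\tau$ and $\bm\tau^\dagger=-\bt$. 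Then
\begin{align*}
-\bu^\dagger\cdot\bt=-(\bP\bu)^\dagger\cdot\bt=-(\bP\bu)^{\dagger\dagger}\cdot\bt^\dagger=\bP\bu\cdot\bm\tau=\bu\cdot\bm\tau .
\end{align*}

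\textbf{Main obstacle.} The delicate step is justifying that $\dagger$ is a well-defined, linear and differentiable operation, with $\bt^\dagger=\bm\tau$, using only the verbal definition via positively oriented bases. The verbal definition applies to a general $\bu$ through $\bP\bu$, and the same-orientation check relies on $\bt$ and $\bm\tau$ being unit and orthogonal. I would handle this by writing $\bu^\dagger=\bR\bP\bu$, where $\bR$ is the $C^1$ field of rotations by $\pi/2$ in the oriented tangent plane. Regularity of $\bu^\dagger$ then follows from that of $\bP$ and $\bR$, and the sign conventions are fixed once by checking the single pair $(\bt,\bm\tau)$.
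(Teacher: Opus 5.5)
Your proof is correct and follows essentially the same route as the paper. For the circulation, you apply the extrinsic Stokes' formula to the tangential field $\bu^\dagger$ (so the curvature term drops) and rotate $\bt$ into $\bm\tau$; for the product identity, you use the scalar divergence product rule together with $\bv\cdot\bw=\bv^\dagger\cdot\bw^\dagger$ and $(\bv^\dagger)^\dagger=-\bv$. Your intermediate sign $-(\nablaM f)\bu^\dagger$ is the correct one: the paper's displayed step writes $+\bu^\dagger\cdot\nablaM f$, a slip that does not affect the final identity. Writing $\bu^\dagger=\bR\bP\bu$ also usefully makes the linearity and regularity of $\dagger$ explicit, which the paper leaves implicit.
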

\begin{proof} Proposition~\ref{vector_parts} for a two-dimensional  $M\equiv\Gamma$ yields with Definition~\ref{def:vector_scalar_curls} that
\begin{align}
\int_\G\curlG \bu=-\int_\G\divM \bu^\dagger
    &=-\int_{\pa{}\G}\bu^\dagger\cdot{}\bm{t}
 =\int_{\pa{}\G}\bu\cdot{}\bm{\tau}\,,
 \end{align}
  since both $\bm{\tau}$ and  $\bu^\dagger$ can be obtained from $\bm{t}$ and $\bu$ by rotating them by $\pi/2$ . Similarly, 
\begin{align}\label{curl_product}
  \curlG (f\bu) = -\divM(f\bu^\dagger)=-f \divM \bu^\dagger + \bu^\dagger\cdot\nablaM{}f
        =f  \curlG \bu + \bu  \cdot \curlGscal{f}   
\end{align}
 by the product identities from \cite{bouck_hydrodynamical_2024} and the properties of $\bu^\dagger$ from Definition~\ref{def:vector_scalar_curls}.
\end{proof}

\begin{corollary} For any scalar field $f$ on a two-dimensional submanifold $\Gamma$, we have\label{cor:vanishing_div_curl}
    \begin{align}
        \curlG (\nablaM f)^T = 0\,,\qquad\qquad\qquad\qquad \divM(\curlGscal{}f) = 0\,.
    \end{align}
\end{corollary}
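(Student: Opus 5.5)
Both identities follow from the definitions in Definition~\ref{def:vector_scalar_curls} together with the pointwise properties of the rotation $\dagger$: $(\bu^\dagger)^\dagger=-\bu$ for tangential $\bu$, and $\bu\cdot\bv=\bu^\dagger\cdot\bv^\dagger$. No integration by parts is needed; the plan is to compute each operator directly and reduce to symmetry of second derivatives.

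\textbf{First identity.} Since $(\nablaM f)^T=\bP(\nabla f)^T$ is tangential, we get
\[
\bigl((\nablaM f)^T\bigr)^\dagger=\curlGscal f,
\]
and therefore $\curlG(\nablaM f)^T=-\divM\curlGscal f$. So the first identity is equivalent to the second, and it suffices to prove $\divM(\curlGscal f)=0$.

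\textbf{Local frame.} To prove this I would work at a point $\bx\in\G$ with a local orthonormal tangent frame $\{\bm a,\bm b\}$, positively oriented with $\bn_1,\dots,\bn_m$, and extended smoothly into a neighborhood on $\G$. Then $\bP=\bm a\bm a^T+\bm b\bm b^T$, and the rotation acts by $\bm a^\dagger=\bm b$, $\bm b^\dagger=-\bm a$. Hence
\[
\curlGscal f=(\mathcal D_{\bm a}f)\,\bm b-(\mathcal D_{\bm b}f)\,\bm a,
\qquad \mathcal D_{\bm a}f:=(\nablaM f)\bm a .
\]
Taking $\divM$ with the product rule $\divM(g\bv)=g\divM\bv+(\nablaM g)\bv$ gives
\[
\divM(\curlGscal f)=\mathcal D_{\bm b}\mathcal D_{\bm a}f-\mathcal D_{\bm a}\mathcal D_{\bm b}f+(\mathcal D_{\bm a}f)\divM\bm b-(\mathcal D_{\bm b}f)\divM\bm a .
\]

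\textbf{Commutator.} The commutator $\mathcal D_{\bm b}\mathcal D_{\bm a}f-\mathcal D_{\bm a}\mathcal D_{\bm b}f$ equals $\nabla f\cdot[\bm b,\bm a]$, where $[\bm b,\bm a]=(\nablaM\bm a)\bm b-(\nablaM\bm b)\bm a$. This uses symmetry of the Cartesian Hessian of any $C^2$ extension of $f$ (whose value is independent of the extension). Because the bracket of tangent fields is tangent, it can be written
\[
[\bm b,\bm a]=\bigl(\bm a\cdot(\nablaM\bm a)\bm b-\bm a\cdot(\nablaM\bm b)\bm a\bigr)\bm a+\bigl(\bm b\cdot(\nablaM\bm a)\bm b-\bm b\cdot(\nablaM\bm b)\bm a\bigr)\bm b .
\]

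\textbf{Divergence terms.} Next, expand
\[
\divM\bm b=\bm a\cdot(\nablaM\bm b)\bm a+\bm b\cdot(\nablaM\bm b)\bm b,
\qquad
\divM\bm a=\bm a\cdot(\nablaM\bm a)\bm a+\bm b\cdot(\nablaM\bm a)\bm b,
\]
and simplify using the orthonormality relations: $\bm a\cdot(\nablaM\bm a)\,\cdot=0$, $\bm b\cdot(\nablaM\bm b)\,\cdot=0$, and $\bm a\cdot(\nablaM\bm b)\bm v=-\bm b\cdot(\nablaM\bm a)\bm v$.

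\textbf{Cancellation.} The remaining step is to check that the four scalar connection coefficients cancel against the divergence terms. This is the main obstacle: it is purely bookkeeping, but sign errors are easy to make.

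If that bookkeeping becomes messy, I would instead use a weak argument. For any test function $g$ vanishing on $\pa\G$, the product rule of Proposition~\ref{curl_int_parts} together with the circulation formula gives
\[
\int_\G g\,\curlG(\nablaM f)^T=-\int_\G(\nablaM f)^T\cdot\curlGscal g .
\]
The right-hand side vanishes pointwise, because $\curlGscal g$ is a rotated gradient: $\bv\cdot\bw^\dagger=-\bv^\dagger\cdot\bw$, so the integrand is antisymmetric in $f$ and $g$. The same computation applied to $\bu=g(\nablaM f)^T$ shows the integrand equals $\nablaM g\cdot\curlGscal f$, and swapping the roles of $f$ and $g$ would then give the vanishing. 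Since the result holds for arbitrary $g$, $\curlG(\nablaM f)^T=0$ follows pointwise.
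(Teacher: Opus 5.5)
Your main route is correct and differs from the paper's. The paper argues integrally. It applies the circulation formula of Proposition~\ref{curl_int_parts} to $\bu=(\nablaM f)^T$ on an arbitrarily small patch $\tilde\Gamma\subset\Gamma$. Since $\pa\tilde\Gamma$ is a closed path, $(\nablaM f)^T\cdot\bm\tau=\mathcal D^\gamma_{\bm\tau}f$ integrates to zero there by \eqref{FTC}. Localizing gives the first identity pointwise, and the second is then read off from $\curlG\bu=-\divM\bu^\dagger$. That is the same equivalence you noticed, used in the opposite direction.

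You instead work pointwise in an oriented orthonormal tangent frame and reduce everything to the symmetry of the Cartesian Hessian of an extension. The cancellation you deferred as bookkeeping does close, and cleanly. Using $\bm a\cdot(\nablaM\bm a)\,\cdot=0$ and $\bm b\cdot(\nablaM\bm b)\,\cdot=0$, your own expansions give
\begin{align*}
\bm a\cdot[\bm b,\bm a]=-\bm a\cdot(\nablaM\bm b)\bm a=-\divM\bm b\,,\qquad \bm b\cdot[\bm b,\bm a]=\bm b\cdot(\nablaM\bm a)\bm b=\divM\bm a\,.
\end{align*}
Hence $[\bm b,\bm a]=(\divM\bm a)\bm b-(\divM\bm b)\bm a$, so the commutator equals $(\mathcal D_{\bm b}f)\divM\bm a-(\mathcal D_{\bm a}f)\divM\bm b$ and cancels the divergence terms exactly. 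The relation $\bm a\cdot(\nablaM\bm b)\bm v=-\bm b\cdot(\nablaM\bm a)\bm v$ is not even needed. Write this out rather than leaving it as an ``obstacle''.

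As for what each route buys:
\begin{itemize}
\item The paper's argument stays frame-free and reuses its Stokes/FTC machinery. The price is a shrinking-patch localization, which needs continuity of the integrand and closedness of $\pa\tilde\Gamma$.
\item Yours is purely local and shows that the identity is just symmetry of second derivatives (torsion-freeness). The price is stepping outside the paper's formalism: you need a local oriented frame, a $C^2$ extension of $f$, and tangency of the Lie bracket of tangent fields. That tangency is exactly what lets you replace $\nabla f$ by $\nablaM f$ in $\nabla f\cdot[\bm b,\bm a]$.
\end{itemize}

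Discard the fallback weak argument, however, because it is wrong as stated. The integrand $(\nablaM f)^T\cdot\curlGscal g$ is antisymmetric in $(f,g)$, but antisymmetry does not make it vanish pointwise. On the plane $z=0$ in $\R^3$ with $f=x$ and $g=y$, you get $(\nablaM f)^T\cdot\curlGscal g=\be_x\cdot(\be_z\times\be_y)=-1$. Only its integral vanishes when $g|_{\pa\G}=0$, and proving that is equivalent to the corollary itself. Swapping the roles of $f$ and $g$ merely reproduces the skew relation $\int_\G g\,\curlG(\nablaM f)^T=-\int_\G f\,\curlG(\nablaM g)^T$ and yields no vanishing.
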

\begin{proof}
    Consider any arbitrarily small submanifold $\tilde{\Gamma}\subset\Gamma$ and recognize its boundary as a one-dimensional path,  $\gamma\equiv\pa\tilde{\Gamma}$, equipped with the unit tangent field $\bw\equiv\bm\tau$, see Section~\ref{sec:FTC}. The integral of Proposition~\ref{curl_int_parts} with $\bu\equiv (\nablaM f)^T$  and \eqref{FTC} show
    \begin{align}
        \int_{\tilde{\Gamma}} \curlG (\nablaM f)^T=\int_\gamma (\nablaM f)^T\cdot \bm{\tau} =  \int_\gamma \mathcal{D}_{\bm\tau}^\gamma f = 0\,,
    \end{align}
    and the first claim follows as $\tilde{\Gamma}$ is arbitrary and $\nabla_\gamma f = \bm\tau^T (\nablaM{}f)\bm{\tau}$ as $\gamma \subset \Gamma$. The remaining part follows straight from Definition~\ref{def:vector_scalar_curls}, $ \divM(\curlGscal{}f)=-\curlG (\nablaM f)^T=0$.
\end{proof}

\section{Recursive representation of tensors of general rank}\label{sec:recursive} 

We develop a practical framework for tensor fields $\bT$ of rank $(0,q)$ on $M$ and the  identification of vectors and covectors is the only use of the superscript $\cdot^T$ in this paper. Since we use the standard Cartesian basis, a multilinear map $F$ of rank $(p,q)$ can be identified with a tensor $\bT$,
\begin{align*}
F(\bu^T_1,\bu^T_2,., \bu^T_p, \bv_1,\bv_2,., \bv_q) \equiv \bT(\bu_1,\bu_2,., \bu_p, \bv_1,\bv_2,., \bv_q)\,,
\end{align*}
and any formula with $\bT$ can be reinterpreted as a formula for the corresponding $F$. Note that the matrix notation utilized in Section~\ref{sec:prelim} will not be used hereafter. 
\subsection{Recursive row representation}
We focus on multilinear maps  of type $(0,q)$ determined by all $n^q$ outputs $\bT(\bv_1,.,\bv_q)\in\mathbb{R}$ for  arbitrary arguments $\bv_1,.,\bv_q \in \mathbb{R}^n$. The definition below introduces a  representation of such maps that  is recursive in $q$ thus conducive to computations  involving embedded manifolds, as we show later, and is solely based on the notion of Cartesian dual pairing \eqref{dual_pairing}. The direct analogy is how the matrix notation relies on the table of coefficients and the row-column rule for the evaluation of a given linear map on a vector. The following definition extends this practicality to arbitrary tensor fields of type $(0,q)$ and, in principle, does not have a direct relation to the embedded submanifold $M$.
\begin{definition}[row-representation]\label{def:row-represent}
Denote scalars by $\mathbb{T}_0$. Let $\mathbb{T}_q\equiv \mathbb{T}_q^n$, $q\geq 1$, be the set of maps $\bT:(\bv_1,.,\bv_q)\rightarrow \mathbb{R}$, $\bv_k \in \mathbb{R}^n$, $1\leq k\leq q$, defined recursively by
\begin{align*}
   \bT\equiv \{\bT^*\}&\equiv\{\bT^x,\bT^y,..., \bT^z\}\,, & \qquad & \bu^T\equiv \{\bu^*\}\equiv \{\bu^x, \bu^y,...,\bu^z\}\,,
   \\
   \bT(\bv_1,.,\bv_q)&:=\{\bT^*(\bv_2,.,\bv_{q})\}(\bv_1)\,, & \qquad & \bu^T(\bv)\equiv \{\bu^*\}(\bv)=\bu\cdot\bv\,,
\end{align*}
where $\bT^x ,\bT^y,...,\bT^z\in \mathbb{T}^n_{q-1}$ are called the \textit{row-components} of $\bT$. 
\end{definition}
\begin{remark}\label{tree_analogy}
    One can think of a complete $n$-ary tree of depth $q$: the $n^q$ values of the particular map  $\bT\in\mathbb{T}_q$ on all possible combinations of  $\be_*$ are stored at the leaf nodes, and the tree structure of recursive $\bT^*$ guides the evaluation of various tensor operations via   recursively applied dot products. 
\end{remark}
Due to the linearity of $\bu\cdot\bv$ and the recursive definition, any map $\bT\in\mathbb{T}_q$ is linear,
\begin{align*}
    \bT(\bv_1,.,f\bv_k+g\bu,.,\bv_q)=f\bT(\bv_1,.,\bv_k,.,\bv_q)+g\bT(\bv_1,.,\bu,.,\bv_q)\,,\qquad \forall f, g \in \R, \qquad \forall \bu\in\mathbb{R}^n\,, 
\end{align*}
 in every argument $\bv_k$, $1\leq k\leq q$, implying for the row components of a $\bT$ that 
\begin{align}\label{star_component}\bT^*(\bv_2,.,\bv_q)=\bT(\be_*,\bv_2,.,\bv_q)\,.
\end{align}The set $\mathbb{T}_q$ is an inner product space as the next two definitions imply.

\begin{definition}[linear structure]\label{def:linear_structure}
The following operations on $\mathbb{T}_q$ are defined recursively,
\begin{align*}
    f\bT &:= \{f\bT^*\}\,,\qquad & f\bu^T &:= (f\bu)^T,
    \\
    \bT+\bS &:= \{\bT^*+\bS^*\} ,\qquad & \bu^T+\bv^T &:= (\bu+\bv)^T\,.
\end{align*}
\end{definition}

\begin{definition} \label{def:inner_product} Frobenius inner product  $\bS\odot\bT\in\mathbb{T}_{0}$ of $\bT,\bS\in\mathbb{T}_{q}$ is defined recursively by
    \begin{align*}
    \bS\odot{}\bT &:=\sum_*\bS^*\odot{}\bT^*\,,\qquad \qquad  \quad   \qquad  \quad \bu^T\odot{}\bv^T:=\bu\cdot\bv \,.
 \end{align*}
\end{definition}
\begin{remark}
    The row representation of Definition~\ref{def:row-represent} is closely connected with the decomposition into the tensor products of the  basis linear functionals, i.e. into the sum of terms like $\be^x \otimes \be^y \otimes... \otimes\be^z$ scaled with the then called \textit{components} of the tensor. Collecting terms as in $\be^x \otimes (...)$, $\be^y \otimes (...)$ ,..., $\be^z \otimes (...)$ is equivalent to finding the row components in $\bT=\{\bT^*\}$ and vice versa.
\end{remark}

\subsection{Recursive tensor operations}

We now introduce well-known tensor operations along with their row representation from Definition~\ref{def:row-represent}.
\begin{definition}\label{def:outer_product} The outer product  $\bT\otimes\bS \in \mathbb{T}_{q+s}$   of a $\bT\in \mathbb{T}_q$ with a $\bS\in \mathbb{T}_s$ is defined  by
    \begin{align*}
        (\bS\otimes\bT)(\bv_1,.,\bv_s,\bv_{s+1}, .,\bv_{s+q})=\bS(\bv_1,.,\bv_s) \bT(\bv_{s+1},.,\bv_{s+q})\,.
    \end{align*}
\end{definition}
\begin{proposition}\label{outer_represent} The row-representation of the outer products is given recursively by
    \begin{align*}
      \bS \otimes \bT&:=\{\bS^*\otimes\bT\} \,,\qquad  \bu^T \otimes \bT:=\{\bu^*\bT\} \,.
     \end{align*}
\end{proposition}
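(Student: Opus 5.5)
The plan is to check that the recursively defined right-hand sides act on arguments exactly as the outer product of Definition~\ref{def:outer_product} prescribes. By Definition~\ref{def:row-represent}, two elements of $\mathbb{T}_{q+s}$ coincide as maps precisely when they take the same value on every tuple of arguments. So it suffices to evaluate both sides on arbitrary $(\bv_1,.,\bv_{s+q})$. I will argue by induction on the rank $s$ of the left factor, keeping $\bT\in\mathbb{T}_q$ fixed and arbitrary.

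\emph{Base case $s=1$, left factor $\bu^T$.} By the recursive evaluation rule of Definition~\ref{def:row-represent},
\begin{align*}
\{\bu^*\bT\}(\bv_1,\bv_2,.,\bv_{1+q}) = \{\bu^*\bT(\bv_2,.,\bv_{1+q})\}(\bv_1) = \textstyle{\sum_*} v_1^*\,\bu^*\,\bT(\bv_2,.,\bv_{1+q}) .
\end{align*}
Here the linear structure of Definition~\ref{def:linear_structure} is used: the row-components $\bu^*\bT$ are scalar multiples of $\bT$, and their values are the scalars $\bu^*\bT(\cdots)$. The last sum equals $(\bu\cdot\bv_1)\,\bT(\bv_2,.,\bv_{1+q})=\bu^T(\bv_1)\bT(\bv_2,.,\bv_{1+q})$, which is exactly $(\bu^T\otimes\bT)(\bv_1,.,\bv_{1+q})$ by Definition~\ref{def:outer_product}.

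\emph{Inductive step.} Let $\bS=\{\bS^*\}\in\mathbb{T}_s$ with $\bS^*\in\mathbb{T}_{s-1}$. By the induction hypothesis, each $\bS^*\otimes\bT\in\mathbb{T}_{s-1+q}$ is well defined and satisfies the product rule of Definition~\ref{def:outer_product}. Then
\begin{align*}
\{\bS^*\otimes\bT\}(\bv_1,.,\bv_{s+q}) = \{(\bS^*\otimes\bT)(\bv_2,.,\bv_{s+q})\}(\bv_1) = \{\bS^*(\bv_2,.,\bv_s)\,\bT(\bv_{s+1},.,\bv_{s+q})\}(\bv_1).
\end{align*}
Pulling the common scalar factor $\bT(\bv_{s+1},.,\bv_{s+q})$ out of the dual pairing \eqref{dual_pairing} gives $\{\bS^*(\bv_2,.,\bv_s)\}(\bv_1)\,\bT(\cdots)$. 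This equals $\bS(\bv_1,.,\bv_s)\bT(\bv_{s+1},.,\bv_{s+q})$, as required.

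The only delicate point is bookkeeping rather than mathematics: one must make sure the nested row-evaluation really peels off $\bv_1$ first. One must also make sure that the case $s=1$ with $\bS=\bu^T$ is consistent with the general formula, since $\bu^*\in\mathbb{T}_0$ and $\bu^*\otimes\bT=\bu^*\bT$. Alternatively, one can verify both formulas on basis tuples $(\be_*,\dots)$ using \eqref{star_component}, which gives $(\bS\otimes\bT)(\be_*,\cdot)=\bS^*\otimes\bT$ directly by multilinearity.
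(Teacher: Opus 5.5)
Your proof is correct and takes the same route as the paper. The paper's entire proof is the remark that the claim is a direct application of Definitions~\ref{def:row-represent} and \ref{def:outer_product}, and your induction on the rank $s$ of the left factor spells that remark out: you evaluate both sides on arbitrary arguments, peel off $\bv_1$ through the recursive evaluation rule, and pull the scalar factor out of the dual pairing.
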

\begin{proof}Direct application of recursive Definitions~\ref{def:row-represent} and \ref{def:outer_product}.
\end{proof}

\begin{definition}\label{def:left_right_insertions}
The left-insertion $\bT(\bv)\in \mathbb{T}_{q-1}$ and the right-insertion  $\bT\cdot \bv\in \mathbb{T}_{q-1}$ are defined in
 \begin{align*}
    \bT(\bv)(\bv_2,.,\bv_{q})&:=\bT(\bv,\bv_2,.,\bv_{q})\,, & \qquad  & \forall \bv_k \in \mathbb{R}^n\,,\quad  2\leq k\leq q\,,
    \\
    (\bT\cdot \bv)(\bv_1,.,\bv_{q-1})&:=\bT(\bv_1,.,\bv_{q-1},\bv)\,, & \qquad & \forall \bv_k \in \mathbb{R}^n\,,\quad  1\leq k\leq q-1\,,
\end{align*}
 for a $\bT\in\mathbb{T}_q$ and a given $\bv\in \mathbb{R}^n$.
\end{definition}
The next statement   can be interpreted  using the $n$-ary tree analogy: the left insertion is a linear combination of the $n$ root's children's subtrees with components of a fixed $\bv\in \mathbb{R}^n$, while the right insertion contracts every bottom, $\mathbb{T}_1$, leaf with $\bv$.
\begin{proposition}\label{insertion_rowrep}
    The row representations of   left- and right-insertions  are given in
    \begin{align*}
    \bT(\bv) &=\textstyle{\sum_*}\bv^*\bT^*\,,
    \\
   \bT\cdot \bv &= \{\bT^*\cdot \bv\}\,,\qquad \bu^T\cdot\bv:=\bu^T(\bv)\,,
\end{align*}
and the insertions commute, $\bT(\bu)\cdot\bv  = (\bT\cdot\bv)(\bu)$.
Moreover, $\bT\cdot\bv=0$ or $\bT(\bv)=0$ holds for all $\bv$ if and only if $\bT=0$. 
\end{proposition}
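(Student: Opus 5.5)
The argument is a direct verification from Definition~\ref{def:row-represent} and \eqref{star_component}, using induction on the rank $q$ where recursion is needed. For the left insertion, write $\bv=\sum_*\bv^*\be_*$ and use linearity of $\bT$ in its first argument. Then for all $\bv_2,.,\bv_q$,
\begin{align*}
\bT(\bv)(\bv_2,.,\bv_q)=\bT(\bv,\bv_2,.,\bv_q)=\textstyle{\sum_*}\bv^*\bT(\be_*,\bv_2,.,\bv_q)=\textstyle{\sum_*}\bv^*\bT^*(\bv_2,.,\bv_q)\,,
\end{align*}
by \eqref{star_component}. The right-hand side is the evaluation of $\sum_*\bv^*\bT^*$ under the linear structure of Definition~\ref{def:linear_structure}, which gives the first formula. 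For $q=1$ this reduces to $\bu^T(\bv)=\bu\cdot\bv$.

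For the right insertion, induct on $q$. The base case $q=1$ is the convention $\bu^T\cdot\bv:=\bu^T(\bv)$, a scalar in $\mathbb{T}_0$. For $q\ge2$, Definitions~\ref{def:row-represent} and \ref{def:left_right_insertions} give
\begin{align*}
(\bT\cdot\bv)(\bv_1,.,\bv_{q-1})=\bT(\bv_1,.,\bv_{q-1},\bv)=\{\bT^*(\bv_2,.,\bv_{q-1},\bv)\}(\bv_1)=\{(\bT^*\cdot\bv)(\bv_2,.,\bv_{q-1})\}(\bv_1)\,.
\end{align*}
This is the evaluation of $\{\bT^*\cdot\bv\}$, with the recursion applied to $\bT^*\in\mathbb{T}_{q-1}$. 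For $q=2$, the inner objects $\bT^*\cdot\bv$ are scalars, so $\{\bT^*\cdot\bv\}$ is the covector with those components. The main care here is bookkeeping at this base level: a list of $n$ scalars is read as an element of $\mathbb{T}_1$.

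Commutation of the insertions follows from the definitions alone: both $\bT(\bu)\cdot\bv$ and $(\bT\cdot\bv)(\bu)$ evaluated at $(\bv_2,.,\bv_{q-1})$ equal $\bT(\bu,\bv_2,.,\bv_{q-1},\bv)$. (For $q=1$ both sides are $\bT(\bu)\bv$-type degenerate cases, handled by the conventions.)

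For the last claim, the "if" direction is immediate by linearity. For "only if", suppose $\bT(\bv)=0$ for all $\bv$. Then for all arguments $\bT(\bv_1,\bv_2,.,\bv_q)=\bT(\bv_1)(\bv_2,.,\bv_q)=0$, so $\bT=0$ as a map. Equivalently, each row component vanishes, since $\bT^*=\bT(\be_*)$ by \eqref{star_component}. Likewise, if $\bT\cdot\bv=0$ for all $\bv$, then $\bT(\bv_1,.,\bv_{q-1},\bv)=(\bT\cdot\bv)(\bv_1,.,\bv_{q-1})=0$ for all arguments, so $\bT=0$. Nothing here is a real obstacle; only the scalar/covector conventions at the bottom level of the recursion need stating carefully.
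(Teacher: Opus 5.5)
Your proof is correct. For the two representation formulas you do what the paper does: you unwind Definitions~\ref{def:row-represent} and \ref{def:left_right_insertions}, using \eqref{star_component} for the left insertion.

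You differ from the paper on the last two claims. You argue at the level of multilinear maps. You note that $\bT(\bu)\cdot\bv$ and $(\bT\cdot\bv)(\bu)$ both evaluate to $\bT(\bu,\bv_2,.,\bv_{q-1},\bv)$, and that if all insertions vanish then every value of $\bT$ vanishes. The paper instead stays inside the row representation. It derives commutativity from the formulas just established, $\bT(\bu)\cdot\bv=(\textstyle{\sum_*}\bu^*\bT^*)\cdot\bv=\textstyle{\sum_*}\bu^*(\bT^*\cdot\bv)=(\bT\cdot\bv)(\bu)$. For the vanishing claim it reads off $\bT^*=\bT(\be_*)=0$ in the left case. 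In the right case it recurses down to the rank-one leaves $\bu^T$ with $\bu^T\cdot\bv=0$ for all $\bv$.

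Your route is shorter and needs no recursion, because elements of $\mathbb{T}_q$ are by definition maps and equality is pointwise. The paper's route keeps the argument within the tree structure it is promoting, at the cost of a descent to the leaves for the right-insertion case.

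One small point: your parenthetical about $q=1$ in the commutation claim is unnecessary and slightly misleading. For $q=1$, $\bT(\bu)$ is a scalar and right insertion into it is undefined, so the commutation statement only has content for $q\geq 2$. The paper's argument implicitly assumes this as well.
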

\begin{proof}
     Definitions~\ref{def:row-represent} and \ref{def:left_right_insertions} yield the first part of the claim and commutativity,
     \begin{align}
    \bT(\bu)\cdot\bv =  (\textstyle{\sum_*}\bu^*\bT^*)\cdot\bv = \textstyle{\sum_*}\bu^*(\bT^*\cdot\bv)=\textstyle{\sum_*}\bu^*(\bT\cdot\bv)^*
    = (\bT\cdot\bv)(\bu)\,.
\end{align}
Taking $\bv=\be_*$ shows $0=\bT(\be_*)=\bT^*$ and $\bT=0$. Similarly, $0=\bT\cdot\bv=\{\bT^*\cdot \bv\}$ implies $\bT^*\cdot \bv=0$, for all $\bv$,  which recursively yields $\bu^T\cdot\bv=0$, for all $\bv$, for every row component $\bu^T\in \mathbb{T}_1$ of $\bT$.   Thus, all $\bu^T=0$ and,  consequently, $\bT=0$.
\end{proof}
\begin{remark}
    Any rank 2 tensor  $\bT$  can be interpreted as a linear transformation $A:\mathbb{R}^n\rightarrow \mathbb{R}^n$ such that $\bu\cdot A(\bv)=\bT(\bu,\bv)$, for all $\bu,\bv\in \mathbb{R}^n$, with the standard matrix $A_{ij}$, $1\leq i,j\leq n$. The row component $\bT^*$ are the rows of $A_{ij}$, the right-insertion  $\bT\cdot\bv$ corresponds to the matrix-column product with $A_{ij}$ while the left-insertion $\bT(\bu)$  corresponds to the row-matrix product with $A_{ij}$  justifying their names.
\end{remark}

Next we interpret the left (right) insertion with a vector $\bu$ as the left (right) contractions with the covector $\bu^T \in \mathbb{T}_1$, i.e. 
\begin{align}\label{vector-covector_insert}(\bu^T :\bT) \,:=\,\bT(\bu)\,,\quad (\bT:\bu^T) \,:=\,\bT\cdot\bu\,,
\end{align}
and generalize them to contractions with general tensors of equal or lower rank.
 \begin{definition}\label{def:left_right_contraction} Left-contraction  $\bS:\bT\in\mathbb{T}_{q-s}$ and right-contraction $\bT\odotcolon\bS\in\mathbb{T}_{q-s}$ of $\bT\in\mathbb{T}_{q}$   and $\bS\in\mathbb{T}_{s}$, $q\geq s$,  are defined recursively by
    \begin{align*}
     \bS:\bT \,&:=\, \textstyle{\sum_*}\bS^*:\bT^*\,,\qquad  f:\bT=f\bT\,,
     \\
     \bT\odotcolon \bS\,&:=\,\{\bT^*\odotcolon\bS\}\quad\mathrm{for}\quad q>s,\qquad  {\bT}\odotcolon\bS\,:=\,{\bT}\odot{}\bS\quad\mathrm{for}\quad q=s\,.
 \end{align*}
\end{definition}
Note that if $q=s$ then the left and the right contractions coincide with $\bS\odot\bT=\bT\odot\bS$ but if the ranks are different then only one order of arguments is well-defined allowing to overload the notation and use it for both contractions. 
\begin{proposition}\label{associativity} For $\bS\in\mathbb{T}_{s}$, $\bT\in\mathbb{T}_{q}$, $q>s$, we have for any $\bv \in\mathbb{R}^n$ that
    \begin{align}
        (\bS:\bT)\cdot \bv = \bS:(\bT\cdot \bv)\,.
    \end{align}
\end{proposition}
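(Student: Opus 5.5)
We prove the identity by induction on the rank $s$ of $\bS$, keeping $q>s$ arbitrary at each stage. The recursive definition of the left contraction in Definition~\ref{def:left_right_contraction} peels off one slot of $\bS$ and one slot of $\bT$ at a time. The recursive row representation of the right insertion in Proposition~\ref{insertion_rowrep} acts on the bottom leaves only. So the two operations touch disjoint ends of the tree of Remark~\ref{tree_analogy}, and we expect them to commute.

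\emph{Base case $s=0$.} Here $\bS=f$ is a scalar and $f:\bT=f\bT$. We must show $(f\bT)\cdot\bv=f(\bT\cdot\bv)$. This follows from Definition~\ref{def:linear_structure} together with the representation $\bT\cdot\bv=\{\bT^*\cdot\bv\}$, by a short induction on $q$. At the bottom level we use $(f\bu)^T\cdot\bv=f\bu\cdot\bv$. Equivalently, it follows from linearity of $\bT$ as a map together with Definition~\ref{def:left_right_insertions}.

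\emph{Inductive step, $s\geq1$.} Write $\bS=\{\bS^*\}$ and $\bT=\{\bT^*\}$, where $\bS^*\in\mathbb{T}_{s-1}$ and $\bT^*\in\mathbb{T}_{q-1}$ with $q-1>s-1$. Then
\begin{align*}
(\bS:\bT)\cdot\bv=\Big(\textstyle{\sum_*}\bS^*:\bT^*\Big)\cdot\bv=\textstyle{\sum_*}(\bS^*:\bT^*)\cdot\bv=\textstyle{\sum_*}\bS^*:(\bT^*\cdot\bv)=\bS:\{\bT^*\cdot\bv\}=\bS:(\bT\cdot\bv).
\end{align*}
The steps are justified as follows. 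The second equality uses additivity of the right insertion, which follows from Definition~\ref{def:linear_structure} and the row representation of Proposition~\ref{insertion_rowrep}, or directly from Definition~\ref{def:left_right_insertions}. The third equality is the induction hypothesis applied to the pair $(\bS^*,\bT^*)$. The fourth is Definition~\ref{def:left_right_contraction} again, since the row components of $\bT\cdot\bv$ are $\bT^*\cdot\bv$ by Proposition~\ref{insertion_rowrep}.

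The only delicate point is the edge case $s=1$ in the vector–covector form, where $\bS=\bu^T$ has scalar components $\bu^*$ and $\bS^*:\bT^*$ reduces to $\bu^*\bT^*$. This case is handled either by the $s=0$ base case or directly: by \eqref{vector-covector_insert} and Proposition~\ref{insertion_rowrep}, $(\bu^T:\bT)\cdot\bv=\bT(\bu)\cdot\bv=(\bT\cdot\bv)(\bu)=\bu^T:(\bT\cdot\bv)$.

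The main obstacle is therefore only bookkeeping: checking that the row components of $\bT\cdot\bv$ are exactly $\bT^*\cdot\bv$ when $q-1\geq1$, and that the rank condition $q>s$ is preserved in the recursion. Both follow directly from the earlier definitions.
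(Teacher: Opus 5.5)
Your proof is correct and matches the paper's: both induct on $s$ with the identical step $(\bS:\bT)\cdot\bv=\sum_*(\bS^*:\bT^*)\cdot\bv=\sum_*\bS^*:(\bT^*\cdot\bv)=\bS:(\bT\cdot\bv)$. The only difference is that you start at $s=0$ (linearity in a scalar factor), whereas the paper takes $s=1$ as its base via the commutativity $\bT(\bu)\cdot\bv=(\bT\cdot\bv)(\bu)$ of Proposition~\ref{insertion_rowrep} — the same argument you give as your direct treatment of $s=1$.
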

\begin{proof} Proposition~\ref{insertion_rowrep} and \eqref{vector-covector_insert} provide with the base of induction over $s$ for any fixed $q$. Also,
     \begin{align*}
        (\bS:\bT)\cdot\bv =\sum_* (\bS^*:\bT^*)\cdot\bv=\sum_*\bS^{*}:(\bT^*\cdot \bv)=\sum_*\bS^{*}:(\bT\cdot \bv)^* = \bS:(\bT\cdot\bv)\,,
    \end{align*}
where we utilized the induction step in the second equality followed by Proposition~\ref{insertion_rowrep}.
    \end{proof}

\subsection{Tangent tensors and  the tangential projection }

Consider a fixed point $\bx\in M$ with projections   \eqref{projectors}  that can now be represented via Definition~\ref{def:row-represent} and  \eqref{mean_curvature_vector_components} as follows,
\begin{align}\label{projection-row-represent}
    \bI=\{\be^*\}\,,\qquad \bN=\{\bN_*^T\}=\sum_{i=1}^m\bn_i^T\otimes\bn_i^T\,,\qquad \bP=\{\bP_*^T\}=\bI-\bN=\{\be^*-\sum_{i=1}^m\bn_i^*\bn_i^T\}\,.
\end{align}
Similarly to the projection $\bP\bu$ of a vector at $\bx\in{}M$ onto the tangent plane, we can define pointwise the dual projection  $\mathbb{P}:\mathbb{T}_1\rightarrow \mathbb{T}_1$ of linear functionals $\bu^T$ by  its action $(\mathbb{P}\bu^T)(\bv):=\bu^T(\bP\bv)$ on every $\bv\in{}\mathbb{R}^n$. The  functionals satisfying $\mathbb{P}\bu^T=\bu^T$ form the  subspace $\mathbb{P}\mathbb{T}_1\subset \mathbb{T}_1$ that we call \textit{tangent}. We now generalize these concepts to tensors of arbitrary rank.
\begin{definition}\label{def:tangency}
   Let $\mathbb{PT}_0:=\mathbb{T}_0$. The tangent subspace  $\mathbb{PT}_q\subset \mathbb{T}_q$  and the tangential projection $\mathbb{P}:\mathbb{T}_q\rightarrow \mathbb{PT}_q$, $q\geq 1$, are defined as follows
\begin{align*}
    &\mathbb{PT}_q=\{\bT\in\mathbb{T}_q:\quad \bT(\bv_1,., \bN\bv_k,.,\bv_q)=0\,, \quad   1\leq \forall k\leq q\,,\,\,\forall \bv_1,.,\bv_q\in\mathbb{R}^n\}\,,
    \\
    &(\mathbb{P}\bT)(\bv_1,.,\bv_q):=\bT(\bP\bv_1,.,\bP\bv_q)\,,\qquad \forall \bv_1,.,\bv_q\in\mathbb{R}^n\,.
\end{align*}
\end{definition}
Note that the defining property of $\mathbb{PT}_q$  is equivalent to $\bT(\bv_1,., \bv_k,.,\bv_q)=\bT(\bv_1,., \bP\bv_k,.,\bv_q)$.
\begin{lemma}\label{lemma:t_n}
    If $\bT(\bv_1,., \bv_k,.,\bv_q)=\bT(\bv_1,., \bP\bv_k,.,\bv_q)$ for all $\bv_1,.,\bv_q\in\mathbb{R}^n$ for all $k$ except for $k=1$ then 
    \begin{align}\label{t_n}
       \bT_n:=\bT-\sum_{i=1}^m\bn_i^T\otimes\bT(\bn_i) \,\, \in \,\, \mathbb{PT}_q\,.
    \end{align}
\end{lemma}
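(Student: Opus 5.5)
We need to show that $\bT_n$ annihilates $\bN\bv_k$ in every slot $k$. The plan is to evaluate $\bT_n$ directly on arguments, using Definition~\ref{def:outer_product} and Definition~\ref{def:left_right_insertions}:
\begin{align*}
\bT_n(\bv_1,.,\bv_q)=\bT(\bv_1,.,\bv_q)-\sum_{i=1}^m(\bn_i\cdot\bv_1)\,\bT(\bn_i,\bv_2,.,\bv_q)\,.
\end{align*}
By linearity of $\bT$ in the first slot, and since $\sum_i(\bn_i\cdot\bv_1)\bn_i=\bN\bv_1$ by \eqref{projectors}, the subtracted sum equals $\bT(\bN\bv_1,\bv_2,.,\bv_q)$. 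Hence
\begin{align*}
\bT_n(\bv_1,.,\bv_q)=\bT(\bv_1-\bN\bv_1,\bv_2,.,\bv_q)=\bT(\bP\bv_1,\bv_2,.,\bv_q)\,,
\end{align*}
which is a clean closed form for $\bT_n$.

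Next, check the defining condition of $\mathbb{PT}_q$ from Definition~\ref{def:tangency} slot by slot, i.e.\ show $\bT_n(\bv_1,.,\bN\bv_k,.,\bv_q)=0$ for every $k$.

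For $k=1$: substituting $\bN\bv_1$ gives $\bT(\bP\bN\bv_1,.)$. This vanishes because $\bP\bN=0$, which follows from $\bn_i\cdot\bn_j=\delta_{ij}$. That orthonormality is implicit in $\bN$ being a projection, and I would note it as the standing assumption that $\bN$ is the orthogonal projector onto the normal space.

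For $k\geq2$: the hypothesis $\bT(.,\bv_k,.)=\bT(.,\bP\bv_k,.)$ holds for all arguments, in particular with the first argument replaced by $\bP\bv_1$. This gives $\bT(\bP\bv_1,.,\bN\bv_k,.)=\bT(\bP\bv_1,.,\bP\bN\bv_k,.)=0$.

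The main point requiring care is the identity $\sum_i \bn_i(\bn_i\cdot\bv)=\bN\bv$ together with $\bP\bN=0$. These rely on the $\bn_i$ being orthonormal, which is the reading under which \eqref{projectors} defines projectors, so I would state that explicitly. Everything else is direct from multilinearity.
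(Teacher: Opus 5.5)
Your proof is correct and follows the paper's argument. Both rest on $\bT_n(\bv_1)=\bT(\bP\bv_1)$ together with $\bP\bN=0$ for the first slot, and on the hypothesis for slots $k\geq 2$; you read the latter off the closed form, whereas the paper checks $\bT$ and the correction term $\sum_i\bn_i^T\otimes\bT(\bn_i)$ separately.

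One correction to your closing remark: $\sum_i(\bn_i\cdot\bv)\bn_i=\bN\bv$ is just the definition of $\bN$. Only $\bP\bN=0$ needs the $\bn_i$ to be orthonormal, and the paper also uses that assumption implicitly.
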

\begin{proof} Under the condition of the statement, we only need to show for $\bT_n$ that 
\begin{align}
    \bT_n(\bP\bv_1, \bv_2,.,\bv_q)=\bT_n(\bv_1,\bv_2,.,\bv_q)\,,\quad \forall \bv_1, ..., \bv_q\in\mathbb{R}^n\,,
\end{align}
as $\bT-\bT_n$ along with $\bT$ satisfies Definition~\ref{def:tangency}  for every $2\leq k \leq q$ since for all $1\leq i \leq m$,
 \begin{align*}         (\bn_i^T\otimes\bT(\bn_i))(\bv_1,.,\bv_k,.,\bv_q)= (\bn_i^T(\bv_1))\bT(\bn_i,\bv_2,.,\bv_k,.,\bv_q)=(\bn^T_i\otimes\bT(\bn_i))(\bv_1,.,\bP\bv_k,.,\bv_q)\,,
    \end{align*}
    where we recall Definition~\ref{def:left_right_insertions} and Proposition~\ref{outer_represent}. Similarly, by construction of $\bT_n$, we compute
    \begin{align}\label{t_n=t}
    \bT_n(\bv_1)=\bT(\bv_1)-\sum_{i=1}^m(\bn^T_i(\bv_1))\bT(\bn_i)=\bT(\bv_1-\sum_{i=1}^m(\bn^T_i\bv_1)\bn_i)=\bT(\bP\bv_1)\,,
    \end{align}
   for any $\bv_1$, implying $
       \bT_n(\bN\bv_1,...,\bv_q)=(\bT_n(\bN\bv_1))(\bv_2,...,\bv_q)=(\bT(\bP\bN\bv_1))(\bv_2,...,\bv_q)=0$.
\end{proof}

\begin{proposition}[recursive algorithm]\label{recursive_algo}
     $\bT\in\mathbb{PT}_q$ if and only if
\begin{align}
     \bT^*\in \mathbb{PT}_{q-1}\,,\qquad \bT(\bn_i)=0\,,\quad 1\leq \forall i\leq m\,.
\end{align}
Also, the projection $\mathbb{P}\bT$ of a $\bT\in \mathbb{T}_q$ can be computed  by the recursive, two step procedure:
\begin{enumerate}
    \item  $\tilde{\bT}:=\{\mathbb{P}\bT^*\}\in \mathbb{T}_{q}$ and $\mathbb{P}f=f$\,,
    \item $\mathbb{P}\bT=\tilde{\bT}_n\in \mathbb{PT}_{q}$\,,
\end{enumerate}
with $\tilde{\bT}_n$  given by \eqref{t_n} with $\bT\equiv\tilde{\bT}$.
 \end{proposition}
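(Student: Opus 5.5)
The proof splits into two parts: the characterization of $\mathbb{PT}_q$, then the correctness of the two-step procedure. Both rest on \eqref{star_component}, $\bT^*(\bv_2,.,\bv_q)=\bT(\be_*,\bv_2,.,\bv_q)$, which turns statements about slots $2,\dots,q$ of $\bT$ into statements about the row components.

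\emph{Characterization.} First assume $\bT\in\mathbb{PT}_q$.
\begin{itemize}
\item For $k\ge 2$, the condition $\bT(\bv_1,.,\bN\bv_k,.,\bv_q)=0$ with $\bv_1=\be_*$ gives $\bT^*(\bv_2,.,\bN\bv_k,.,\bv_q)=0$, hence $\bT^*\in\mathbb{PT}_{q-1}$.
\item Since $\bN\bn_i=\bn_i$, the case $k=1$ gives $\bT(\bn_i)=\bT(\bN\bn_i)=0$ as an element of $\mathbb{T}_{q-1}$.
\end{itemize}
Conversely, assume $\bT^*\in\mathbb{PT}_{q-1}$ and $\bT(\bn_i)=0$ for all $i$.
\begin{itemize}
\item For $k\ge2$, Proposition~\ref{insertion_rowrep} gives $\bT(\bv_1,\dots)=\sum_*\bv_1^*\,\bT^*(\bv_2,\dots)$, so tangency of each $\bT^*$ in its slot $k-1$ transfers to slot $k$ of $\bT$.
\item For $k=1$, $\bN\bv_1=\sum_i(\bn_i\cdot\bv_1)\bn_i$, so linearity gives $\bT(\bN\bv_1)=\sum_i(\bn_i\cdot\bv_1)\bT(\bn_i)=0$.
\end{itemize}
The base case $q=1$ follows from the convention $\mathbb{PT}_0=\mathbb{T}_0$ and the scalar condition $\bu\cdot\bn_i=0$.

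\emph{Projection algorithm.} I argue by induction on $q$, with $\mathbb{P}f=f$ as the base. By the induction hypothesis, $\mathbb{P}\bT^*$ is the genuine projection, so
\[
\tilde\bT(\bv_1,\bv_2,.,\bv_q)=\sum_*\bv_1^*\,\bT^*(\bP\bv_2,.,\bP\bv_q)=\bT(\bv_1,\bP\bv_2,.,\bP\bv_q).
\]
Because $\bP$ is idempotent, $\tilde\bT$ satisfies $\tilde\bT(\dots,\bv_k,\dots)=\tilde\bT(\dots,\bP\bv_k,\dots)$ for every $k\ge2$. This is exactly the hypothesis of Lemma~\ref{lemma:t_n}, so $\tilde\bT_n\in\mathbb{PT}_q$. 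Equation \eqref{t_n=t} in that lemma's proof gives $\tilde\bT_n(\bv_1)=\tilde\bT(\bP\bv_1)$, and therefore
\[
\tilde\bT_n(\bv_1,.,\bv_q)=\bT(\bP\bv_1,\bP\bv_2,.,\bP\bv_q)=(\mathbb{P}\bT)(\bv_1,.,\bv_q).
\]
Proposition~\ref{insertion_rowrep} shows that a tensor is determined by its values, which gives the identity $\mathbb{P}\bT=\tilde\bT_n$.

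The main obstacle is bookkeeping rather than a hard estimate. The point needing care is identifying $\tilde\bT=\{\mathbb{P}\bT^*\}$ with $\bT(\bv_1,\bP\bv_2,\dots)$, which requires using \eqref{star_component} and the left-insertion formula consistently. After that, everything reduces to Lemma~\ref{lemma:t_n}.
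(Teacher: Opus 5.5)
Your proof is correct and follows the paper's argument essentially step for step. The characterization uses the same slot-by-slot reasoning via \eqref{star_component} and linearity in $\bv_1$, and the projection algorithm is reduced in the same way to Lemma~\ref{lemma:t_n} together with \eqref{t_n=t}. The only cosmetic difference is how you check the lemma's hypothesis: you compute $\tilde\bT(\bv_1,\bv_2,\dots,\bv_q)=\bT(\bv_1,\bP\bv_2,\dots,\bP\bv_q)$ directly under an explicit induction on $q$, whereas the paper infers it from $\tilde\bT^*=\mathbb{P}\bT^*\in\mathbb{PT}_{q-1}$.
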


\begin{proof} Assume $\bT\in\mathbb{PT}_q$ and observe for every  $1\leq i\leq m$ and for all $\bv_2$,..., $\bv_q$ that 
\begin{align*}
    \bT(\bn_i)(\bv_2,.,\bv_q)=\bT(\bn_i,\bv_2,.,\bv_q)=\bT(\bP\bn_i,\bv_2,.,\bv_q)=0\,,\quad 
\end{align*}
 and, consequently, $\bT(\bn_i)=0$. To demonstrate $\bT^*\in\mathbb{PT}_{q-1}$ one shows that for every $2\leq{}k\leq{}q$ and each $1\leq{}i\leq m$,
\begin{align*}
    \bT^*(\bv_2,.,\bv_{k-1},\bn_i,.,\bv_q)=\bT(\be^*)(\bv_2,.,\bv_{k-1},\bn_i,.,\bv_q)=\bT(\be^*,\bv_2,.,\bv_{k-1},\bn_i,.,\bv_q)=0\,.
\end{align*}
On the other hand, if $\bT\in\mathbb{T}_q$ satisfies  $\bT(\bn_i)=0$, for each $1\leq i\leq m$, then $\bT(\bN\bv_1)=0$ as
\begin{align*}
     \bT(\bn_i,\bv_2,.,\bv_q)&=\bT(\bn_i)(\bv_2,.,\bv_q)=0\,,
\end{align*}
 for all $\bv_1$. Since $\bT(\bv_1)=\sum_*\bv_1^*\bT^*\in\mathbb{PT}_{q-1}$ as  a linear combination of $\bT^*\in\mathbb{PT}_{q-1}$, we have 
\begin{align*}
    \bT(\bv_1,.,\bv_{k-1},\bn_i,.,\bv_q)= \bT(\bv_1)(\bv_2,.,\bv_{k-1},\bn_i,.,\bv_q)= \bT(\bv_1)(\bv_2,.,\bv_{k-1},\bP\bn_i,.,\bv_q)=0\,,
\end{align*}
 for any $2\leq{}k\leq{}q$. Hence, $\bT\in\mathbb{PT}_q$. 
The second claim regarding the projection $\mathbb{P}$ follows as, by the recursive construction, $\tilde{\bT}^*=\mathbb{P}\bT^*\in \mathbb{PT}_{q-1}$ satisfy the conditions of Lemma~\ref{lemma:t_n} so the final result of the procedure is tangent, $\tilde{\bT}_n\in \mathbb{PT}_{q}$, and 
\begin{align*}
    \tilde{\bT}_n(\bv_1,\bv_2,.,\bv_q)=\tilde{\bT}_n(\bP\bv_1,\bv_2,.,\bv_q)=\tilde{\bT}_n(\bP\bv_1)(\bv_2,.,\bv_q)
   =\tilde{\bT}(\bP\bv_1)(\bv_2,.,\bv_q)=\\ =(\textstyle{\sum_*}(\bP\bv_1)^*\tilde{\bT}^*)(\bv_2,.,\bv_q) 
   = \sum_*(\bP\bv_1)^*{\bT}^*(\bP\bv_2,.,\bP\bv_q)=\bT(\bP\bv_1,\bP\bv_2,.,\bP\bv_q)\,,
\end{align*}
where we used \eqref{t_n=t} in the first line and $\tilde{\bT}_n=\mathbb{P}\bT$ per Definition~\ref{def:tangency} as claimed. 
\end{proof}

\begin{proposition}\label{projection_with_inner}
    For any $\bT\in\mathbb{T}_q$ and any $\bS\in \mathbb{PT}_q$ we have 
    \begin{align}
    \bS\odot\bT=\bS\odot\mathbb{P}\bT\,.
    \end{align}
\end{proposition}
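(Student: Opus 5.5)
We argue by induction on the rank $q$, using the recursive algorithm of Proposition~\ref{recursive_algo} to compute $\mathbb{P}\bT$ and the recursive Definition~\ref{def:inner_product} of $\odot$. The case $q=0$ is trivial since $\mathbb{P}f=f$. For the base case $q=1$, write $\bS=\bs^T$ with $\mathbb{P}\bs^T=\bs^T$, which means $\bs\cdot\bv=\bs\cdot\bP\bv$ for all $\bv$, i.e. $\bN\bs=0$. Then
\begin{align*}
\bS\odot\mathbb{P}\bT=\bs\cdot\bP\bu=\bP\bs\cdot\bu=\bs\cdot\bu=\bS\odot\bT
\end{align*}
by the symmetry of $\bP$ (equivalently, $\bs\cdot\bn_i=0$).

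For the induction step, take $q\ge 2$ and assume the claim for rank $q-1$. Let $\tilde\bT=\{\mathbb{P}\bT^*\}$ as in Proposition~\ref{recursive_algo}. Since $\bS\in\mathbb{PT}_q$, that same proposition gives $\bS^*\in\mathbb{PT}_{q-1}$. The induction hypothesis then yields
\begin{align*}
\bS\odot\bT=\textstyle{\sum_*}\bS^*\odot\bT^*=\textstyle{\sum_*}\bS^*\odot\mathbb{P}\bT^*=\bS\odot\tilde\bT .
\end{align*}
It remains to show $\bS\odot\tilde\bT=\bS\odot\tilde\bT_n$, where $\tilde\bT_n=\tilde\bT-\sum_{i}\bn_i^T\otimes\tilde\bT(\bn_i)$. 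By bilinearity of $\odot$ (from Definitions~\ref{def:linear_structure} and \ref{def:inner_product}), it suffices to prove $\bS\odot(\bn_i^T\otimes\bR)=0$ for each $i$, with $\bR=\tilde\bT(\bn_i)\in\mathbb{T}_{q-1}$.

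This last identity is the only step requiring care. By Proposition~\ref{outer_represent}, $\bn_i^T\otimes\bR=\{\bn_i^*\bR\}$. Hence
\begin{align*}
\bS\odot(\bn_i^T\otimes\bR)=\textstyle{\sum_*}\bS^*\odot(\bn_i^*\bR)=\big(\textstyle{\sum_*}\bn_i^*\bS^*\big)\odot\bR=\bS(\bn_i)\odot\bR ,
\end{align*}
where we use linearity of $\odot$ in its first argument (immediate by recursion) and the left-insertion formula $\bS(\bv)=\sum_*\bv^*\bS^*$ from Proposition~\ref{insertion_rowrep}. Since $\bS\in\mathbb{PT}_q$, Proposition~\ref{recursive_algo} gives $\bS(\bn_i)=0$, so the term vanishes.

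Chaining the equalities gives $\bS\odot\bT=\bS\odot\tilde\bT=\bS\odot\tilde\bT_n=\bS\odot\mathbb{P}\bT$, which completes the induction. The only auxiliary facts to verify along the way are the linearity properties of $\odot$, both of which follow directly from the recursive definitions.
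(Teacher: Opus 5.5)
Your proof is correct and takes essentially the same route as the paper. Both argue by induction on $q$: the base case comes from the symmetry of $\bP$. The induction step uses the two-step algorithm of Proposition~\ref{recursive_algo}, where the correction terms $\bn_i^T\otimes\tilde{\bT}(\bn_i)$ drop out because $\sum_*\bn_i^*\bS^*=\bS(\bn_i)=0$.
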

\begin{proof}
    For the base of induction $q=1$ one computes with a $\bu^T=\mathbb{P}\bu^T$ the following,
\begin{align}
    \bu^T\odot \bv^T=\bu^T(\bP\bv)=\bu\cdot(\bP\bv)=\bv^T(\bP\bu)= \mathbb{P}\bv^T(\bu)=  \bu^T \odot \mathbb{P}\bv^T\,.
\end{align}
Proposition~\ref{recursive_algo} yields the induction step, with the notation therein,
\begin{align}
\bS\odot\bT=\textstyle{\sum_*}\bS^*\odot\bT^* = \textstyle{\sum_*}\bS^*\odot\mathbb{P}(\bT^*)  = \textstyle{\sum_*} \bS^*\odot(\tilde{\bT})^*
=\textstyle{\sum_*} \bS^*\odot(\mathbb{P}\bT)^*=\bS\odot\mathbb{P}\bT\,,
\end{align}
because $(\mathbb{P}\bT-\tilde{\bT})^*=\sum_{i=1}^m\bn_i^*\otimes\tilde{\bT}(\bn_i)$ and, for each $1\leq i \leq m$,
\begin{align*}
    \textstyle{\sum_*}\bS^*\odot (\bn_i^*\tilde{\bT}(\bn_i))=(\textstyle{\sum_*}\bn_i^*\bS^*)\odot \tilde{\bT}(\bn_i)=\bS(\bn_i)\odot \tilde{\bT}(\bn)=0\,,
\end{align*}
since $\bS(\bn_i)=\bS(\bP\bn_i)=0$ for any tangential $\bS\in \mathbb{PT}_q$.
\end{proof}
 
\section{Tensor calculus on embedded submanifolds}\label{sec:calculus} 

We may attach an instance  of the  space of multilinear maps  from Definition~\ref{def:row-represent}  to each point $\bx \in M$ and denote it as $\mathbb{T}_q[\bx]$, $q\geq 0$. 
We identify these $\mathbb{T}_q[\bx]$ with a single $\mathbb{T}_q$ as they would take input arguments from the same domain $\mathbb{R}^n$.  This identification is not unique and the situation is analogous to the choice of parallel transport or of the connection in Riemannian geometry. Applications of this framework to continuum modeling may warrant different identifications, for example, involving tangent and normal to $M$ subspaces. However, as we will see, the forthcoming definitions of the corresponding derivation $\nablaM$ is directly connected with an extension of the extrinsic Stokes' formula of Proposition~\ref{vector_parts} to \textit{tensor fields} $\bT\in \mathbb{T}_q(M)$ of general rank $q$ defined as follows,
\begin{align}
    \bT\in \mathbb{T}_q(M) \qquad\Leftrightarrow \qquad  \bT[\bx]\in \mathbb{T}_q\,,\quad \forall \bx \in M\,.
\end{align}
In other words, an element  of the set $\mathbb{T}_q(M)$  assigns a map $\bT[\bx]\in  \mathbb{T}_q$ from Definition~\ref{def:row-represent} to every $\bx\in M$;  however,  the point specification $[\bx]$ in $\bT[\bx]$ will often be omitted.

\subsection{Differentiation and product rules} 
We start with the notion of parallel transport of tensor fields along one-dimensional embedded paths $M\equiv \gamma$, $n-m=1$, which we distinguish from curves that are parameterized. Note that we understand the limit below in the sense of the common space $\mathbb{T}_q$.
\begin{definition}[Cartesian parallel transport along a path] \label{def:cartesian}Consider a path $\gamma\subset\mathbb{R}^n$, a point $\bx\in \gamma$ and a vector $\bw$ tangent to $\gamma$ at $\bx$. Define $\mathcal{D}^\gamma_\bw\bT\in\mathbb{T}_q(\gamma)$ for $\bT\in\mathbb{T}_q(\gamma)$  by
     \begin{align*}
    \mathcal{D}^\gamma_\bw \bT [\bx] := \lim_{t\rightarrow{}0+}(\bT[\gamma(t)]-\bT[\bx])/t \,,
\end{align*}
where $\gamma(t)$ is any parametrization of $\gamma$ by $t\in I\subset \mathbb{R}$, $0\in  I$, such that $\gamma(0)=\bx$, $\pa_t \gamma(0)=\bw$.
\end{definition} 

The defining property in the following relies on Definition~\ref{def:left_right_insertions} of the right-insertion with $\bu$. Notice how $\nablaM$ disregards the normal component, $\nablaM\bT \cdot \bu = \nablaM\bT \cdot \bP\bu$, of any $\bu$, however $\nablaM\bT$ is not necessarily tangent per Definition~\ref{def:tangency}.
\begin{definition}[external derivative]\label{nablaM} Define $\nablaM\bT \in \mathbb{T}_{q+1}(M)$ for  a $\bT\in\mathbb{T}_q(M)$, $q\geq 0$, by 
    \begin{align*}
        \nablaM\bT[\bx] \cdot \bv := \mathcal{D}^\gamma_{\bP\bv} \bT[\bx] \,,\qquad\forall \bv\in \mathbb{R}^n\,,
    \end{align*}   
    where $\gamma \subset M$ is any path that $\bP\bv$ is tangent to.
\end{definition}

\begin{remark} Interesting enough, the above definition respects the immersion relation among submanifolds. Consider $\tilde{M}\subset M$ with the projector $\tilde{\bP}$, so that any path $\gamma{}\subset \tilde{M}$ is also a path in ${M}$. The external derivatives $\nabla_{\!\tilde{M}}\bT$ and $\nabla_{\!{M}}\bT$ of a given $\bT\in\mathbb{T}_q(M)$ differ on $\tilde{M}$ as tensors from $\mathbb{T}_{q+1}(\tilde{M})$, but nevertheless coincide pointwise as elements of $\mathbb{T}_{q-1}$ upon right-inserting,
\begin{align}\label{agreement_nablaM}
       \nabla_{\!\tilde{M}}\bT \cdot \bv = \mathcal{D}^\gamma_{\tilde{\bP}\bv} \bT = \mathcal{D}^\gamma_{{\bP}\bv} \bT  =\nablaM\bT\cdot \bv\,,
    \end{align}  
     with any $\bv\in\mathbb{R}^n$ such that $\tilde{\bP}\bv=\bP\bv$, or equivalently, for any tangential field $\bv^T\in \mathbb{PT}_1(\tilde{M})$.
    \end{remark}
\begin{proposition}\label{nablaM_rowrep}The row representation of external derivatives
   is given recursively by \eqref{basics} and
    \begin{align*}
         \nablaM \bT &= \{ \nablaM (\bT^*)\}\,.
    \end{align*}
\end{proposition}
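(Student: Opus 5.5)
We must show two things. First, for $q=0$ and $q=1$, Definition~\ref{nablaM} reproduces the matrix formulas \eqref{basics}. Second, for $q\geq 1$ the row components of $\nablaM\bT$ are $\nablaM(\bT^*)$. Throughout we use that $\mathcal{D}^\gamma_\bw$ is a limit in the common finite-dimensional space $\mathbb{T}_q$. This limit commutes with the linear operations of Definition~\ref{def:linear_structure} and with fixed left-insertions $\bT\mapsto\bT(\be_*)$, which by \eqref{star_component} extract row components.

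\textbf{Scalar and vector cases.} For a scalar $f$, fix $\bx\in M$ and $\bv$, and take a path $\gamma\subset M$ with parametrization $\gamma(t)$, $\gamma(0)=\bx$, $\pa_t\gamma(0)=\bP\bv$. Since $f$ is defined and $C^1$ on $\Omega_\delta$, the chain rule gives
\begin{align*}
\mathcal{D}^\gamma_{\bP\bv}f=\lim_{t\to 0+}\big(f(\gamma(t))-f(\bx)\big)/t=(\nabla f)\bP\bv .
\end{align*}
By Definition~\ref{def:row-represent} and \eqref{vector-covector_insert}, the covector $\nablaM f\in\mathbb{T}_1$ acts on $\bv$ as a dot product. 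It therefore coincides with the row $(\nabla f)\bP$ in \eqref{basics}.

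For a vector field $\bu$, i.e.\ $\bu^T\in\mathbb{T}_1$ with scalar components $\bu^*$, the same computation applies componentwise. The limit of $\bu^T[\gamma(t)]-\bu^T[\bx]$ is taken component by component, so $(\nablaM\bu^T\cdot\bv)^*=(\nabla\bu^*)\bP\bv$. This is exactly the $*$-th row of $(\nabla\bu)\bP$ applied to $\bv$, which matches \eqref{basics}.

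\textbf{Recursive step.} For $\bT\in\mathbb{T}_q(M)$ with $q\geq 1$, Proposition~\ref{insertion_rowrep} gives $(\nablaM\bT\cdot\bv)^*=(\nablaM\bT)^*\cdot\bv$. We therefore need $(\nablaM\bT\cdot\bv)(\be_*)=\nablaM(\bT^*)\cdot\bv$ for every $\bv$, after which Proposition~\ref{insertion_rowrep} lets us conclude equality of the tensors. By definition $\nablaM\bT\cdot\bv=\mathcal{D}^\gamma_{\bP\bv}\bT$. Left-insertion of the constant vector $\be_*$ is linear and continuous on $\mathbb{T}_q$, so it passes through the difference quotient and the limit:
\begin{align*}
\big(\mathcal{D}^\gamma_{\bP\bv}\bT\big)(\be_*)=\lim_{t\to0+}\big(\bT[\gamma(t)](\be_*)-\bT[\bx](\be_*)\big)/t=\mathcal{D}^\gamma_{\bP\bv}\bT^*=\nablaM(\bT^*)\cdot\bv .
\end{align*}
The last equality uses the same path $\gamma$, which is legitimate because the definition allows any path tangent to $\bP\bv$. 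Combining these, $(\nablaM\bT)^*\cdot\bv=\nablaM(\bT^*)\cdot\bv$ for all $\bv$, and hence $(\nablaM\bT)^*=\nablaM(\bT^*)$.

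\textbf{Main obstacle.} The delicate point is well-definedness: the limit in Definition~\ref{def:cartesian} must exist and be independent of the chosen path and parametrization. For the base cases this follows from the chain rule with the $C^1$ extension to $\Omega_\delta$, since only $\gamma(0)$ and $\pa_t\gamma(0)=\bP\bv$ enter. For the recursive step it follows by induction on $q$: the components $\bT^*$ already have well-defined derivatives, and a tensor is determined by its row components. We must also check linearity of $\bv\mapsto\mathcal{D}^\gamma_{\bP\bv}\bT$, so that $\nablaM\bT$ is a genuine element of $\mathbb{T}_{q+1}$. This also follows inductively from the matrix formula at the leaves of the tree (Remark~\ref{tree_analogy}).
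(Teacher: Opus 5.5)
Your proof is correct and takes essentially the same route as the paper. You pass row-component extraction (your left-insertion of $\be_*$, equivalent via \eqref{star_component}) through the difference quotient of Definition~\ref{def:cartesian} to obtain $(\nablaM\bT\cdot\bv)^*=\nablaM(\bT^*)\cdot\bv$, and then use Proposition~\ref{insertion_rowrep} to conclude $(\nablaM\bT)^*=\nablaM(\bT^*)$; your explicit check of the base cases against \eqref{basics} and your remarks on well-definedness and linearity in $\bv$ add detail that the paper leaves implicit.
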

\begin{proof}   By Definitions~\ref{def:cartesian} and \ref{nablaM} we have    for any $\bu$ that
\begin{align*}
   \nablaM\bT\cdot\bu= \mathcal{D}^\gamma_{\bP\bu} \bT =\lim_{t\rightarrow{}0}\{(\bT^*[\gamma(t)]-\bT^*[\bx])/t\}=\{\mathcal{D}^\gamma_\bw (\bT^*)\}=\{\nablaM(\bT^*)\cdot\bu\}\,,\qquad 
\end{align*}
and $(\nablaM\bT\cdot\bu)^*=\nablaM(\bT^*)\cdot\bu$. Proposition~\ref{insertion_rowrep}  and \eqref{star_component} imply  $(\nablaM\bT)^*=\nablaM(\bT^*)$.
\end{proof}

 \begin{definition} \label{def:divergence}
  Divergence  $\DivM\bT\in \mathbb{T}_{q-1}(M)$ for $\bT\in \mathbb{T}_q(M)$   is defined recursively in
 \begin{align*}
    \DivM\bT &:=\{\DivM \bT^*\}\,,\qquad  & \DivM\bu^T &:= \divM\bu\,,
    \\
   \CurlG \bT &:= \{\CurlG\bT^*\} \,,\qquad  &  \CurlG\bu^T &:= \curlG\bu\,,
 \end{align*}
 where  the curl  $\CurlG\bT\in \mathbb{T}_{q-1}(\Gamma)$ is also defined for a $\bT\in \mathbb{T}_q(\Gamma)$ only in case of  $n-m=2$.
 \end{definition}
 From this point on, an expression with $\CurlG\bT$ for a $\bT\in \mathbb{T}_q(M)$ always assumes that $M\equiv\Gamma$ is two-dimensional, $n-m=2$, within this expression. The contractions below are given in Definition~\ref{def:left_right_contraction}.
\begin{proposition}\label{product_rules} The following identities hold for any $\bT\in\mathbb{T}_{q}(M)$, $\bS\in\mathbb{T}_{s}(M)$, $q\geq s$,
\begin{align*}
    \nablaM(\bT\odot\bS)&=  \bS:\nablaM\bT +\bT: \nablaM\bS 
    \,,\qquad & \nablaM{}(\bS\otimes\bT)&= \nablaM\bS\otimes \bT+\bS\otimes \nablaM\bT\,,
\\
    \DivM(\bS:\bT) &= \bS:\DivM\bT+\bT:\nablaM\bS  \,,\qquad &  
    \CurlG(\bS:\bT) &= \bS:\CurlG \bT+\bT:\CurlGvec{}\bS \,,
\\
        \CurlG \nablaM\bT &= 0
         \,,\qquad & &  \DivM\CurlGvec\bT =0\,.
    \end{align*}
    such that they are well-defined. Here $\CurlGvec \bT\in \mathbb{T}_{q+1}(\Gamma)$ is defined recursively by 
    \begin{align}\label{def:curl}
    \CurlGvec \bT &:= \{\CurlGvec\bT^*\} \,,\qquad  &  \CurlGvec f &:= (\curlGscal{}f)^T\,.
    \end{align}
    \end{proposition}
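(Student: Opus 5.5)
All six identities are proved by induction on the ranks along the recursive row representation. The base cases are the scalar/vector identities of Section~\ref{sec:prelim}. Each induction step uses three facts: Proposition~\ref{nablaM_rowrep}, the recursive Definitions~\ref{def:divergence} and \eqref{def:curl}, and the recursive contractions of Definition~\ref{def:left_right_contraction}.

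\emph{Product rules for $\nablaM$.} For $\nablaM(\bS\otimes\bT)$ I induct on $s$. For $s=0$, $\bS=f$, and Proposition~\ref{nablaM_rowrep} reduces the claim componentwise to the scalar rule $\nablaM(fg)=f\nablaM g+g\nablaM f$, which follows from \eqref{basics}. The right-hand side is the outer product with the scalar covector $\nablaM f$ taken in the first slot. For general $s$, Proposition~\ref{outer_represent} gives $\bS\otimes\bT=\{\bS^*\otimes\bT\}$, and Proposition~\ref{nablaM_rowrep} then gives $\nablaM(\bS\otimes\bT)=\{\nablaM(\bS^*\otimes\bT)\}$. The induction hypothesis applies to each row component, and the row representation of the outer product reassembles the result.

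An alternative, slicker route uses Definitions~\ref{def:cartesian} and \ref{nablaM} directly. Right-inserting $\bv$ turns both sides into a Leibniz rule for the limit $\mathcal{D}^\gamma_{\bP\bv}$ of a bilinear expression. In that route one must check that the slot order is consistent with the convention that the new derivative slot is the last one.

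For $\nablaM(\bT\odot\bS)$ with $q\ge s$, the same Leibniz argument applies to the contraction. I would verify it by induction on $s$ via $\bS:\bT=\sum_*\bS^*:\bT^*$, so that $\nablaM(\bS:\bT)=\sum_*\nablaM(\bS^*:\bT^*)$. The base $s=0$ is the scalar-times-tensor rule. The base $s=1$, $\bS=\bu^T$, reads $\nablaM(\bT(\bu))=\nablaM\bT(\bu)+\bT:\nablaM\bu^T$. It follows from Proposition~\ref{insertion_rowrep}, $\bT(\bu)=\sum_*\bu^*\bT^*$, together with the scalar product rule. I expect the bookkeeping here to be the main obstacle: one must check that the contractions $\bS:\nablaM\bT$ and $\bT:\nablaM\bS$ are indeed the ones produced, i.e.\ which slots are contracted. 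This uses Proposition~\ref{associativity}, which allows the trailing derivative slot to pass through a left-contraction.

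\emph{Divergence and curl product rules.} For $\DivM(\bS:\bT)$ I induct on $q$ through $\DivM\bT=\{\DivM\bT^*\}$. The base case $q=1$, $s\le1$, is $\divM(f\bu)=f\divM\bu+\bu\cdot(\nablaM f)^T$ for $s=0$, and for $s=1$ the scalar contraction. This base case follows from the trace formula in \eqref{basics}, exactly as in the proof of Proposition~\ref{vector_parts}. For the step, when $q>s$ one writes $\bS:\bT$ row-wise, $\bS:\bT=\sum_*\bS^*:\bT^*$, and expresses it via Proposition~\ref{associativity} as a tensor whose last-slot structure matches the recursion of $\DivM$. The curl identity is identical in structure: its base case $\curlG(f\bu)=f\curlG\bu+\bu\cdot\curlGscal f$ is Proposition~\ref{curl_int_parts}, and the recursive step uses $\CurlG\bT=\{\CurlG\bT^*\}$ and $\CurlGvec\bT=\{\CurlGvec\bT^*\}$.

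\emph{The vanishing identities.} For $\CurlG\nablaM\bT=0$, Proposition~\ref{nablaM_rowrep} gives $\nablaM\bT=\{\nablaM\bT^*\}$, so $\CurlG\nablaM\bT=\{\CurlG\nablaM\bT^*\}$. Recursion down to rank zero leaves $\CurlG\nablaM f=\curlG(\nablaM f)^T$, which vanishes by Corollary~\ref{cor:vanishing_div_curl}. Similarly, $\DivM\CurlGvec\bT=\{\DivM\CurlGvec\bT^*\}$ reduces to $\divM\curlGscal f=0$, the second half of Corollary~\ref{cor:vanishing_div_curl}.
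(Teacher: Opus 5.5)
Your plan is the paper's: induction along the row recursion, with base cases from Section~\ref{sec:prelim}. Your treatment of $\CurlG\nablaM\bT=0$ and $\DivM\CurlGvec\bT=0$ also coincides with the paper's. The gaps are in the rank and slot bookkeeping that you defer.

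\textbf{First gap: the gradient rule.} Since $\odot$ needs equal ranks, the rule $\nablaM(\bT\odot\bS)=\bS:\nablaM\bT+\bT:\nablaM\bS$ is only claimed for $q=s$. Your extension to $\nablaM(\bS:\bT)$ with $q>s$ is ill-typed. For $s=0$, $q=1$, the left side $\nablaM(f\bu^T)$ has rank $2$, while $\bu^T:\nablaM f$ is a scalar. Your base case $\nablaM(\bT(\bu))=(\nablaM\bT)(\bu)+\bT:\nablaM\bu^T$ is false for $q\ge 2$. Proposition~\ref{insertion_rowrep} and the scalar rule actually give $(\nablaM\bT)(\bu)+\textstyle\sum_*\bT^*\otimes\nablaM\bu^*$, whereas $\bT:\nablaM\bu^T$ has rank $q-2$. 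Drop the extension. On the diagonal, $\nablaM(\bT\odot\bS)=\sum_*\nablaM(\bT^*\odot\bS^*)$ stays at $q=s$. It reassembles via $(\nablaM\bT)^*=\nablaM(\bT^*)$ and the left-contraction recursion, exactly as in the paper.

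\textbf{Second gap: slot order.} Actually doing the slot bookkeeping shows that two of the printed rules need correcting, and the paper's proof has the same slips.

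The outer-product rule: $\nablaM$ always appends its slot last (Definition~\ref{nablaM}). So the componentwise $s=0$ argument gives $\nablaM(f\bT)=f\nablaM\bT+\bT\otimes\nablaM f$, not $\nablaM f\otimes\bT$ as you assert. For example, take a flat $M\subset\{x_3=0\}$ with $f=x_2$ and $\bu=\be_1$. Then $\nablaM(f\bu^T)=\be^1\otimes\be^2$, but $\nablaM f\otimes\bu^T+f\nablaM\bu^T=\be^2\otimes\be^1$. The paper's $s=0$ step also proves the $\bT\otimes\nablaM f$ form, and its $s$-step then uses it as $\nablaM f\otimes\bT$. What your ``slicker route'' does prove, for all ranks, is $\nablaM(\bS\otimes\bT)\cdot\bv=(\nablaM\bS\cdot\bv)\otimes\bT+\bS\otimes(\nablaM\bT\cdot\bv)$.

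The $\DivM$ rule (and likewise $\CurlG$): inducting on $q$ through $\DivM\bT=\{\DivM\bT^*\}$ only works for $s=0$. For $s\ge1$, the row components of $\bS:\bT$ are not $\bS:\bT^*$. Your associativity idea, made precise, yields
$\DivM(\bS:\bT)=\bS:\DivM\bT+\sum_{j=1}^n(\nablaM\bS\cdot\be_j):(\bT\cdot\be_j)$.
The last term contracts the first $s$ slots and the last slot of $\bT$ with $\nablaM\bS$. By contrast, $\bT:\nablaM\bS=\{\bT^*:\nablaM\bS\}$ contracts the last $s+1$ slots. The two agree only if $s=0$ or $q=s+1$. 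The paper's reassembly $\sum_*\bT^*:\nablaM(\bS^*)=\bT:\nablaM\bS$ fails in the remaining cases. For instance, take constant $\bT=\be^1\otimes\be^2\otimes\be^1$ and $\bu=x_1\be_1$ on the same $M$. Then $\DivM(\bT(\bu))=\be^2$, but $\bu^T:\DivM\bT+\bT:\nablaM\bu^T=0$.

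A complete proof must therefore do one of two things:
\begin{enumerate}
\item restrict the $\DivM$ and $\CurlG$ rules to $s=0$ or $q=s+1$; or
\item replace $\bT:\nablaM\bS$ (resp.\ $\bT:\CurlGvec\bS$) by the sum above (resp.\ its analogue with $\CurlGvec\bS$).
\end{enumerate}
Also, your $\DivM$ base case ``$s=1$, scalar contraction'' is vacuous, because $\DivM$ of a scalar is undefined.
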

    \begin{proof} We will prove the identities for all relevant $q$ and $s$ using induction and recursive definitions.
    Base of induction for $s=0$ and for the smallest values of $q$ that makes sense of identities:
         \begin{align*}
    \nablaM (f\otimes g)&=\nablaM (f\odot g)=\nablaM(fg)=g\nablaM f+f\nablaM g =\nablaM f \otimes g+f\otimes \nablaM g\,,
    \\
     \DivM(f:\bu^T) &=\divM(f\bu) = f\divM \bu+\nablaM{}f\cdot \bu= f:\DivM \bu^T+\bu^T:\nablaM{}f \,,
        \end{align*}
    by product rules from \cite{bouck_hydrodynamical_2024} and the symmetry of the inner product in Definition~\ref{def:inner_product}. We continue by recalling Definition~\ref{def:vector_scalar_curls},
    \begin{align*}
     \CurlG(f:\bu^T)&=\curlG(f\bu)  =-f\divM \bu^\dagger-\nablaM f (\bu^\dagger) = f\CurlG\bu^T + \bu^T:\CurlGvec f \,.
            \end{align*}
    Lastly, Corollary~\ref{cor:vanishing_div_curl}  completes the base of the induction,
    \begin{align*}
        \CurlG \nablaM f &=\curlG ((\nablaM f)^T)= 0
\,,\qquad
        \DivM(\CurlGvec f)=\divM(\curlGscal{}f)=0\,.
        \end{align*}
        Induction step with $q=s$  proves the first identity  as the rank of $\nablaM\bS$ is greater than of $\bT$,
        \begin{align*}
           \nablaM(\bT\odot\bS) &= \nablaM(\textstyle{\sum_*}\bT^*\odot\bS^*)= \textstyle{\sum_*}\nablaM(\bT^*\odot\bS^*) 
         = \textstyle{\sum_*}  (\bS^*):\nablaM(\bT^*)+(\bT^*): \nablaM(\bS^*)
         \\
                  &= \textstyle{\sum_*}  (\bS^*):(\nablaM\bT)^* +(\bT^*): (\nablaM\bS)^*=  \bS:\nablaM\bT +\bT: \nablaM\bS\,.
        \end{align*}
   Induction step over $q$ 
    for a fixed $s=0$ reads,
    \begin{align*}
   \nablaM (f\otimes \bT)&=\{\nablaM(\bT^*\otimes f)\}=\{\nablaM(\bT^*)\otimes f+\bT^*\otimes{}\nablaM f\}=\nablaM\bT\otimes{}f+\bT\otimes\nablaM f \,,
   \\
     \DivM(f:\bT) &= \{\DivM{}(f\bT)^*\} =\{f:\DivM \bT^*+\bT^*:\nablaM{}f\}
     = f:\DivM \bT+\bT:\nablaM{}f\,,
     \\
      \CurlG(f:\bT) &= \{\CurlG{}(f\bT)^*\} =\{f:\CurlG \bT^*+\bT^*:\CurlGvec{}f\}
     = f:\CurlG \bT+\bT:\CurlGvec{}f\,,
     \\
        \CurlG \nablaM\bT&=\{\CurlG (\nablaM\bT)^*\}=\{\CurlG \nablaM(\bT^*)\} = 0\,,
        \\
         \DivM(\CurlGvec \bT) &= \{\DivM(\CurlGvec \bT)^*\} = \{\DivM\CurlGvec (\bT^*)\}=0\,.
        \end{align*}
        Induction step over $0\leq{}s\leq{}q$ for a fixed but arbitrary $q$,
    \begin{align*}
     \nablaM (\bS\otimes \bT)&=\{\nablaM(\bS^*\otimes\bT)\}=\{\nablaM(\bS^*)\otimes\bT+\bS^*\otimes{}\nablaM \bT\}=\nablaM\bS\otimes{}\bT+\bS\otimes\nablaM \bT\,,
     \\
      \DivM(\bS:\bT) &= \DivM(\textstyle{\sum_*}\bS^*:\bT^*)=\textstyle{\sum_*}\DivM(\bS^*:\bT^*) = \textstyle{\sum_*}\bS^*:\DivM\bT^*+\bT^*:\nablaM\bS^* \,,
     \\
      \CurlM(\bS:\bT) &= \CurlM(\textstyle{\sum_*}\bS^*:\bT^*)=\textstyle{\sum_*}\CurlM(\bS^*:\bT^*) = \textstyle{\sum_*}\bS^*\!:\!\CurlM\bT^*+\bT^*\!\!:\!\!\CurlMvec\bS^* \,,
        \end{align*}
        completes the proof thanks to recursive definitions.
    \end{proof}

\subsection{Integration and extrinsic Stokes' formula} 
\begin{definition}[component-wise integration] \label{def:integral} For any $\bT\in\mathbb{T}_q(M)$, $q\geq 1$, define recursively 
     \begin{align*}\int_M \bT &= \{\int_M \bT^*\} 
     \end{align*}
    based on the same integral as in \eqref{generalStokes}.
 \end{definition}
 The recursive definition  above applied for one-dimensional paths $M\equiv \gamma$, with a tangent unit vector field $\bw$, yields the identity $\int_\gamma \mathcal{D}_\bw^\gamma \bT = \bT|_\ba^\bb$ since \eqref{FTC} holds for scalar fields. The following proposition demonstrates this within a somewhat generalized context of path integrals inside a given submanifold $M$.
 \begin{proposition} Consider a path $\gamma \subset M$ oriented with a tangent unit vector field $\bw$ with the start  $\ba$ and  the end $\bb$ points. We have for any $\bT\in\mathbb{T}_q(M)$, $q\geq 0$, 
 \begin{align}
     \int_{\gamma} \nablaM\bT\cdot\bw = \bT[\bb]-\bT[\ba] \,,\qquad  \qquad \bw\cdot\bw=1 \,.
 \end{align}
 \end{proposition}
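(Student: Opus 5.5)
The plan is to reduce the statement to the scalar fundamental theorem of calculus \eqref{FTC} by induction on the rank $q$, using the row representation. Both sides are elements of $\mathbb{T}_q$, and the integral is defined component-wise in Definition~\ref{def:integral}. It therefore suffices to show that the row components of the two sides agree, i.e.
\begin{align*}
\Big(\int_\gamma \nablaM\bT\cdot\bw\Big)^* = \int_\gamma (\nablaM\bT\cdot\bw)^* = \bT^*[\bb]-\bT^*[\ba]\,.
\end{align*}
By Proposition~\ref{insertion_rowrep} the right-insertion acts row-wise, $(\nablaM\bT\cdot\bw)^* = (\nablaM\bT)^*\cdot\bw$. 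By Proposition~\ref{nablaM_rowrep}, $(\nablaM\bT)^* = \nablaM(\bT^*)$. Hence the $*$-component of the integrand is $\nablaM(\bT^*)\cdot\bw$ with $\bT^*\in\mathbb{T}_{q-1}(M)$, and the induction hypothesis gives the claim for $q$ once it holds for $q-1$.

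The base case $q=0$ is a scalar field $f$ on $M$. I would argue from Definition~\ref{nablaM}: since $\bw$ is tangent to $\gamma\subset M$, we have $\bP\bw=\bw$. So $\nablaM f\cdot\bw = \mathcal{D}^{\gamma}_{\bw} f$, where $\gamma$ itself may be taken as the path that $\bP\bw$ is tangent to. By Definition~\ref{def:cartesian} this is the one-sided derivative of $f$ along a parametrization of $\gamma$ with velocity $\bw$ at the point.

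Next I identify this with the path derivative $(\nabla_\gamma f)\bw$ of Section~\ref{sec:FTC}. This can be done in either of two ways:
\begin{enumerate}
\item directly, since both equal $\frac{d}{dt}f(\gamma(t))$ at $t=0$ for a parametrization of $\gamma$ with $\pa_t\gamma(0)=\bw$;
\item via the agreement relation \eqref{agreement_nablaM} with $\tilde M=\gamma$, which is valid because $\tilde{\bP}\bw=\bw=\bP\bw$.
\end{enumerate}
Then \eqref{FTC} with $\bw\cdot\bw=1$ yields $\int_\gamma \nablaM f\cdot\bw = f[\bb]-f[\ba]$.

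The main obstacle is this base-case identification, namely showing that the limit defining $\mathcal{D}^\gamma_\bw$ coincides with the quantity integrated in \eqref{FTC}, which was phrased through \eqref{basics} using an extension of $f$ to $\Omega_\delta$. I would handle it by the chain rule. For any $C^1$ extension of $f$ and an arc-length parametrization $\gamma(t)$ with $\pa_t\gamma=\bw$, we have $\frac{d}{dt}f(\gamma(t)) = (\nabla f)\bw = (\nabla f)\bw\bw^T\bw = (\nabla_\gamma f)\bw$. This uses $\bw\bw^T$ as the projector of $\gamma$ and $\bw\cdot\bw=1$. Once the base case is in hand, the induction step is purely formal, by the two row-representation propositions above.
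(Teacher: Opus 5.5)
Your proposal is correct and matches the paper's proof. The paper also inducts on $q$, obtains the base case from \eqref{FTC} together with \eqref{agreement_nablaM} (your option 2), and carries out the induction step row-wise via Definition~\ref{def:integral} and Propositions~\ref{insertion_rowrep} and \ref{nablaM_rowrep}; your chain-rule justification of the base-case identification just spells out a step the paper handles by citing \eqref{agreement_nablaM}.
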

\begin{proof} Prove by induction over $q\geq 0$. The base of induction follows from \eqref{FTC}  and \eqref{agreement_nablaM},
\begin{align*}
    \int_{\gamma} (\nablaM f)\cdot\bw= \int_{\gamma} (\nabla_\gamma f)\cdot \bw= f|_\ba^\bb\,,
\end{align*}
since $\gamma$ is immersed into $M$. 
The induction step completes the proof  as follows,
    \begin{align*}
    \int_{\gamma} \nablaM\bT \cdot \bw=  \{ \int_{\gamma} (\nablaM\bT\cdot\bw)^*  \}= \{ \int_{\gamma} \nablaM(\bT^*)\cdot\bw \}=\{\bT^*(\bb)-\bT^*(\ba)\}=\bT(\bb)-\bT(\ba)\,,
\end{align*}
where we recall  the recursive Definition~\ref{def:integral} along with Propositions~\ref{insertion_rowrep} and \ref{nablaM_rowrep}.
\end{proof}

  \begin{proposition}[Stokes' formula]\label{comp_tensor_parts}
 For any $\bT\in\mathbb{T}_q(M)$, $q\geq 1$, we have
\begin{align*}
\int_M \DivM\bT  &=\int_{\pa{}M}\bT\cdot\bt+ \int_M \bT\cdot\bm{\kappa}\,.
\end{align*}
Also, in case of  $n-m=2$, for any $\bT\in\mathbb{T}_q(\Gamma)$, $q\geq 1$,
\begin{align*}\int_\Gamma \CurlG\bT  &= \int_{\pa\Gamma} \bT\cdot\bm{\tau}\,.
\end{align*}
\end{proposition}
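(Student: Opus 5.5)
I will prove both formulas by induction on the rank $q\geq 1$, using the recursive row representation. Every operation in the statement acts row-component-wise: the divergence and curl by Definition~\ref{def:divergence}, the integral by Definition~\ref{def:integral}, and the right-insertion by Proposition~\ref{insertion_rowrep}. So the whole statement should reduce to the scalar-valued case $q=1$, which is covered by Proposition~\ref{vector_parts} and Proposition~\ref{curl_int_parts}.

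\textbf{Base case $q=1$.} Here $\bT=\bu^T$ for a vector field $\bu$ on $M$. By Definition~\ref{def:divergence}, $\DivM\bu^T=\divM\bu$. By Proposition~\ref{insertion_rowrep}, $\bu^T\cdot\bt=\bu\cdot\bt$ and $\bu^T\cdot\bm\kappa=\bu\cdot\bm\kappa$. Proposition~\ref{vector_parts} then gives
\begin{align*}
\int_M\divM\bu=\int_{\pa M}\bP\bu\cdot\bt+\int_M\bN\bu\cdot\bm\kappa .
\end{align*}
To match the statement, the projectors must be removed from the right-hand side.
\begin{itemize}
\item Since $\bt=\bP\bt$ and $\bP$ is symmetric, $\bP\bu\cdot\bt=\bu\cdot\bP\bt=\bu\cdot\bt$.
\item Since $\bP\bm\kappa=0$, i.e. $\bN\bm\kappa=\bm\kappa$ by the same proposition, $\bN\bu\cdot\bm\kappa=\bu\cdot\bN\bm\kappa=\bu\cdot\bm\kappa$.
\end{itemize}
For the curl, $\CurlG\bu^T=\curlG\bu$ and $\bu^T\cdot\bm\tau=\bu\cdot\bm\tau$. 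The identity is then exactly the integral identity of Proposition~\ref{curl_int_parts}.

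\textbf{Induction step.} Assume the divergence formula holds for rank $q-1\geq 1$, and take $\bT=\{\bT^*\}$ with $\bT^*\in\mathbb{T}_{q-1}(M)$. The chain is
\begin{align*}
\int_M\DivM\bT=\Big\{\int_M\DivM\bT^*\Big\}=\Big\{\int_{\pa M}\bT^*\cdot\bt+\int_M\bT^*\cdot\bm\kappa\Big\}=\int_{\pa M}\{\bT^*\cdot\bt\}+\int_M\{\bT^*\cdot\bm\kappa\}.
\end{align*}
\begin{itemize}
\item The first equality uses Definition~\ref{def:divergence} together with Definition~\ref{def:integral} applied to the rank-$(q-1)$ field $\DivM\bT$.
\item The second equality is the induction hypothesis.
\item The third equality uses the linear structure of Definition~\ref{def:linear_structure}, so that sums of row-tuples are row-tuples of sums. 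It also uses Definition~\ref{def:integral} again, read backwards, on $\pa M$ and on $M$.
\end{itemize}
Finally, Proposition~\ref{insertion_rowrep} identifies $\{\bT^*\cdot\bt\}=\bT\cdot\bt$ and $\{\bT^*\cdot\bm\kappa\}=\bT\cdot\bm\kappa$. The curl case is verbatim the same, with $\CurlG$ and $\bm\tau$ in place of $\DivM$ and $\bt$, and without the curvature term.

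\textbf{Main obstacle.} The only delicate point is consistency at the bottom level of the recursion. When $q=2$, the right-hand side of Definition~\ref{def:integral} for $\int_M\DivM\bT$ is a row of scalar integrals, not a covector integral. I will check that $\{\int_M\divM\bu_*\}$, viewed as a covector, is the same object as $\int_M$ of the covector. This holds because the base case is a scalar identity applied to each row component, and $\mathbb{T}_1$ elements are identified with their rows $\{\bu^*\}$. I also need Definition~\ref{def:integral} to be taken to apply on the boundary $\pa M$ with its induced measure, as in \eqref{generalStokes}.
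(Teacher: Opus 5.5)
Your proposal is correct and follows the paper's proof essentially verbatim. Both argue by induction on $q$, take Propositions~\ref{vector_parts} and \ref{curl_int_parts} as the base, and use the row-wise Definitions~\ref{def:divergence}, \ref{def:integral} and Proposition~\ref{insertion_rowrep} for the step. Your explicit removal of the projectors in the base case (via $\bt=\bP\bt$, the symmetry of $\bP$, and $\bN\bm\kappa=\bm\kappa$) fills in a detail the paper leaves implicit, and the $q=2$ ``obstacle'' you flag is already settled by Definition~\ref{def:integral} applied to a covector.
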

\begin{proof} The base $q=1$ of the induction argument is shown in Proposition~\ref{vector_parts}. Definitions~\ref{def:divergence} and \ref{def:integral} followed by the induction step and Proposition~\ref{insertion_rowrep} yield the first claim,
\begin{align*}
\int_M \DivM\bT = \{\int_M   (\DivM\bT)^* \}= \{\int_M \DivM(\bT^*)\}
=\{\int_{\pa{}M}\bT^*\cdot\bt+ \int_M  \bT^*\cdot{}\bm{\kappa}\}
\\
=\{\int_{\pa{}M}(\bT\cdot\bt)^*\}+ \{\int_M  (\bT\cdot\bm{\kappa})^*\}
=\int_{\pa{}M}\bT\cdot{}\bt+ \int_M  \bT\cdot\bm{\kappa}\,.
    \end{align*}
   Similarly, Proposition~\ref{curl_int_parts} as the base and the following induction step complete the proof,
    \begin{align*}
\int_\Gamma \CurlG\bT = \{\int_\Gamma   (\CurlG\bT)^* \}= \{\int_\Gamma \CurlG(\bT^*)\}
=\{\int_{\pa\Gamma}\bT^*\cdot\bm{\tau} \}
=\int_{\pa\Gamma}\bT\cdot\bm{\tau}\,.
    \end{align*}
\end{proof}
 
\begin{corollary}[integration by parts]\label{integration_by_parts}
  For $\bT\in\mathbb{T}_q(M)$ and $\bS\in\mathbb{T}_{s}(M)$, $s < q$ we have
\begin{align*}
  \int_M \bS:\DivM\bT &= -\int_{M} \bT:\nablaM{}\bS +\int_{\pa{}M} (\bS:\bT)\cdot{}\bt +\int_{M} (\bS:\bT)\cdot\bm{\kappa}\,.
  \\
    \int_\Gamma \bS:\CurlG\bT &= - \int_\G \bT:\CurlGvec \bS+\int_{\pa\Gamma } (\bS:\bT)\cdot\bm{\tau}\,.
\end{align*}
\end{corollary}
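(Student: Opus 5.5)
The plan is to combine the product rules of Proposition~\ref{product_rules} with the Stokes' formulas of Proposition~\ref{comp_tensor_parts}, applied to the contracted tensor $\bS:\bT\in\mathbb{T}_{q-s}(M)$. Since $s<q$, this tensor has rank $q-s\geq 1$, so both $\DivM(\bS:\bT)$ and, for $n-m=2$, $\CurlG(\bS:\bT)$ are well-defined by Definition~\ref{def:divergence}. Proposition~\ref{comp_tensor_parts} applies to them because it needs rank at least one.

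For the divergence identity, I would first use Proposition~\ref{product_rules}:
\begin{align*}
\DivM(\bS:\bT) = \bS:\DivM\bT+\bT:\nablaM\bS\,.
\end{align*}
The ranks are compatible: $\DivM\bT\in\mathbb{T}_{q-1}(M)$ with $q-1\geq s$, and $\nablaM\bS\in\mathbb{T}_{s+1}(M)$ with $s+1\leq q$, so both left-contractions make sense. Integrating this identity over $M$ is legitimate because the integral of Definition~\ref{def:integral} is component-wise and hence linear. Applying the first formula of Proposition~\ref{comp_tensor_parts} with $\bT$ replaced by $\bS:\bT$ then gives
\begin{align*}
\int_M \bS:\DivM\bT+\int_M\bT:\nablaM\bS=\int_{\pa M}(\bS:\bT)\cdot\bt+\int_M(\bS:\bT)\cdot\bm\kappa\,,
\end{align*}
and rearranging yields the first claim. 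The curl identity follows in exactly the same way. I would use the curl product rule $\CurlG(\bS:\bT)=\bS:\CurlG\bT+\bT:\CurlGvec\bS$ from Proposition~\ref{product_rules}, where $\CurlGvec\bS\in\mathbb{T}_{s+1}(\Gamma)$ is defined in \eqref{def:curl}. Combining it with the second formula of Proposition~\ref{comp_tensor_parts}, which has no curvature term, and rearranging gives the second claim.

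The only point I expect to need care is the justification of the product rules in the regime $q-s\geq1$. The proof of Proposition~\ref{product_rules} builds the contraction identities by induction over $s$ for fixed $q$, but its base cases mix equal-rank and scalar situations. I would check that the recursion $\bS:\bT=\sum_*\bS^*:\bT^*$ lowers both ranks by one, so the difference $q-s$ is preserved. Each induction step therefore reduces to the case $s=0$, where $f:\bT=f\bT$, and that case was established for every $q\geq1$. Beyond this, the argument is just bookkeeping with linearity of the integral and the recursive definitions.
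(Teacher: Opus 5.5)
\textbf{Gap: the product rule you rely on fails for $1\le s\le q-2$.} Your two steps match the paper's one-line proof exactly: the product rule for $\DivM(\bS:\bT)$ (resp.\ $\CurlG(\bS:\bT)$), then Proposition~\ref{comp_tensor_parts} for the rank-$(q-s)$ tensor $\bS:\bT$. They are complete when $s=0$ or $s=q-1$. The problem is the step you flagged and then called bookkeeping: the rule $\DivM(\bS:\bT)=\bS:\DivM\bT+\bT:\nablaM\bS$ is false for $1\le s\le q-2$, so the reduction to $s=0$ cannot close the induction.

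Start from $\DivM(\bS:\bT)=\sum_*\DivM(\bS^*:\bT^*)$ and apply the rule at ranks $(s-1,q-1)$. The second term you get is $\sum_*\bT^*:\nablaM\bS^*$, which contracts the first slot of $\bT$ against the first slot of $\bS$. By Definition~\ref{def:left_right_contraction}, however, $\bT:\nablaM\bS=\{\bT^*:\nablaM\bS\}$ is a right contraction, not a left one as you call it. It keeps the first slot of $\bT$ free and contracts its last $s+1$ slots. The two agree only when $\bT^*$ and $\nablaM\bS^*$ have equal rank, i.e.\ $q=s+1$.

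What differentiating $\bS:\bT$ actually produces is $\sum_*(\nablaM\bS\cdot\be_*):(\bT\cdot\be_*)$, which pairs slots $1,\dots,s$ and $q$ of $\bT$ with $\nablaM\bS$. For $1\le s\le q-2$, neither the right contraction (slots $q-s,\dots,q$) nor a left one (slots $1,\dots,s+1$) does this.

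\textbf{The statement itself fails in this range.} The paper's proof has the same gap. Take $n=4$, a bounded smooth flat domain $M\subset\mathbb{R}^3\times\{0\}$ (so $\bm\kappa=0$), the constant $\bT=\be^1\otimes\be^2\otimes\be^3$, and $\bS=x_3\be^1$. Then:
\begin{itemize}
\item $\DivM\bT=0$, so the left-hand side is $0$;
\item $\nablaM\bS=\be^1\otimes\be^3$, so $\bT:\nablaM\bS=\{\bT^*\odot(\be^1\otimes\be^3)\}=0$;
\item $(\bS:\bT)\cdot\bt=x_3(\bt\cdot\be_3)\,\be^2$, which integrates over $\pa M$ to $|M|\,\be^2$.
\end{itemize}
So the right-hand side is $|M|\,\be^2\neq 0$.

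\textbf{How to repair it.} Replace $\bT:\nablaM\bS$ by $\sum_*(\nablaM\bS\cdot\be_*):(\bT\cdot\be_*)$, and $\bT:\CurlGvec\bS$ by $\sum_*(\CurlGvec\bS\cdot\be_*):(\bT\cdot\be_*)$. With that change your argument goes through verbatim. Both corrected terms reduce to the stated ones when $s\in\{0,q-1\}$. Those are the only rank combinations the paper uses later (Lemma~\ref{lem:tangent_velocity}, \eqref{weak_LB}, Section~\ref{sec:Euler_eq}, Lemma~\ref{lemma:l_ij_A}).
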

\begin{proof}
    Implication of product rules in Proposition~\ref{product_rules} , \eqref{def:curl} and Proposition~\ref{comp_tensor_parts} for $\bS:\bT$.
\end{proof}

\subsection{Covariant derivative}\label{sec:covariant_derivative}
We now discuss how the tensor calculus on embedded submanifolds  introduced in Section~\ref{sec:calculus} relates to the intrinsic approach that involves Levi-Civita connection and the associated covariant tensor derivative on a Riemannian manifold. 

 \begin{definition}[covariant derivative]\label{def:covariant_derivative} For a $\bT\in \mathbb{T}_q(M)$, $\nablaM^{\mathrm{cov}} \bT\in\mathbb{PT}_{q+1}(M)$ is defined by
 \begin{align}
     \nablaM^{\mathrm{{cov}}} \bT &:= \mathbb{P}(\nablaM \bT)\,.
 \end{align}
 \end{definition}

If $\bT\in \mathbb{PT}_q$ then $\nablaM^{\mathrm{{cov}}}\bT\in \mathbb{PT}_{q+1}$ is a tangent tensor field that can be understood intrinsically to $M$ and what follows is an elaboration on this point in the case of $q=1$. The cornerstone identities \eqref{directional_product} and \eqref{directional_product_cov} that correspond to covariant and extrinsic tensor calculi emanate from  Proposition~\ref{product_rules} and \eqref{vector-covector_insert} as follows,
\begin{align}\label{scalar_separates_in_vector_grad}\nablaM(\bu^T(\bv))=(\nablaM\bu^T)(\bv)+(\nablaM\bv^T)(\bu)\,,\qquad \forall \bu^T\,, \bv^T \in \mathbb{T}_1(M)\,,
\end{align}
where only tangential terms are involved a priori: a submanifold gradient of a scalar field $\bu^T(\bv)$ on the left hand side, and linear combinations of submanifold gradients of scalar components $\bu^*$ and  $\bv^*$, see Definition~\ref{insertion_rowrep}. Contracting \eqref{scalar_separates_in_vector_grad} with a $\bw\in \mathbb{T}_1(M)$ and flipping the order using Proposition~\ref{insertion_rowrep} as in, for instance, $((\nablaM\bu^T)\cdot\bw)(\bv)= ((\nablaM\bu^T)(\bv))\cdot\bw$,  give
\begin{align}\label{directional_product}
\nablaM(\bu^T(\bv))\cdot\bw =((\nablaM\bu^T)\cdot\bw)(\bv)+((\nablaM\bv^T)\cdot\bw)(\bu)\,,\qquad \forall \bu^T\,, \bv^T\,,\bw^T \in \mathbb{T}_1(M)\,.
\end{align}
 As is, \eqref{directional_product} shows that $\nablaM$ is a proper derivation on $\mathbb{T}_1(M)$, see, e.g., \cite[Chapter III]{kobayashi1996foundations}.   One may wish to restrict \eqref{directional_product} to tangential $\bu=\bP\bu$ and $\bv=\bP\bv$ and, hence, to tangential  $\bu^T=\mathbb{P}\bu^T$ and $\bv^T=\mathbb{P}\bv^T$ and write
\begin{align}\label{directional_product_with_tangents}
\nablaM(\bu^T(\bP\bv))\cdot(\bP\bw) =((\nablaM\bu^T)\cdot(\bP\bw))(\bP\bv)+((\nablaM\bv^T)\cdot(\bP\bw))(\bP\bu)\,.
\end{align}
while also replacing $\bw=\bP\bw$ per Definition~\ref{nablaM}.
Every object on the left hand side is tangential.
Yet the tensor field $\nablaM\bu^T$ in \eqref{scalar_separates_in_vector_grad} is not tangential even for a tangential $\bu^T$: there exists a $\bv$ such that $(\nablaM\bu^T)(\bv) \neq (\nablaM\bu^T)(\bP\bv)$ even though both sides here are automatically tangential as articulated above. The covariant derivative of Definition~\ref{def:covariant_derivative} fixes this by removing the non-tangent part of $\nablaM$ operator so $(\nablaM^{\mathrm{cov}}\bu^T)(\bv) = (\nablaM^{\mathrm{cov}}\bu^T)(\bP\bv)$ holds and, if restricted to tangential $\bu^T$, maps $\mathbb{PT}_1(M)$ to $\mathbb{PT}_{2}(M)$. To see this, one rewrites \eqref{scalar_separates_in_vector_grad} by restricting it to $\bu=\bP\bu$, $\bv=\bP\bv$, $\bu^T=\mathbb{P}\bu^T$ and $\bv^T=\mathbb{P}\bv^T$, to get $\nablaM(\bu^T(\bv))=(\nablaM\bu^T)(\bP\bv)+(\nablaM\bv^T)(\bP\bu)=(\nablaM^{\mathrm{cov}}\bu^T)(\bv)+(\nablaM^{\mathrm{cov}}\bv^T)(\bu)$ and obtains similarly to \eqref{directional_product} that 
\begin{align}\label{directional_product_cov}
\nablaM^{\mathrm{cov}}(\bu^T(\bv))\cdot\bw=((\nablaM^{\mathrm{cov}}\bu^T)\cdot\bw)(\bv) +((\nablaM^{\mathrm{cov}}\bv^T)\cdot{}\bw)(\bu)\,, \qquad \forall \bu^T\,, \bv^T\,,\bw^T \in \mathbb{PT}_1(M)\,.
\end{align}
Consequently, $\nablaM^{\mathrm{cov}}$ is a derivation operator that, unlike $\nablaM$, leads to an intrinsic description.  In what follows, we compare these derivation operators through the lens of distinct tensors Laplacians on $M$ that correspond to them. For scalar fields $f$, $\DivM(\nablaM f)=\DivM(\nablaM^\mathrm{cov} f)$ is the well-known Laplace-Beltrami operator on $M$. Note that we do not introduce $\DivM^{\mathrm{cov}}$ as it coincides with $\DivM$ due to  the cyclic property of traces in \eqref{basics}. 

 \subsection{Tensor Laplace-Beltrami operator}
 In the remaining part, we introduce and discuss two distinct tensor, elliptic operators in Definition~\ref{def:laplacians}.
  \begin{definition}\label{def:laplacians}
  The covariant Laplacian  is defined along with the extrinsic Laplacian as follows
     \begin{align*} 
\Delta_M^{\mathrm{\mathrm{cov}}}\bT:=\mathbb{P}\DivM\nablaM^{\mathrm{\mathrm{cov}}}\bT \,, \qquad  \qquad \qquad \Delta_M\bT& :=\DivM\nablaM\bT\,.
        \end{align*}
\end{definition}
In principle, the covariant Laplacian $\Delta_M^{\mathrm{\mathrm{cov}}}$ can be understood intrinsically as in the above discussion. The external Laplacian $\Delta_M$ can be regarded as trivial even for a general $\bT\in\mathbb{T}_q(M)$. Indeed, since Definition~\ref{def:divergence} and Proposition~\ref{nablaM_rowrep} are recursive,  $\DivM\nablaM{}\bT$  is a component-wise operator, i.e. $\DivM\nablaM\bT=\{\DivM(\nablaM(\bT^*))\}$, which trickles down to applications of the scalar Laplace-Beltrami operator to $n^q$ scalar components of a $\bT\in\mathbb{T}_q(M)$. 

Consider the following elliptic, tensor problem: Given a forcing $\bbf \in \mathbb{T}_q(M)$ and a flux $\bq \in \mathbb{T}_q(\pa{}M)$, find $\bT\in\mathbb{PT}_q(M)$ such that
 \begin{align}\label{LaplaceM_cov}
-\Delta_M^{\mathrm{\mathrm{cov}}}\bT&=\mathbb{P}\bbf\quad\mathrm{on}\quad M\,,& \qquad  \nablaM^{\mathrm{cov}}\bT\cdot\bt &= \mathbb{P}\bq \quad\mathrm{on}\quad \pa{}M\,,
 \end{align}
 where $\bt$ is the co-normal as in \eqref{generalStokes}. Multiplying the left-hand side of \eqref{LaplaceM_cov} by $\bS\in \mathbb{PT}_q(M)$, integrating by parts through Corollary~\ref{integration_by_parts} and utilizing Proposition~\ref{associativity} and \ref{projection_with_inner} give
 \begin{align*}
   -\int_M \bS\odot\mathbb{P}\bbf &= \int_M \bS\odot\mathbb{P}\DivM\nablaM^{\mathrm{cov}}\bT = -\int_{M} \nablaM^{\mathrm{cov}}\bT\odot\nablaM^{\mathrm{cov}}\bS +\int_{\pa{}M } \bS\odot\mathbb{P}\bq +\int_{M} \bS\odot(\nablaM^{\mathrm{cov}}\bT\cdot\bm{\kappa})\,.
 \end{align*}
  Note that $\nablaM\bT\cdot\bm{\kappa}=\nablaM^{\mathrm{cov}}\bT\cdot\bm{\kappa}=0$ by Definition~\ref{def:covariant_derivative} as $\bP\bm\kappa=0$. Consequently,  any classical solution $\bT\in\mathbb{PT}_q(M)$ of the problem \eqref{LaplaceM_cov} satisfies  
  \begin{align}\label{weak_LB}
      a^{\mathrm{cov}}(\bT,\bS)=\ell(\bS)\,,\qquad  \qquad  \qquad  \forall \bS\in\mathbb{PT}_q(M)
  \end{align}
with $a^{\mathrm{cov}}(\bT,\bS):=\int_{M} \nablaM^{\mathrm{cov}}\bT\odot\nablaM^{\mathrm{cov}}\bS$ and $\ell(\bS):=\int_{\pa{}M}\bS\odot\bq+\int_M \bS\odot\bbf
$. Consequently, the notion of weak solutions can be conceived from \eqref{weak_LB} as well as computational methods for solving the tensor problem  \eqref{LaplaceM_cov}.

\section{Application: Euler equations and external momentum}\label{sec:Euler_eq}

We consider well-known intrinsic Euler equations on a fixed manifold that is formulated in \eqref{Euler-non-divergence} using the notation developed in Section~\ref{sec:covariant_derivative}. The solution of \eqref{Euler-non-divergence}  is an incompressible flow with a tangential velocity $\bu$ that corresponds to the covector field $\bu^T$. Since \eqref{Euler-non-divergence} is posed on an embedded submanifold $M$, we  may consider the following extrinsic momentum assuming a constant density, $\rho=\mathrm{const}$,
\begin{align}
\label{extrinsic_momentum}\bJ\equiv\bJ[\bu^T]:=\int_M \rho\bu^T \quad \in \quad \mathbb{R}^n \qquad \mathrm{for} \qquad \bu^T\in \mathbb{T}_1(M)\,.
    \end{align}
   Newton laws of mechanics dictate that the quantity $(-\bJ)$ equals the total reaction force that must be somehow applied to the embedded submanifold since it stays fixed in $\mathbb{R}^n$. The corresponding local reaction forces emerge on the boundary $\pa{}M$ and on $M$ itself but are often eliminated from consideration. The next lemma is instrumental in showing that, actually, $\bJ=0$ for any tangential flow $\bu$ on $M$ as long as it is incompressible and tangential to the boundary $\pa{}M$. Consequently,  such a flow on any embedded submanifold never results in a non-zero global, net reaction force regardless of equations that govern it. However, this observation does not say that  local reaction forces for material patches of the embedded submanifold vanish -- only that they must cancel out globally. Note that in what follows, the integrals are vectors in $\mathbb{R}^n$ and taken component-wise.
\begin{lemma}\label{lem:tangent_velocity}For a vector field $\bu$ on an embedded submanifold $M$ with boundary $\pa{}M$ we have
 \begin{align*}
      \int_M \bP\bu = -\int_M \divM(\bP\bu)\br+\int_{\pa{}M} (\bu\cdot\bt)\br
  \end{align*}
  with $\br$ and $\bt$ given in Definition~\ref{mean_curvature_vector} and \eqref{generalStokes}.
\end{lemma}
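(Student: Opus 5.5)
The identity will be proved component by component. For each $1\leq j\leq n$ we dot both sides with $\be_j$ and reduce everything to the scalar Stokes' formula \eqref{generalStokes} applied to the tangential vector field $x_j\bP\bu$, where $x_j=\br\cdot\be_j$ is the $j$-th coordinate function restricted to $M$.

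First, the product rule for divergence (as in the base case of Proposition~\ref{product_rules}) gives
\begin{align*}
\divM(x_j\bP\bu)=x_j\divM(\bP\bu)+\nablaM x_j\cdot\bP\bu\,.
\end{align*}
By \eqref{basics} and \eqref{mean_curvature_vector_components}, $\nablaM x_j=(\nabla x_j)\bP=\be_j^T\bP=\bP_j^T$. Since $\bP$ is symmetric and idempotent, $\bP_j\cdot\bP\bu=\be_j\cdot\bP\bu$, which is exactly the $j$-th component of $\bP\bu$.

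Next, integrate this identity over $M$. The field $x_j\bP\bu$ is tangential, so \eqref{generalStokes} applies (equivalently, Proposition~\ref{vector_parts} with a vanishing normal part) and yields
\begin{align*}
\int_M\divM(x_j\bP\bu)=\int_{\pa M}x_j\,\bP\bu\cdot\bt=\int_{\pa M}x_j\,\bu\cdot\bt\,,
\end{align*}
where the last step uses $\bt=\bP\bt$. Rearranging gives
\begin{align*}
\int_M(\bP\bu)_j=-\int_M x_j\divM(\bP\bu)+\int_{\pa M}(\bu\cdot\bt)x_j\,,
\end{align*}
which is the $j$-th component of the claimed vector identity. Collecting all $j$ completes the proof.

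There is no substantial obstacle here. The only point that needs care is checking that $\nablaM x_j\cdot\bP\bu$ reproduces $(\bP\bu)_j$, which relies on the symmetry of $\bP$ and $\bP^2=\bP$, and confirming that the field is tangential so that no curvature term $\bN(\cdot)\cdot\bm\kappa$ from Proposition~\ref{vector_parts} appears.
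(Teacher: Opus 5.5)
Your proof is correct and follows the paper's argument: a component-wise integration by parts against the coordinate functions $x_j$, using $\nablaM x_j=\bP_j^T$ to recover $(\bP\bu)_j$. The only cosmetic difference is that you apply the intrinsic formula \eqref{generalStokes} directly to the tangential field $x_j\bP\bu$, whereas the paper uses the extrinsic Corollary~\ref{integration_by_parts} and then drops the curvature term via $\bP\bm\kappa=0$.
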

\begin{proof}Definition~\ref{def:integral}, Proposition~\ref{insertion_rowrep}, \eqref{vector-covector_insert} and integration by parts via Corollary~\ref{integration_by_parts} result in
\begin{align*}
 \int_M (\mathbb{P}\bu^T)^* = \int_M \be^*:\mathbb{P}\bu^T= \int_M\nablaM(\br^*):\mathbb{P}\bu^T= -\int_{M}\br^*\divM(\mathbb{P}\bu^T)+\int_{\pa{}M}\br^*(\mathbb{P}\bu^T\cdot\bt)\,,
  \end{align*}
  where Proposition~\ref{projection_with_inner} was utilized in the second equality and, for the last one, that $\bP\bm\kappa=0$.
 \end{proof}
\begin{corollary}\label{cor:zero_momentum}
    Any incompressible, tangential flow $\bu$ on $M$ with constant density $\rho=\rho_0$ that is also tangential to the boundary $\pa{}M$ has zero extrinsic momentum \eqref{extrinsic_momentum}, i.e.
    \begin{align*}
        \bJ[\bu] =0\,\qquad \mathrm{for}\qquad  \forall \bu \quad\mathrm{s.t}  \quad \bu=\bP\bu\,, \quad \divM\bu=0\,,\quad \bu\cdot\bt=0\,.
    \end{align*}
\end{corollary}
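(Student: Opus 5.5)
The plan is to apply Lemma~\ref{lem:tangent_velocity} directly to the velocity field $\bu$ and then use the three hypotheses to make each term vanish. Since $\rho=\rho_0$ is constant, Definition~\ref{def:integral} together with the linear structure of Definition~\ref{def:linear_structure} lets us pull it out:
\begin{align*}
\bJ[\bu]=\int_M\rho_0\bu^T=\rho_0\int_M\bu^T .
\end{align*}
Because the integral is taken component-wise, it suffices to show that $\int_M\bu^*=0$ for every index $*\in\mathcal{C}$. In other words, we need the vector identity $\int_M\bu=0$ in $\mathbb{R}^n$.

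First, tangentiality $\bu=\bP\bu$ gives $\int_M\bu=\int_M\bP\bu$. Lemma~\ref{lem:tangent_velocity} then yields
\begin{align*}
\int_M\bu=-\int_M\divM(\bP\bu)\,\br+\int_{\pa M}(\bu\cdot\bt)\,\br .
\end{align*}
The first integrand vanishes identically, because $\divM(\bP\bu)=\divM\bu=0$ by incompressibility together with $\bu=\bP\bu$. The boundary integrand vanishes because $\bu\cdot\bt=0$ on $\pa M$. Hence $\int_M\bu=0$, and therefore $\bJ[\bu]=0$. If $\pa M=\emptyset$, the boundary term is simply absent and the same conclusion holds.

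There is essentially no obstacle here. The one point to check is that Lemma~\ref{lem:tangent_velocity} is applied with the correct identification between the vector $\bu$ and the covector $\bu^T$ in the definition \eqref{extrinsic_momentum}. This identification is harmless, since $u^*=u_*$ in the orthonormal Cartesian basis. It is also worth noting that the position vector $\br$ only ever appears multiplied by quantities that vanish, so the choice of origin plays no role.
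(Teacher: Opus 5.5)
Your proof is correct and is exactly the argument the paper intends: it states the corollary without a separate proof, as an immediate consequence of Lemma~\ref{lem:tangent_velocity}. As you do, it pulls the constant $\rho_0$ out of the component-wise integral, and the two right-hand terms of the lemma vanish because $\divM(\bP\bu)=\divM\bu=0$ and $\bu\cdot\bt=0$ on $\pa M$.
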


\subsection{Euler equations}
We would like to emphasize that the previous statement is not unique to Euler equation and this is why it is introduced only now as follows: Find a tangential velocity $\bu^T(t)\in \mathbb{PT}_1(M)$ and scalar pressure $p\in \mathbb{T}_0(M)$ such that for all $t\in (0,T)$,
\begin{align}\label{Euler-non-divergence}
     \pa_t{\bu^T} +(\nablaM^\mathrm{cov}\bu^T)\cdot \bu &=  -\nablaM p\,, \qquad \divM\bu=0  \quad \mathrm{on}\quad {M}\,,\qquad  \qquad \bu\cdot\bt =0 \quad \mathrm{on}\quad \pa{M} \,,
 \end{align}
given an initial condition $\bu^T(0)$. We can also recast \eqref{Euler-non-divergence} into the divergence form via the following identity,
\begin{align}\label{div-convective_indetity}
(\nablaM^\mathrm{cov}\bu^T)\cdot\bu=\mathbb{P}\DivM(\bu^T\otimes \bu^T)\qquad \mathrm{for}\qquad  \divM\bu =0\,,
\end{align}
which can be deduced by recalling Definitions~\ref{def:covariant_derivative}, \ref{def:divergence},  Propositions~\ref{nablaM_rowrep}, \ref{outer_represent} and \eqref{curl_product} in
\begin{align*}
&\DivM(\bu^T\otimes \bu^T)=\{\DivM(\bu^*\bu^T)\}=\{\DivM(\bu^*\bu^T)\}=
\{\bu^*\DivM \bu^T + \nablaM(\bu^*)\odot{} \bu^T\}
\\&=\DivM \bu^T\{\bu^*\}+ \{(\nablaM\bu^T)^*\odot{} \bu^T\}=(\divM\bu)\bu^T+ (\nablaM\bu^T):\bu^T=(\divM \bu)\bu^T+ \nablaM\bu^T\cdot\bu\,.
\end{align*}
Note that $\nablaM \bu^T=\nablaM^\mathrm{cov}\bu^T - \sum_{j=0}^k \bB_j(\bu) \otimes \bn^T_j$ and the application of $\mathbb{P}$  in \eqref{div-convective_indetity} is not redundant. Also, $\DivM\bP=-\bm\kappa^T$ from \eqref{mean_curvature_vector_components}  and \eqref{projection-row-represent} and, by Propositions~\ref{product_rules} and \ref{vector_parts},
\begin{align*}
    \mathbb{P}\DivM(p\bP)=\mathbb{P}(-p\bm\kappa^T+\bP:\nablaM p)=\nablaM p \,.
\end{align*}
Consequently, the equation in \eqref{Euler-non-divergence} can be recast in the divergence form, 
 \begin{align}\label{Euler-divergence}
     \pa_t{\bu}^T +\mathbb{P}\DivM(\bu^T \otimes\bu^T + p\bP) =0\,.
 \end{align}

 \subsection{Ambient force balance}
We now formally take the time derivative of $\bJ(t)=0$ that holds due to Corollary~\ref{cor:zero_momentum},
   \begin{align}
   \label{vanishing_momentum}0=\frac{1}{\rho}\frac{d}{dt}\bJ &= \int_M \pa_t\bu^T=-\int_M (\nablaM{}p+(\nablaM^\mathrm{cov}\bu^T)\cdot\bu) =  -\int_M\mathbb{P}\DivM\bq\,,
 \end{align}
  where we utilized \eqref{Euler-divergence} and introduced the flux $\bq:=\bu^T \otimes\bu^T + p\bP\in\mathbb{PT}_2(M)$. Due to  the projector $\mathbb{P}$ in \eqref{vanishing_momentum} we need to rely on Corollary~\ref{integration_by_parts} as in $\int_M \bT = \{\int_M \bT^*\}=\{\int_M \bT(\be_*)\}=\{\int_M \bT:\be^*\}$,
 \begin{align*}
   \int_M (\mathbb{P}\DivM\bq):\be^* = \int_M \mathbb{P}\be^* \odot \DivM\bq
=
   -\int_{M} \bq:\nablaM{}(\mathbb{P}\be^*) +\int_{\pa{}M} ((\mathbb{P}\be^*):\bq)\cdot{}\bt +\int_{M} ((\mathbb{P}\be^*):\bq)\cdot\bm{\kappa}\,,
\end{align*}
and the last term vanishes due to Proposition~\ref{associativity} that also implies $((\mathbb{P}\be^*):\bq)\cdot{}\bt=(\mathbb{P}\be^*):(\bq\cdot{}\bt)=p(\be^*\odot \bt)=p\bt^*$ per the boundary condition in \eqref{Euler-non-divergence}. The remaining term can be treated  as follows,
\begin{align*}
    -\nablaM{}(\mathbb{P}\be^*)= \nablaM{}(\bN\be^*)=\nablaM \left(\sum_{i=0}^m\bn_i^* \bn^T_i \right)=\sum_{i=0}^m\nablaM\left( \bn_i^* \bn^T_i \right)=\sum_{i=0}^m\left(\bn_i^*(\nablaM  \bn^T_i)+ (\nablaM\bn_i^*)\otimes\bn^T_i\right)\,.
    \end{align*}
    Hence, Proposition~\ref{projection_with_inner} implies that the following tensor is a linear combination of $\bn^T_i$, $1\leq i\leq m$,
    \begin{align*}
    \{\bq\odot(-\nablaM{}(\mathbb{P}\be^*))\} = \{\sum_{i=0}^m\bn_i^*\left( \bq\odot(\nablaM  \bn^T_i)\right)\}=\sum_{i=0}^m\left( \bq\odot\bB_i\right)\bn_i^T = p\bm\kappa+\sum_{i=0}^m\bu^T:\bB_i(\bu)\bn_i^T \,,
\end{align*}
where we inserted $\bq\odot\bB_i = \bu^T:\bB_i(\bu)+p\bP\odot\bB_i$ in the last equality and recalled \eqref{mean_curvature_vector_components}.
Thus, \eqref{vanishing_momentum} yields the following result.
\begin{theorem}\label{th:force_balance} Any sufficiently regular Euler flow \eqref{Euler-non-divergence} on a fixed Riemannian manifold $M$ satisfies the following extrinsic conservational principle for all times: the Young-Laplace force $p\bm\kappa$, the boundary reaction $p\bt$ are balanced by the centripetal force, i.e.
\begin{align*}
      \int_M p\bm\kappa + \int_{\pa{}M}p \bt + \sum_{i=0}^m\int_M (\bB_i(\bu)\cdot\bu)\bn_i  =0 \,.
\end{align*}
\end{theorem}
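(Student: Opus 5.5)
The plan is to assemble the computation sketched right before the statement into a clean chain of identities, one component at a time, and to check each sign and each application of an earlier result.

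\textbf{Step 1: vanishing momentum.} Since $\bu(t)$ is tangential, divergence-free and satisfies $\bu\cdot\bt=0$ on $\pa M$ for every $t$, Corollary~\ref{cor:zero_momentum} gives $\bJ(t)\equiv 0$. Differentiating in time under the integral (justified by the assumed regularity on the fixed $M$) yields $\int_M\pa_t\bu^T=0$.

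\textbf{Step 2: divergence form.} Substitute the Euler equation. By \eqref{div-convective_indetity} and the identity $\mathbb{P}\DivM(p\bP)=\nablaM p$, the equation takes the divergence form \eqref{Euler-divergence}. Hence $\int_M\mathbb{P}\DivM\bq=0$ with $\bq=\bu^T\otimes\bu^T+p\bP\in\mathbb{PT}_2(M)$.

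\textbf{Step 3: integrate by parts componentwise.} By Definition~\ref{def:integral} and Proposition~\ref{insertion_rowrep}, each component is $\int_M(\mathbb{P}\DivM\bq):\be^*$. Proposition~\ref{projection_with_inner} moves the projection onto the test covector, giving $\int_M\mathbb{P}\be^*\odot\DivM\bq$. Corollary~\ref{integration_by_parts} with $\bS=\mathbb{P}\be^*$ then produces three terms.
\begin{itemize}
\item The $\bm\kappa$-term vanishes, because $(\mathbb{P}\be^*:\bq)\cdot\bm\kappa=\mathbb{P}\be^*:(\bq\cdot\bm\kappa)$ by Proposition~\ref{associativity}, $\bq$ is tangent, and $\bP\bm\kappa=0$ by Proposition~\ref{vector_parts}.
\item The boundary term reduces to $p\bt^*$. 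Here we use $\bq\cdot\bt=(\bu\cdot\bt)\bu^T+p\bP\cdot\bt$, the boundary condition $\bu\cdot\bt=0$, and tangency of $\bt$.
\item The bulk term is $-\bq:\nablaM(\mathbb{P}\be^*)=\bq\odot\nablaM(\bN\be^*)$.
\end{itemize}

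\textbf{Step 4: the bulk term (main obstacle).} Expand $\bN\be^*=\sum_i\bn_i^*\bn_i^T$ and apply the product rule of Proposition~\ref{product_rules}. This gives $\sum_i\bigl(\bn_i^*\nablaM\bn_i^T+(\nablaM\bn_i^*)\otimes\bn_i^T\bigr)$. The second term pairs to zero against the tangent $\bq$: its first slot is $\nablaM\bn_i^*$ and its second slot is $\bn_i^T$, which is annihilated by tangency in the second argument (Proposition~\ref{projection_with_inner} applied slotwise, or directly via Definition~\ref{def:tangency}). We are left with $\sum_i(\bq\odot\bB_i)\bn_i$, where $\bB_i:=\nablaM\bn_i^T$. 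Since $\bq$ is tangent, only $\mathbb{P}\bB_i$ contributes. Split $\bq\odot\bB_i=\bu^T:\bB_i(\bu)+p\,\bP\odot\bB_i$. The trace piece gives $\bP\odot\nablaM\bn_i^T=\divM\bn_i$, and summing, $\sum_i(\divM\bn_i)\bn_i$ must equal $\bm\kappa$. To verify this I would use \eqref{mean_curvature_vector_components}, namely $\bm\kappa_j=\divM\bN_j=\sum_i\bigl(\bn_i^j\divM\bn_i+\nablaM\bn_i^j\cdot\bn_i\bigr)$, where the last term vanishes because $\nablaM$ kills normal directions. This identification, with careful bookkeeping of left versus right insertion in $\bB_i(\bu)\cdot\bu$, is the step where I expect sign or ordering slips. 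To handle it I would check it first in codimension one, where $\bm\kappa=\kappa\bn$.

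Collecting the three contributions, with the overall minus sign common to all of them, gives the stated balance $\int_M p\bm\kappa+\int_{\pa M}p\bt+\sum_i\int_M(\bB_i(\bu)\cdot\bu)\bn_i=0$.
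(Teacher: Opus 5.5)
Your proposal is correct and follows the paper's derivation step for step: $\bJ\equiv 0$ from Corollary~\ref{cor:zero_momentum}, then the divergence form \eqref{Euler-divergence} with $\bq=\bu^T\otimes\bu^T+p\bP$, then componentwise integration by parts against $\mathbb{P}\be^*$, and finally the expansion of $\nablaM(\bN\be^*)$, where the term carrying $\bn_i^T$ in one slot is killed by the tangency of $\bq$. Your explicit check that $\sum_i(\divM\bn_i)\bn_i=\bm\kappa$ is right, and it fills in what the paper only cites via \eqref{mean_curvature_vector_components}; the ordering worry is moot, since $(\bu^T\otimes\bu^T)\odot\bB_i=\bB_i(\bu,\bu)=\bB_i(\bu)\cdot\bu$ has both slots filled by $\bu$.
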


    \begin{remark}\label{remark:JRO}
   A model of inextensible viscous fluidic two-dimensional film $\Gamma(t)$ evolving with the material velocity $\bu$ in $\mathbb{R}^3$ due to a given external force $\bb$ was introduced in \cite{jankuhn_incompressible_2018} by postulating the rate of change \cite[(3.5)]{jankuhn_incompressible_2018} of the extrinsic momentum $\bJ$ for any material patch $\tilde{\Gamma}\subset\Gamma$ that, if one sets viscosity $\mu=0$ and assumes a normal force $\bb=b_N\bn$, takes the following form
   \begin{align}\label{law_momentum}
    \frac{d}{dt}\int_{\tilde{\Gamma}(t)}\rho\bu = -\int_{\pa{}\tilde{\Gamma}(t)}p\bt+\int_{\tilde{\Gamma}(t)} b_N \bn \,.
    \end{align}
Also, within the model, it was found that the surface  stays stationary, $\Gamma(t)=\Gamma$ for all $t$, with a tangential material velocity, $\bu=\bP\bu$, if it holds at every point of $\Gamma$ that
\begin{align}\label{exact_stationary_force}
    \bb =- p\bm\kappa  - (\bB(\bu)\cdot\bu  )\bn\,.
\end{align}
Consequently, the model in \cite{jankuhn_incompressible_2018} is equivalent to the Euler problem \eqref{Euler-non-divergence} with $n=3$ and $m=1$ in case \eqref{exact_stationary_force}. The total momentum balance \eqref{law_momentum} for the entire surface $\Gamma$ with the boundary $\pa\Gamma$ gives 
\begin{align}\label{law_vanishing}
    \frac{d}{dt}\int_{{\Gamma}}\rho     \bu = -\int_{\pa{}{\Gamma}}p\bt-\int_{{\Gamma}}  p\bm\kappa  - \int_{{\Gamma}} (\bB(\bu)\cdot\bu  )\bn \,.
    \end{align}
Theorem~\ref{th:force_balance} implies that the right hand side of \eqref{law_vanishing} must be zero on a stationary surface for all times. In fact, Lemma~\ref{lem:tangent_velocity} shows that not only the rate $\frac{d}{dt}\bJ(t)$ on the left hand side of \eqref{law_vanishing} but the momentum $\bJ(t)$ vanishes, for the arbitrary dimension and codimension of the embedded submanifold $M$.
\end{remark}

\section{Application: the revisited concept of Cauchy stress on submanifolds}\label{sec:stress_submanifold}

Let us introduce the  Cauchy stress tensor assumed to act at points of a submanifold $M$.  We will accept the phenomenological approach in which arbitrary forces are allowed to act on generally oriented, possibly non-tangential planes. Hence, in the following definition, we postulate that the stress tensor field $\bsigma$ on a submanifold is connected via Proposition~\ref{insertion_rowrep} with a general force field $\bbf_\bv$.
\begin{definition}[Submanifold Cauchy stress]\label{def:Cauchy_stress}
    Given stress vectors $\bbf_\bv \in \mathbb{T}_1(M)$ that are associated with planes orthogonal to  general $\bv\in \mathbb{R}^n$, the stress tensor  $\bsigma=\{\bsigma^*\}\in\mathbb{T}_2(M)$ is characterized by 
    \begin{align*}
    \bsigma(\bv)= \bbf_\bv \,.
    \end{align*}
\end{definition}
The components $\bsigma^* \in \mathbb{T}_1(M)$ are conceptually equivalent to the stress vectors of the classical Cauchy stress which are  associated with the rows of the matrix of the stress tensor instead of its columns. We keep in mind that, with this association, the equation of equilibrium involves the transpose of the stress matrix. Due to the conservation of angular momentum,  the classical Cauchy stress tensor is symmetric and the distinction between rows and columns disappears. We now revisit this conclusion for Cauchy stresses on embedded submanifolds.

First we introduce the concept of torque tensor in $\mathbb{R}^n$ whose conservation represents  the rotational symmetry of $\mathbb{R}^n$ that complements the translation symmetry.  Consider the set  $\bE\equiv \bE_n$ of $n(n-1)/2$ unordered pairs of distinct vectors that can be formed from the $n$ basis elements $\be^*$. Every element $\bE_{ij}:=\{\be_i,\be_j\}\equiv \{\be_j,\be_i\}\in \bE$, $1\leq i < j\leq n$, is associated with the two-dimensional subspace spanned by $\be_i$ and $\be_j$. In other words, $\bE$ is the bivector basis of $\mathbb{R}^n$. In addition, there is a one-to-one correspondence between $\bE$ and the set of all infinitesimal generators of rotations  $\mathbf{l}_{ij}\in \mathbb{T}_1(M)$ of the plane $\bE_{ij}$. We may express these generators using $\br(\bx)$ from Definition~\ref{mean_curvature_vector}  in
\begin{align}\label{generators}
   \mathbf{l}_{K}\equiv \mathbf{l}_{ij}:=  \br^i \be_j^T -\br^j \be_i^T\,, \qquad 1\leq i < j\leq n\,,\qquad 1\leq K \leq n(n-1)/2 \,,
\end{align}
where the multi-index $K=\{i,j\}$ was introduced to stress that $\mathbf{l}_{ij}$ are not tensor components. A given point force $\bbf \in \mathbb{T}_1$ is said to generate torque $x_i \bbf{}^j -x_j \bbf{}^i\in \mathbb{T}_0$ with respect to the subspace $\bE_{ij}$. The torques with respect to all $\bE_{ij}\equiv \bE_{K}$ can be rewritten with the help of Definition~\ref{def:inner_product} as
\begin{align*}
   \mathbf{l}_{K}\odot\bbf\,, \qquad 1\leq K \leq n(n-1)/2\,,
\end{align*}
that will appear in the forthcoming integral definition.
We consider a material patch $\tilde{M}\subset M$ subjected to stress $\bsigma\in\mathbb{T}_2(M)$ as in Definition~\ref{def:Cauchy_stress} and the corresponding stress vector $\bbf_\bv = \bsigma(\bv)$ that generates the total force $\bF$ and the total torques $m_{K}$.
\begin{definition}[force and torque]\label{def:force-torque} Given $\tilde{M}\subset M$ and $\bsigma\in\mathbb{T}_2(M)$, define $\bF\in\mathbb{T}_1$, $m_{K}\in\mathbb{T}_0$ by
    \begin{align*}
 \bF&\equiv \bF[\bsigma,\tilde{M}]:=  \int_{\pa  \tilde{M}} \bsigma(\bt) + \int_{\tilde{M}} \bsigma(\bm{\kappa})\,,
 \\
  m_{K}&\equiv  m_{K}[\bsigma,\tilde{M}]:=\int_{\pa \tilde{M}} \mathbf{l}_{K}\odot \bsigma(\bt) +\int_{\tilde{M}} \mathbf{l}_{K}\odot{}\bsigma(\bm{\kappa}) \,,\qquad   1\leq K \leq n(n-1)/2 \,.
\end{align*}
Also,  $M$  is said to be in equilibrium under  $\bsigma$ if all the quantities above vanish for any $\tilde{M}\subset M$.
\end{definition}
Note that the stress vector $\bbf_\bv$ is integrated not only along the boundary of $\tilde{M}$ in the direction of $\bv=\bt$ but also along the $M$ itself in the fictitious direction given by the mean curvature vector,  $\bv=\bm\kappa$. We insist on this phenomenological choice, as omitting it in Definition~\ref{def:force-torque} requires the later introduction of external forces that is required for the equilibrium of $M$ to be possible. 

\subsection{Equilibrium of forces}
  Before deriving the equation of equilibrium, we introduce the transpose of a rank 2 tensor.
  \begin{definition}[transpose $\bar{\bT}$ of $\bT$] \label{def:transpose} Given $\bT\in\mathbb{T}_2$, define $\bar{\bT}\in\mathbb{T}_2$ by 
      \begin{align*}
      \bar{\bT}\cdot\bv = \bT(\bv)\,,\quad \forall \bv\in \mathbb{R}^n
      \qquad \Leftrightarrow  \qquad 
      \bar{\bT}=\{\bT:\be^*\}\,.
  \end{align*}
  \end{definition}
  
\begin{proposition}\label{div_sigma_equilib} If a submanifold $M\subset \mathbb{R}^n$ is in equilibrium under stress $\bsigma\in \mathbb{T}_2(M)$, then
  \begin{align*}
 \DivM\bar{\bsigma}&=0\,
\end{align*}  
where $\bar\bsigma \in \mathbb{T}_2(M)$ is the pointwise transpose from Definition~\ref{def:transpose}.
\end{proposition}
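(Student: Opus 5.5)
The plan is to apply the Stokes' formula of Proposition~\ref{comp_tensor_parts} to the transpose $\bar{\bsigma}$, convert the total force $\bF[\bsigma,\tilde{M}]$ into the integral of $\DivM\bar{\bsigma}$ over the patch, and then localize.

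First, rewrite the integrands of Definition~\ref{def:force-torque} through Definition~\ref{def:transpose}: for every $\bv$ we have $\bsigma(\bv)=\bar{\bsigma}\cdot\bv$. Hence
\begin{align*}
\bF[\bsigma,\tilde{M}]=\int_{\pa\tilde{M}}\bar{\bsigma}\cdot\bt+\int_{\tilde{M}}\bar{\bsigma}\cdot\bm{\kappa}\,.
\end{align*}
This is pointwise, since $\bt$ and $\bm\kappa$ are fields on $\pa\tilde M$ and $\tilde M$.

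Next, invoke Proposition~\ref{comp_tensor_parts} with $q=2$, $\bT\equiv\bar{\bsigma}\in\mathbb{T}_2(\tilde{M})$ and $M$ replaced by the patch $\tilde{M}$. Two facts justify this replacement: $\tilde M$ has the same projector $\bP$ and mean curvature vector $\bm\kappa$ as $M$ (an open subpatch of the same dimension), and its co-normal $\bt$ is the one used in Definition~\ref{def:force-torque}. The right-hand side of Stokes' formula is then exactly the expression above, so
\begin{align*}
\int_{\tilde{M}}\DivM\bar{\bsigma}=\bF[\bsigma,\tilde{M}]=0
\end{align*}
for every $\tilde M\subset M$, by the equilibrium assumption.

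Finally, localize. Each row component $(\DivM\bar{\bsigma})^*=\DivM(\bar{\bsigma}^*)$ is a continuous scalar field, given the standing regularity assumption. Its integral vanishes over arbitrarily small patches $\tilde M$ around any point, so it vanishes identically. This gives $\DivM\bar\bsigma=0$, the same argument as in Corollary~\ref{cor:vanishing_div_curl}.

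The main point needing care is the identification $\bsigma(\bv)=\bar\bsigma\cdot\bv$ together with the consistency of the recursive integral (Definition~\ref{def:integral}) with right-insertion. The base case of Stokes is stated for vector fields, and the induction step uses $(\bT\cdot\bv)^*=\bT^*\cdot\bv$ from Proposition~\ref{insertion_rowrep}. Both are already provided, so no torque balance is needed for this statement; the torque part of Definition~\ref{def:force-torque} is reserved for the symmetry discussion.
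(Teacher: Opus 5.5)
Your proposal is correct and follows the paper's proof. You rewrite $\bF[\bsigma,\tilde M]$ using $\bsigma(\bv)=\bar\bsigma\cdot\bv$, apply the Stokes formula of Proposition~\ref{comp_tensor_parts} to $\bar\bsigma$ on the patch to get $\int_{\tilde M}\DivM\bar\bsigma=0$, and localize over arbitrary $\tilde M\subset M$; your explicit localization via the scalar row components is a step the paper leaves implicit.
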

\begin{proof}
    Consider an arbitrary $\tilde{M}\subset M$ in equilibrium. Since  the force from Definition~\ref{def:force-torque} vanishes,
    \begin{align*}
        0 = \bF[\bsigma, \tilde{M}] =  \int_{\pa  \tilde{M}} \bar{\bsigma}\cdot\bt + \int_{\tilde{M}} \bar{\bsigma}\cdot\bm{\kappa}=  \int_{\tilde{M}} \DivM\bar{\bsigma}\,,
    \end{align*}
      where we used \eqref{def:transpose}, $\bar{\bsigma}\cdot\bv = \bsigma(\bv)$, and Stokes formula from Proposition~\ref{comp_tensor_parts}.
\end{proof}

  \subsection{Equilibrium of torques}
We start with a useful identity  that connects the equilibrium of torques with that of forces through the introduction of tensors $\bm\omega_{K}\equiv\bm\omega_{ij}$ indexed as in \eqref{generators}. \begin{lemma}\label{lemma:l_ij_A}For any $\tilde{M}\subset \mathbb{R}^n$  and any $\bA\in\mathbb{T}_2({\tilde{M}})$, it holds for every $1\leq K \leq n(n-1)/2$ that
    \begin{align*}
\int_{\pa{}\tilde{M}}(\mathbf{l}_{K}:\bA)\cdot\bt+ \int_{\tilde{M}}(\mathbf{l}_{K}:\bA) \cdot \bm{\kappa} = \int_{\tilde{M}}\mathbf{l}_{K}:\DivM\bA -  \int_{\tilde{M}} \bA\odot\bm{\omega}_{K}
\end{align*}
with  tensors  $\bm{\omega}_{K}\in\mathbb{T}_2(M)$ correspondent to \eqref{generators} and given by
\begin{align}\label{omegas}
\bm{\omega}_{K}\equiv\bm{\omega}_{ij}:=\be^i \otimes \bP^j-\be^j\otimes \bP^i \,.
\end{align}
\end{lemma}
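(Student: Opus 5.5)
The plan is to apply the extrinsic Stokes' formula of Proposition~\ref{comp_tensor_parts} to the rank-one tensor field $\mathbf{l}_{K}:\bA\in\mathbb{T}_1(\tilde{M})$. The left-hand side of the claim is then exactly $\int_{\tilde{M}}\DivM(\mathbf{l}_{K}:\bA)$. Equivalently, this is the first identity of Corollary~\ref{integration_by_parts} with $\bS=\mathbf{l}_K\in\mathbb{T}_1$ and $\bT=\bA\in\mathbb{T}_2$, $s=1<q=2$, which reads
\begin{align*}
\int_{\pa\tilde{M}}(\mathbf{l}_K:\bA)\cdot\bt+\int_{\tilde{M}}(\mathbf{l}_K:\bA)\cdot\bm\kappa=\int_{\tilde{M}}\mathbf{l}_K:\DivM\bA+\int_{\tilde{M}}\bA:\nablaM\mathbf{l}_K\,.
\end{align*}
It therefore remains to identify the pointwise term $\bA:\nablaM\mathbf{l}_K=\bA\odot\nablaM\mathbf{l}_K$, both tensors being of rank two. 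The claim reduces to showing $\nablaM\mathbf{l}_K=-\bm\omega_K$, or at least that $\bA\odot\nablaM\mathbf{l}_K=-\bA\odot\bm\omega_K$ for every $\bA$.

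To compute $\nablaM\mathbf{l}_{ij}$, I will use the product rule of Proposition~\ref{product_rules} for $\nablaM(f\otimes\bT)$, treating $f=\br^i$ as a scalar and $\be_j^T$ as a constant covector. Since $\be_j^T$ is constant, Proposition~\ref{nablaM_rowrep} gives $\nablaM\be_j^T=\{\nablaM(\delta_{j*})\}=0$. This leaves
\begin{align*}
\nablaM\mathbf{l}_{ij}=\be_j^T\otimes\nablaM\br^i-\be_i^T\otimes\nablaM\br^j\,,
\end{align*}
up to the ordering convention of the outer product in that rule. By \eqref{mean_curvature_vector_components} and \eqref{basics}, $\nablaM\br^i=\nablaM x_i=(\bP\be_i)^T=\bP_i^T$, which in the notation of \eqref{projection-row-represent} is the row $\bP^i$ (the projector is symmetric, so rows and columns agree).

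The main obstacle is the index and ordering bookkeeping: checking which slot the outer product places the derivative in, and matching the sign and order against $\bm\omega_{ij}=\be^i\otimes\bP^j-\be^j\otimes\bP^i$. I expect the raw computation to yield $\be^j\otimes\bP^i-\be^i\otimes\bP^j=-\bm\omega_{ij}$. That fits the sign in the statement, since the term moved to the right-hand side is $-\int_{\tilde{M}}\bA\odot\bm\omega_K$.

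If instead the product rule places the gradient in the first slot, giving $\bP^i\otimes\be^j$-type terms, I will use the recursive definition of $\nablaM$ to recompute directly. By Proposition~\ref{nablaM_rowrep}, $\nablaM\mathbf{l}_{ij}=\{\nablaM(\mathbf{l}_{ij}^*)\}$, and since $\mathbf{l}_{ij}^*=\br^i\delta_{j*}-\br^j\delta_{i*}$, its $*$-th row is $\delta_{j*}\bP^i-\delta_{i*}\bP^j$. This rewrites as $\be^j\otimes\bP^i-\be^i\otimes\bP^j$ by Proposition~\ref{outer_represent}, and so equals $-\bm\omega_{ij}$. This row-wise verification settles the ordering unambiguously and concludes the proof.
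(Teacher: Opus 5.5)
Your proposal is correct and follows essentially the paper's route: Stokes' formula (Proposition~\ref{comp_tensor_parts}) applied to $\mathbf{l}_K:\bA$, combined with the product rule for $\DivM(\bS:\bT)$. The key step is $\nablaM\mathbf{l}_{ij}=\be^j\otimes\bP^i-\be^i\otimes\bP^j=-\bm{\omega}_{ij}$, which the paper obtains from $\nablaM(\br^i\be^*)=\be^*\otimes\bP^i$ followed by skew-symmetrization. Your row-wise check via Proposition~\ref{nablaM_rowrep} is the right way to fix the slot ordering, with the derivative in the last slot; this matters because $\bA$ need not be symmetric.
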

\begin{proof}
Proposition~\ref{comp_tensor_parts} with $\bT=\mathbf{l}_{K}:\bA$ reads
\begin{align}\label{div_lij_A}
\int_{\tilde{M}} \DivM(\mathbf{l}_{K}:\bA)  &=\int_{\pa{}{\tilde{M}}}(\mathbf{l}_{K}:\bA)\cdot\bt+ \int_{\tilde{M}} (\mathbf{l}_{K}:\bA) \cdot\bm{\kappa}\,.
\end{align}
At the same time, using the product rules of Proposition~\ref{product_rules} we deduce for any $1\leq i\leq n$,
\begin{align*}
  \DivM((\br^i\be^*):\bA) =(\br^i\be^*):\DivM\bA+  \bA\odot(\be^*\otimes\bP^i)\,,
\end{align*}
since $ \nablaM(\br^i\be^*)=\br^i\nablaM\be^*+\be^*\otimes\nablaM(\br^i) = \be^*\otimes\bP^i$. 
Skew-symmetrizing as in \eqref{generators} gives
\begin{align*}
    \DivM(\mathbf{l}_{ij}:\bA) =\mathbf{l}_{ij}:\DivM\bA+  \bA\odot(\be^j\otimes\bP^i-\be^i\otimes\bP^j)\,,
\end{align*}
which, recalling \eqref{generators} and \eqref{div_lij_A}, completes the proof.
\end{proof}
  \begin{lemma}[Equivalent formula]\label{th:equiv_torque} For every torque $m_{K}$, $1\leq K\leq n(n-1)/2$, we have
\begin{align*}
       m_{K}[\bsigma,\tilde{M}]&=\int_{\tilde{M}}\mathbf{l}_{K}\odot\DivM\bar{\bsigma}-  \int_{\tilde{M}}\bm{\omega}_{K}\odot\bar{\bsigma}\,,
\end{align*} 
where $\mathbf{l}_K$ and
$\bm{\omega}_{K}$ are given in \eqref{generators} and Lemma~\ref{lemma:l_ij_A}.
\end{lemma}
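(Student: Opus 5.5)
The plan is to apply Lemma~\ref{lemma:l_ij_A} with $\bA=\bar{\bsigma}$ and then recognize the two left-hand integrals there as exactly the two integrals in the definition of $m_K$ in Definition~\ref{def:force-torque}. This requires only pointwise algebraic identities relating $\mathbf{l}_K:\bar\bsigma$ to $\bsigma$. Since $\mathbf{l}_K\in\mathbb{T}_1(M)$ and $\bar\bsigma\in\mathbb{T}_2(M)$, the left contraction $\mathbf{l}_K:\bar\bsigma$ is the left-insertion $\bar\bsigma(\mathbf{l}_K)$ by \eqref{vector-covector_insert}, where we identify $\mathbf{l}_K$ with its vector.

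The key step is to show that, for any $\bv\in\mathbb{R}^n$,
\begin{align*}
(\mathbf{l}_K:\bar\bsigma)\cdot\bv=\bar\bsigma(\mathbf{l}_K)\cdot\bv=(\bar\bsigma\cdot\bv)(\mathbf{l}_K)=\bsigma(\bv)(\mathbf{l}_K)=\mathbf{l}_K\odot\bsigma(\bv)\,.
\end{align*}
The first equality is \eqref{vector-covector_insert}. The second is the commutativity of insertions in Proposition~\ref{insertion_rowrep}. The third is Definition~\ref{def:transpose}. The last is the dual pairing \eqref{dual_pairing} together with Definition~\ref{def:inner_product} for rank one. One could alternatively route this through Proposition~\ref{associativity}, writing $(\mathbf{l}_K:\bar\bsigma)\cdot\bv=\mathbf{l}_K:(\bar\bsigma\cdot\bv)=\mathbf{l}_K:\bsigma(\bv)$, which for equal ranks is $\mathbf{l}_K\odot\bsigma(\bv)$. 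That route is cleaner, and I would use it.

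Taking $\bv=\bt$ on $\pa\tilde M$ and $\bv=\bm\kappa$ on $\tilde M$, the left side of Lemma~\ref{lemma:l_ij_A} becomes $m_K[\bsigma,\tilde M]$. The right side becomes
\begin{align*}
\int_{\tilde M}\mathbf{l}_K:\DivM\bar\bsigma-\int_{\tilde M}\bar\bsigma\odot\bm\omega_K\,.
\end{align*}
Here $\mathbf{l}_K:\DivM\bar\bsigma=\mathbf{l}_K\odot\DivM\bar\bsigma$, since both factors are of rank one. Also $\bar\bsigma\odot\bm\omega_K=\bm\omega_K\odot\bar\bsigma$ by the symmetry of the Frobenius product, which follows recursively from Definition~\ref{def:inner_product}.

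I expect the main obstacle to be bookkeeping rather than substance. One point is to make sure that the left/right contraction conventions of Definition~\ref{def:left_right_contraction} do give $\mathbf{l}_K:\bar\bsigma=\bar\bsigma(\mathbf{l}_K)$ and not the right insertion. The other is to check that the transpose convention in Definition~\ref{def:transpose} puts $\bv$ in the slot that matches $\bsigma(\bv)=\bbf_\bv$.
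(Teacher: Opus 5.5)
Your proof is correct and is the paper's argument: the paper also rewrites $\mathbf{l}_K\odot\bsigma(\bv)=\mathbf{l}_K:(\bar\bsigma\cdot\bv)=(\mathbf{l}_K:\bar\bsigma)\cdot\bv$ using Definition~\ref{def:transpose} and Proposition~\ref{associativity}, and then invokes Lemma~\ref{lemma:l_ij_A}. Your choice $\bA=\bar\bsigma$ is the right one; the paper's text says $\bA=\mathbf{l}_K:\bar\bsigma$, which is evidently a slip, since that tensor has rank one and so cannot be the rank-two $\bA$ the lemma requires.
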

\begin{proof} The torque terms in Definition~\ref{def:force-torque} can be rearranged via Definition~\ref{def:transpose}  as follows,
\begin{align}\mathbf{l}_{K}\odot \bsigma(\bv)= \mathbf{l}_{K}:\bsigma(\bv)=\mathbf{l}_{K}:(\bar{\bsigma}\cdot\bv)=(\mathbf{l}_{K}:\bar{\bsigma})\cdot\bv\,,
\end{align}
where at the last equality we utilized  Proposition~\ref{associativity}. Application of Lemma~\ref{lemma:l_ij_A} with $\bA=\mathbf{l}_{K}:\bar\bsigma$ yields the claim.
\end{proof}

\begin{theorem}\label{th:normal-at-tangential} If a submanifold $M\subset \mathbb{R}^n$ is in equilibrium under stress $\bsigma\in \mathbb{T}_2(M)$, then pointwise,
\begin{align*}
\bN:\bsigma(\bP\bv)&=0\,.
\end{align*}  
where $\bN$ and $\bP$ are given in \eqref{projection-row-represent}. In other words, the stress vector from Definition~\ref{def:Cauchy_stress} must be tangential, $\bbf_\bv \in \mathbb{PT}_1(M)$, for every tangential orientation $\bv\in  \mathbb{PT}_1(M)$ at any point of $M$.
\end{theorem}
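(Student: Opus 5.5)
Assume $M$ is in equilibrium under $\bsigma$. The plan is to convert the vanishing of torques into a pointwise algebraic constraint on $\bar\bsigma$, and then to identify that constraint with $\bN:\bsigma(\bP\bv)=0$.

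First, Proposition~\ref{div_sigma_equilib} gives $\DivM\bar\bsigma=0$ pointwise on $M$. Inserting this into Lemma~\ref{th:equiv_torque} and using $m_K[\bsigma,\tilde M]=0$ for every patch $\tilde M\subset M$ yields
\begin{align*}
\int_{\tilde M}\bm\omega_K\odot\bar\bsigma=0\qquad \forall\,\tilde M\subset M\,,\quad 1\leq K\leq n(n-1)/2\,.
\end{align*}
Since $\tilde M$ is arbitrary and the fields are assumed regular, we conclude $\bm\omega_{ij}\odot\bar\bsigma=0$ pointwise for all $i<j$.

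Next we unpack this scalar identity. By \eqref{omegas} and the definitions of $\odot$ and the transpose, $(\be^i\otimes\bP^j)\odot\bar\bsigma=\bar\bsigma(\be_i)\cdot\bP\be_j=\bsigma(\bP\be_j)\cdot\be_i$. Write $\bS_{ij}:=\be_i\cdot\bsigma(\bP\be_j)$, so $\bS=\bsigma\circ\bP$ as a bilinear form. The condition then reads $\bS_{ij}=\bS_{ji}$ for all $i,j$, which gives $\bu\cdot\bsigma(\bP\bw)=\bw\cdot\bsigma(\bP\bu)$ for all $\bu,\bw\in\mathbb{R}^n$.

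Finally, we choose the arguments to isolate the normal-at-tangential block. Take $\bu=\bN\bz$ and $\bw=\bP\bv$. The right-hand side becomes $\bP\bv\cdot\bsigma(\bP\bN\bz)=0$, since $\bP\bN=0$. Hence $\bN\bz\cdot\bsigma(\bP\bv)=0$ for all $\bz,\bv$, that is, the normal part of $\bsigma(\bP\bv)$ vanishes. In the row notation this is exactly $\bN:\bsigma(\bP\bv)=0$ by \eqref{projection-row-represent}, so $\bbf_{\bP\bv}=\bsigma(\bP\bv)\in\mathbb{PT}_1$.

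The main obstacle is the index bookkeeping in unpacking $\bm\omega_K\odot\bar\bsigma$. We must track which slot the transpose swaps and where $\bP$ lands (on the orientation argument, not the output), using Definitions~\ref{def:inner_product}, \ref{def:transpose} and Proposition~\ref{outer_represent}. A sign or slot error here would wrongly suggest full symmetry of $\bsigma$. The point is that the symmetry holds only for $\bsigma\circ\bP$, and this yields only the normal-at-tangential cancellation, not tangentiality of $\bsigma$ on normal orientations.
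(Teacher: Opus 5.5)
Your proposal is correct and follows the paper's proof. You use Proposition~\ref{div_sigma_equilib} and Lemma~\ref{th:equiv_torque} to get $\bm\omega_K\odot\bar\bsigma=0$ pointwise, read this as the symmetry $\bP^i\odot\bar\bsigma^j=\bP^j\odot\bar\bsigma^i$ (your $\bS_{ij}=\bS_{ji}$), and then move a normal vector from the output slot into the $\bP$-slot, which is the paper's computation $\bn_k^T:\bsigma(\bP\bv)=(\bar\bsigma:\bP(\bn_k))\cdot\bv=0$. The only difference is cosmetic: you phrase the last step through the bilinear form $(\bu,\bw)\mapsto\bu\cdot\bsigma(\bP\bw)$ evaluated at $\bu=\bN\mathbf{z}$, implicitly using $\bP^2=\bP$, instead of in row notation.
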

\begin{proof}Consider an arbitrary $\tilde{M}\subset M$ in equilibrium as in Definition~\ref{def:force-torque}. Proposition~\ref{div_sigma_equilib} implies that $\DivM\bar{\bsigma}=0$ pointwise and, consequently, Lemma~\ref{th:equiv_torque} gives
\begin{align*}
       \bm{\omega}_{K}\odot\bar{\bsigma}=0\,,\quad 1\leq K \leq n(n-1)/2\,.
\end{align*}
By recalling the indeces in \eqref{omegas}, one deduces that $(\be^i \otimes \bP^j  )\odot\bar{\bsigma}=(\be^j\otimes \bP^i)\odot\bar{\bsigma}$ and
\begin{align}\label{P_i_sigma_j}
   \bP^i\odot (\bar\bsigma)^j =\bP^j\odot (\bar\bsigma)^i\,,\qquad 1\leq i < j \leq n\,,
\end{align}
since $(\be^i \otimes \bP^j  )\odot\bar{\bsigma}=\textstyle{\sum_*}(\be^i \otimes \bP^j  )^*\odot (\bar\bsigma)^* =\bP^j\odot (\bar\bsigma)^i$ by Definition~\ref{def:left_right_contraction} and Proposition~\ref{outer_represent}. Also, $\bsigma(\bP\bv) =  \bar\bsigma\cdot(\bP\bv)=\{\bar\bsigma^*\cdot\bP\bv\}$  by Definition~\ref{def:transpose} and Proposition~\ref{insertion_rowrep}. Hence, \eqref{P_i_sigma_j} implies
\begin{align*}
\bar\bsigma^*\cdot\bP\bv= (\bP:\bar\bsigma^*)\cdot\bv=
 (\bar\bsigma:\bP^*)\cdot\bv\,,
\end{align*}
and, consequently, the vector $\bsigma(\bP\bv)$ is tangential since, for every normal $\bn^T_k$, $1\leq k \leq m$,
\begin{align*}
\bn_k^T:\bsigma(\bP\bv)=\bn_k^T:\{(\bar\bsigma:\bP^*)\cdot\bv\}
 = (\bar\bsigma:(\textstyle{\sum_*}\bn_k^*:\bP^*))\cdot\bv=  (\bar\bsigma:\bP(\bn_k))\cdot\bv  \,,
\end{align*}
and $\bP(\bn_k)=0$. Recalling \eqref{projection-row-represent} and Definition~\ref{def:left_right_contraction} yield the claim.
\end{proof}
\begin{remark}
    The cancelation of the normal-at-tangential component of the stress vector from Theorem~\ref{th:normal-at-tangential} was also derived in the proof of \cite[Theorem 5.2]{gurtin1975continuum} for elastic surfaces, i.e. submanifolds of codimension one.  Since the authors restrict the consideration to the tangential orientation of the cut, see \cite[(5.10)]{gurtin1975continuum}, they were able to deduce the tangentiality and symmetry below \cite[(5.19)]{gurtin1975continuum}. Without this restriction, Newton laws with the conservation of momentum and angular momentum imply Theorem~\ref{th:normal-at-tangential} only. Consequently, mathematical models involving a stress tensor on embedded submanifolds may very well be non-tangential and, more surprisingly, non-symmetric.
\end{remark}

\section{Application: Dirichlet energy rate for an evolving submanifold}\label{sec:evolving_dirichlet}

We demonstrate the properties of the recursive tensor calculus proposed in Section~\ref{sec:calculus} by working on a submanifold $M(t)$, $t\in (0,T)$ which smoothly evolves through a fixed domain $\Omega\subset\mathbb{R}^n$. We study the following domain-dependent, gradient energy (in the sense of Definition~\ref{nablaM}) for a general rank tensor field $\bT(t)\equiv \bT \in \mathbb{T}_q(M(t))$ that is assigned on every $M(t)$ and for all $t\in (0,T)$.
\begin{definition}[tensorial Dirichlet energy]\label{dirichlet_energy} Given $\bT(t)\in\mathbb{T}_q(M(t))$, $t\in(0,T)$, define
\begin{align*}
E(t)=\frac12\int_{M(t)}|\nablaM \bT|^2\,,\qquad \qquad |\nablaM\bT|^2:=\nablaM \bT \odot \nablaM\bT\,.
\end{align*}
\end{definition}
Motivated by \cite{dziuk_finite_2013}, we will derive the rate of change $\frac{d}{dt}E(t)$ for the general rank, dimension, and codimension. The main instrument is the so-called Reynolds transport theorem, the scalar version of which can be found in \cite{ambrosio2000functions, delfour2011shapes} while the  recursive Definition~\ref{def:integral} easily yields the general rank case as follows. 

\begin{proposition}[Reynolds transport formula]\label{reynolds}
For any ambient field $\tilde{\bT}(t) \in \mathbb{T}_q(\Omega)$, $t\in (0,T)$, and a submanifold $M\equiv M(t)\subset \Omega$ evolving with the material velocity $\bw^T(t) \in \mathbb{T}_1(\Omega)$, we have
     \begin{align*}
       \frac{d}{dt}\int_{M(t)} \tilde{\bT} = \int_{M(t)} \mathcal{D}_\bw^t \tilde{\bT} + \int_{M(t)} (\divM \bw) \tilde{\bT} \,,
    \end{align*}
    where $\mathcal{D}_\bw^t \tilde{\bT} \in \mathbb{T}_q(M(t))$ is given in \eqref{ambient_material}.
    \end{proposition}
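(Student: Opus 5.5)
The plan is an induction over the rank $q$ using the recursive Definition~\ref{def:integral}, with the scalar Reynolds transport theorem from \cite{ambrosio2000functions, delfour2011shapes} as the base. The equation \eqref{ambient_material} is not reproduced above. I will assume it defines the ambient material derivative as $\mathcal{D}_\bw^t \tilde{\bT} := \pa_t\tilde{\bT} + \nabla\tilde{\bT}\cdot\bw$, with $\nabla$ the Cartesian gradient applied recursively. Equivalently, it is the limit $\lim_{h\to0}(\tilde{\bT}(t+h)[\Phi_{t,t+h}(\bx)]-\tilde{\bT}(t)[\bx])/h$ along the flow map $\Phi$ generated by $\bw$. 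The only structural fact I need is that this operator is row-wise:
\begin{align*}
\mathcal{D}_\bw^t \tilde{\bT} = \{\mathcal{D}_\bw^t (\tilde{\bT}^*)\}\,.
\end{align*}
This parallels Proposition~\ref{nablaM_rowrep}. It follows either from the limit definition, taken in the common space $\mathbb{T}_q$, or from the recursive definition of $\nabla$ together with Proposition~\ref{insertion_rowrep} for the right-insertion with $\bw$.

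For the base case $q=0$, take a scalar $\tilde f(t)$. The classical formula for a moving submanifold of arbitrary dimension and codimension reads
\begin{align*}
\frac{d}{dt}\int_{M(t)}\tilde f=\int_{M(t)}\big(\pa_t\tilde f+\nabla\tilde f\cdot\bw\big)+\int_{M(t)}\tilde f\,\divM\bw\,.
\end{align*}
This is cited from the literature. If a self-contained argument is wanted, I would pull back to $M(0)$ via $\Phi$ and differentiate the Jacobian of the induced measure. The tangential Jacobian $J(t)$ satisfies $\pa_t J=(\divM\bw)J$, since $\pa_t$ of $\sqrt{\det(D\Phi^TD\Phi)}$ restricted to the tangent space is $\tr(\bP\nabla\bw\bP)\,J=\tr((\nabla\bw)\bP)\,J$. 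The last equality uses cyclicity of the trace and $\bP^2=\bP$, and is exactly \eqref{basics}.

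The induction step assumes the claim for rank $q-1$ and applies it to each row component $\tilde{\bT}^*\in\mathbb{T}_{q-1}(\Omega)$. Then
\begin{align*}
\frac{d}{dt}\int_{M(t)}\tilde{\bT}=\Big\{\frac{d}{dt}\int_{M(t)}\tilde{\bT}^*\Big\}=\Big\{\int_{M(t)}\mathcal{D}_\bw^t\tilde{\bT}^*+\int_{M(t)}(\divM\bw)\tilde{\bT}^*\Big\}\,.
\end{align*}
The first equality holds because differentiation in $t$ commutes with forming the finite list $\{\cdot\}$, using the linear structure of Definition~\ref{def:linear_structure}. I then reassemble the right-hand side. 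The row-wise property of $\mathcal{D}_\bw^t$ gives the first term, and $f\bT=\{f\bT^*\}$ from Definition~\ref{def:linear_structure} together with Definition~\ref{def:integral} gives the second.

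The main obstacle is not the induction, which is mechanical, but making the base case and the row-wise property rigorous. This means pinning down \eqref{ambient_material} so that it depends only on values of $\tilde{\bT}$ along trajectories of $M(t)$, and checking the measure derivative $\pa_tJ=(\divM\bw)J$ for general codimension with non-tangential $\bw$. For the latter I would first try the determinant-derivative identity $\frac{d}{dt}\det G=\det G\,\tr(G^{-1}\dot G)$ applied to the Gram matrix of pushed-forward tangent frames. The normal part of $\nabla\bw$ drops out after projection by $\bP$.
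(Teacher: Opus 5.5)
Your proposal is correct and follows the paper's own route. The paper cites the scalar Reynolds theorem and obtains the general rank case directly from the component-wise integral of Definition~\ref{def:integral}, implicitly using, as you do, that \eqref{ambient_material} is row-wise. Your reading $\pa_t\tilde{\bT}+\nabla\tilde{\bT}\cdot\bw$ of \eqref{ambient_material} is right by the chain rule along material paths, and your Jacobian sketch $\pa_t J=\tr((\nabla\bw)\bP)\,J$ is a valid extra that the paper does not give.
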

    Note that all integrands on the right-hand side of Reynolds transport formula are  understood as instantaneous tensor fields on $M(t)$ and we seek a formula for $\frac{d}{dt}E(t)$ with the same property. We now state the main result of Section~\ref{sec:evolving_dirichlet}, and its proof is postponed to Section~\ref{prof:dirichlet_rate}.
\begin{theorem}\label{thm:dirichlet_rate} Given $\bT(t) \in \mathbb{T}_q(M(t))$ on an evolving submanifold $M(t)$, $t\in (0,T)$, satisfying Assumption~\ref{assump:normal} with the material velocity $\bw^T(t) \in \mathbb{T}_1(M(t))$, the rate of change of the Dirichlet energy  from Definition~\ref{dirichlet_energy} can be expressed  instantaneously on $M(t)$ as follows,
    \begin{align*}
        \frac{dE(t)}{dt}
        =\int_{M(t)} \nablaM \bT \odot \nablaM(\mathcal{D}^t_\bw  \bT)+ \frac12\int_{M(t)}|\nablaM \bT|^2 \divM \bw -\int_{M(t)}(\nablaM \bT \bigcirc\nablaM^\mathrm{cov}\bw^T)\odot{}\nablaM \bT\,, 
    \end{align*}
    where the operator $\mathcal{D}_\bw^t$ and binary operation $\bigcirc$ are given in Definition~\ref{def:material_deriv} and  \eqref{bigcurc}.
\end{theorem}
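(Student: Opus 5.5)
We apply the Reynolds transport formula of Proposition~\ref{reynolds} to the scalar integrand $\tilde{\bT}=\frac12|\nablaM\bT|^2$. This requires extending $\bT$ off $M(t)$ so that $\nablaM\bT$ becomes an ambient field. We take the extension supplied by Assumption~\ref{assump:normal}, presumably constant in normal directions. By Definition~\ref{def:material_deriv} the material derivative $\mathcal{D}^t_\bw$ should then depend only on the trace of $\bT$ on the space--time surface $\bigcup_t M(t)$. Reynolds immediately produces the second term $\frac12\int_{M(t)}|\nablaM\bT|^2\divM\bw$. It leaves $\int_{M(t)}\mathcal{D}^t_\bw\bigl(\frac12\nablaM\bT\odot\nablaM\bT\bigr)$. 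The material derivative obeys the product rule for $\odot$, since it is a directional derivative in space--time applied componentwise via the recursive row representation. Hence this integral equals $\int_{M(t)}\nablaM\bT\odot\mathcal{D}^t_\bw(\nablaM\bT)$.

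The core of the proof is a commutator formula
\begin{align*}
\mathcal{D}^t_\bw(\nablaM\bT)=\nablaM(\mathcal{D}^t_\bw\bT)-\nablaM\bT\bigcirc\nablaM^{\mathrm{cov}}\bw^T+\mathcal{R},
\end{align*}
where $\mathcal{R}$ is a remainder that we show is annihilated after contracting with $\nablaM\bT$. We prove it by induction on $q$ using the recursive structure. Proposition~\ref{nablaM_rowrep} gives $(\nablaM\bT)^*=\nablaM(\bT^*)$. The material derivative acts componentwise, i.e.\ $(\mathcal{D}^t_\bw\bT)^*=\mathcal{D}^t_\bw(\bT^*)$. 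We expect $\bigcirc$ in \eqref{bigcurc} to be defined recursively as well, so the statement reduces to the scalar case $q=0$. For a scalar $f$, $\nablaM f=(\nabla f)\bP$ is computed from the ambient extension. Differentiating along the flow yields $\nabla(\mathcal{D}^t_\bw f)\bP-(\nabla f)(\nabla\bw)\bP+(\nabla f)\,\mathcal{D}^t_\bw\bP$. This is the classical identity used in \cite{dziuk_finite_2013}, here in arbitrary codimension.

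The main obstacle is the term $(\nabla f)\,\mathcal{D}^t_\bw\bP$ together with the splitting of $\nabla\bw$ into tangential and normal parts. We must show two things. First, the combination $-(\nablaM f)(\nablaM\bw)+(\nabla f)\mathcal{D}^t_\bw\bP$ reduces, after contraction with $\nablaM f$ (which is tangential in its last slot), to $-(\nablaM f)\,\mathbb{P}(\nablaM\bw)=-\nablaM f\bigcirc\nablaM^{\mathrm{cov}}\bw^T$. Second, the normal-derivative contributions vanish by Assumption~\ref{assump:normal}. For the first, we differentiate $\bP^2=\bP$ to get $\bP\,\mathcal{D}^t_\bw\bP\,\bP=0$, so $\mathcal{D}^t_\bw\bP$ has only normal--tangential blocks. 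These blocks can be expressed through $\bN\nablaM\bw\bP$ and its transpose, by differentiating $\bN\bn_i$-type relations along the flow. Only the part that survives contraction with the tangential covector $\nablaM f$ remains, and we expect it to cancel the non-covariant part $\bN(\nablaM\bw)$. This leaves exactly $\mathbb{P}\nablaM\bw=\nablaM^{\mathrm{cov}}\bw^T$, in agreement with Definition~\ref{def:covariant_derivative}.

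For general $q$, every row component $(\nablaM\bT)^{*\cdots*}$ is a scalar gradient, which is a tangential covector in its last slot. Proposition~\ref{projection_with_inner} therefore lets us insert the projection $\mathbb{P}$ wherever needed inside the contraction $\odot$. Summing the scalar identities over components gives the claimed formula, with the sign and ordering fixed by \eqref{bigcurc}. The most delicate bookkeeping is the codimension-$m$ computation of $\mathcal{D}^t_\bw\bP$ through $\sum_i\bn_i^T\otimes\bn_i^T$; we would verify it first for a single normal and then sum over $i$.
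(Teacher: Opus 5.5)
Your proposal is correct and follows essentially the paper's route: Reynolds transport (Proposition~\ref{reynolds}) with the normal extension, the product rule, the commutator of $\mathcal{D}^t_\bw$ with $\nablaM$ obtained from $\nablaM\bT=\nabla\bT\bigcirc\bP$ (Lemma~\ref{l:surface_sonnet_virga}), and tangency of the scalar leaves of $\nablaM\bT$ to discard the $\mathcal{D}^t_\bw\bP$ term and replace $\nablaM\bw^T$ by $\nablaM^{\mathrm{cov}}\bw^T$ (Lemmas~\ref{p:material_n_P} and~\ref{vanishing_Cw}); your identity $\bP\,\mathcal{D}^t_\bw\bP\,\bP=0$, derived from $\bP^2=\bP$, is a slightly cleaner substitute for $\mathbb{P}(\bC[\bw])=0$. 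The only inaccuracy is the expected cancellation: with the normal extension, the $\mathcal{D}^t_\bw\bP$ contribution and $\bN(\nablaM\bw)$ vanish separately rather than cancelling each other (the former because both of its slots meet tangential covectors, the latter because $(\nablaM f)\bN=0$), and the cancellation you describe only occurs when a non-normal extension is used, where it removes the normal part of $\nabla f$.
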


    \subsection{Material perspective extrinsically}\label{sec:material_perspect}
Before Section~\ref{sec:evolving_dirichlet}, a fixed submanifold with level-set functions $d_i(\bx)$, $1\leq i \leq m$, was considered. To work on a family of submanifolds $M(s)$, we may consider  level-set functions $d_i(\bx,s)$ that depend on a parameter  $s\in [0,S)$. Such level-set functions do not say anything about the evolution of local patches on $M(s)$ -- the type of information that the intrinsic perspective is equipped with by default because coordinate maps are typically assumed to be \textit{material}. In a broader sense, the variable $s$ here parametrizes only the family of shapes $M(s)$, e.g. the actual submanifold evolution may not be uniform in $s$.

To this end, we now enrich the geometric evolution with the  material perspective by considering
a family ${\Pi}$ of non-intersecting \textit{material paths}, 
and a common parametrizer $t\in (0,T)$ of all paths from ${\Pi}$, called \textit{global time}. 
Within  this extrinsic construction, we say that 
\begin{align}\label{evolving_submanifold}
M(t)=\{\bx \in \mathbb{R}^n : \quad \exists ! \gamma \in \Pi \,,\qquad \bx = \gamma(t)\} 
\end{align}
 is an \textit{evolving submanifold} well-defined by a family of material paths $\Pi$ and global time $t$ if, for every $t\in (0,T)$, the projectors in \eqref{projectors} are properly defined for every $\bx \in M(t)$ and the derivatives in \eqref{basics} - for scalar and vector fields on $M(t)$.
For an ambient fixed tensor field $\tilde{\bT}\in{}\mathcal{\bT}_q(\Omega)$, one may consider the \textit{traces} $\bT\in \mathbb{T}_q(M(t))$ and $\hat{\bT}\in \mathbb{T}_q(\gamma)$, for any $t\in (0,T)$ and any $\gamma\in \Pi$, respectively. The validity of these informal arguments is delicate and  depends on the geometric regularity of $M(t)$ and the analytic regularity of the tensor fields under consideration; see \cite{bonito2020finite} for the codimension one case. The following assumption alleviates this complex matter.
\begin{assump}[submanifold and material     traces]\label{assump:evolving}
Assume $M(t)\subset \Omega$, $t\in(0,T)$, is an evolving submanifold well-defined in the sense of \eqref{evolving_submanifold} by a family of material paths $\Pi$ equipped with the global time $t$. Also, for any ambient $\tilde{\bT}\in \mathbb{T}_q(\Omega)$, there exists for every $t\in(0,T)$ an instantaneous $\bT\in \mathbb{T}_q(M(t))$ and $\hat{\bT}\in \mathbb{T}_q(\gamma)$ satisfying
    \begin{align*}
    \bT[\bx] =
    \hat{\bT}[\bx] = \tilde{\bT}[\bx] \,,\qquad \qquad \forall \bx=\gamma(t)\in M(t) \,.
    \end{align*}
\end{assump}
Recall that  $\gamma\in\Pi$ is an  embedded submanifold in the sense of Section~\ref{sec:FTC} so the tensor calculus of  Section~\ref{sec:calculus} is applicable for $\hat{\bT}\in \mathbb{T}_q(\gamma)$; for example,  for every $\gamma\in {\Pi}$, Definition~\ref{def:cartesian} provides   the directional derivative $\mathcal{D}_{\bw}^{\gamma}\hat{\bT} \in \mathbb{T}_q(\gamma)$ with the \textit{material} velocity, $\bw=\frac{d}{dt}\gamma(t)$, which is tangential, $\bw^T\in \mathbb{PT}_1(\gamma)$. Based on Assumption~\ref{assump:evolving}, one may define the ambient material derivative $\mathcal{D}_\bw^t\tilde\bT(t)\in \mathbb{T}_q(M(t))$ as follows,
 \begin{align}\label{ambient_material}
    \mathcal{D}^t_\bw \tilde{\bT}(t) [\bx]:= \pa_t\tilde{\bT}(t)[\bx]+\mathcal{D}_{\bw}^\gamma\hat{\bT}(t)[\bx]\,, \qquad \forall \bx=\gamma(t)\in M(t)\,.
\end{align}

Alternatively to an ambient $\tilde{\bT}\in\mathbb{T}_q(\Omega)$, one may wish to work with a family of tensor fields $\bT(t)\in\mathbb{T}_q(M(t))$, $t\in (0,T)$, and define the  material derivative using  the family of material paths and the global time as follows.

\begin{definition}[material derivative]\label{def:material_deriv} Given a family $\Pi$ of material paths equipped with global time $t$,   $\mathcal{D}^t_\bw\bT\in\mathbb{T}_q(M(t))$ is defined for a time-dependent $\bT(t)\in \mathbb{T}_q(M(t))$\,, $t\in (0,T)$ as follows,
     \begin{align*}
    \mathcal{D}^t_\bw \bT[\bx]:= \lim_{s\rightarrow{}0+}s^{-1}(\bT(t+s)[\gamma(t+s)]-\bT(t)[\bx])\,, \qquad \forall \bx = \gamma(t) \in M(t)\,.
\end{align*}
\end{definition}
The proof of Theorem~\ref{thm:dirichlet_rate} will utilize the following normal extension from a submanifold to an ambient domain that is well-known for codimension one case, see, e.g., \cite{olshanskii_eulerian_2014}, \cite{bonito2020finite}.
\begin{assump}[normal extension]\label{assump:normal}
In addition to Assumption~\ref{assump:evolving}, for every $t\in (0,T)$ and any ${f}\in \mathbb{T}_0(M(t))$, there exists an instantaneous $f^e(t)\in \mathbb{T}_0(\Omega_\delta(t))$, $M(t)\subset\Omega_\delta(t)\subset \Omega$, satisfying
    \begin{align*}
    f^e(t)[\bx] =
    {f}[\bx]  \,,\quad \nabla f^e(t)[\bx] = \nablaM f[\bx]\,,\qquad \qquad \forall \bx\in M(t)\,. 
    \end{align*}
\end{assump}
Note that it follows from Assumption~\ref{assump:normal} and the recursivity of Definitions~\ref{def:row-represent} and \ref{nablaM}  that any tensor field $\bT\in\mathbb{T}_q(M(t))$ can be extended to an $\bT^e(t)\in\mathbb{T}_q(\Omega_\delta(t))$
\begin{align}\label{extension}
    \bT^e(t)[\bx] =
    {\bT}[\bx]  \,,\quad \nabla \bT^e(t)[\bx] = \nablaM \bT[\bx]\,,\qquad \qquad \forall \bx\in M(t)\,. 
    \end{align}
     Having $\bT^e(t)$ instantaneously, one may compute $\mathcal{D}_\bw^t\bT$ as $\pa_t{\bT^e}+\mathcal{D}_{\bw}^\gamma\hat\bT{}^e$  as shown below.
\begin{proposition}\label{equiv_ambient} The material derivative from Definition~\ref{def:material_deriv} of  $\bT(t)\in \mathbb{\bT}_q(M(t))$ can be expressed via ambient material derivative \eqref{ambient_material} using any instantaneous extension $\tilde{\bT}(t)\in \mathbb{T}_q(\Omega_\delta(t))$, i.e.
    \begin{align*}
        \mathcal{D}_\bw^t\bT[\bx] = \mathcal{D}_\bw^t\tilde{\bT}[\bx]\,, \qquad \forall \bx  \in M(t)\,.
    \end{align*}
\end{proposition}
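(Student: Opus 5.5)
The plan is to compare the two difference quotients along a fixed material path $\gamma\in\Pi$ through $\bx=\gamma(t)$, splitting the increment into a purely temporal part and a purely spatial part. Since $\tilde{\bT}(t+s)$ is an extension of $\bT(t+s)$ and $\gamma(t+s)\in M(t+s)$, we have $\bT(t+s)[\gamma(t+s)]=\tilde{\bT}(t+s)[\gamma(t+s)]$ for small $s>0$. I also use $\bT(t)[\bx]=\tilde{\bT}(t)[\bx]$. Assumption~\ref{assump:evolving} supplies the material trace $\hat{\tilde{\bT}}(t)$ of the frozen ambient field on $\gamma$. With these, the quotient in Definition~\ref{def:material_deriv} becomes
\begin{align*}
s^{-1}\bigl(\tilde{\bT}(t+s)[\gamma(t+s)]-\tilde{\bT}(t)[\gamma(t+s)]\bigr)+s^{-1}\bigl(\tilde{\bT}(t)[\gamma(t+s)]-\tilde{\bT}(t)[\gamma(t)]\bigr)\,.
\end{align*}

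I treat the two terms in order. For the second term the field is frozen at time $t$, so by Definition~\ref{def:cartesian} it converges to $\mathcal{D}^\gamma_\bw\hat{\tilde{\bT}}(t)[\bx]$, where $\bw=\pa_t\gamma(t)$. The first term is rewritten as $\pa_t\tilde{\bT}(\xi)[\gamma(t+s)]$ by the mean value theorem, applied componentwise through the recursive row representation so that everything reduces to scalars, as in Definitions~\ref{def:row-represent} and \ref{def:integral}. Under continuity of $\pa_t\tilde{\bT}$ in $(t,\bx)$ this converges to $\pa_t\tilde{\bT}(t)[\bx]$. Adding the two limits reproduces \eqref{ambient_material}.

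\textbf{Main obstacle.} The difficulty is the domain of the first term. The extension $\tilde{\bT}(t)$ lives only on $\Omega_\delta(t)$, so $\tilde{\bT}(t)[\gamma(t+s)]$ requires $\gamma(t+s)\in\Omega_\delta(t)$. This holds for small $s$ by continuity of $\gamma$ and openness of the tubes. More seriously, $\pa_t\tilde{\bT}$ only makes sense if the instantaneous extensions are chosen jointly regular in time, for instance the normal extensions of Assumption~\ref{assump:normal} varying smoothly with $t$. I would state this regularity explicitly as part of ``instantaneous extension''.

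Independence of the particular extension then follows from the result itself, since the left-hand side does not involve $\tilde{\bT}$. It can also be checked directly: two extensions differ by a field vanishing on every $M(t)$, and both the time derivative and the path derivative of such a field sum to zero along $\gamma$.
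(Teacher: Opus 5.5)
Your argument is correct, and it reaches the result by a more self-contained route than the paper. The paper's proof is two lines. It reads both sides along each material path $\gamma$, cites the scalar case $q=0$ as well known (Dziuk--Elliott), and lifts it to all ranks through the recursivity in $q$ of Definition~\ref{def:material_deriv} and \eqref{ambient_material}. You instead prove the identity directly at arbitrary rank. You split the increment $\bT(t+s)[\gamma(t+s)]-\bT(t)[\bx]$ into a frozen-position temporal increment and a frozen-time spatial increment, which supplies the chain-rule argument that the paper only cites. The row representation enters your proof only to make the mean value theorem scalar. Even that can be avoided by writing the first term as $s^{-1}\int_t^{t+s}\pa_t\tilde{\bT}(\tau)[\gamma(t+s)]\,d\tau$. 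The paper's route buys brevity and uniformity with its recursive framework. Yours buys visibility of the hypotheses hidden in the word ``instantaneous'': the family $\tilde{\bT}(t)$ must be $C^1$ jointly in time and space on an open space-time neighbourhood of $(t,\bx)$. Knowing $\gamma(t+s)\in\Omega_\delta(t)$ is not enough, because your mean value step evaluates $\tilde{\bT}(\tau)$ at $\gamma(t+s)$ for every $\tau\in[t,t+s]$. A choice made separately for each $t$, such as the normal extension of Assumption~\ref{assump:normal}, need not satisfy this. The same caveat applies to the paper's later use of $\mathcal{D}^t_\bw$ on $\bT^e(t)$ in the proof of Theorem~\ref{thm:dirichlet_rate}, so stating this regularity explicitly, as you propose, is the right call.
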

\begin{proof}
Note that the left- and the right-hand sides can be understood as restrictions to each $\gamma\in\Pi$. Their equality for scalars, $q=0$, is a well-known \cite{dziuk2013l2}. Definition~\ref{def:material_deriv} and \eqref{ambient_material} are recursive in $q$ yielding the claim.
\end{proof}

\subsection{Commutators of submanifold, material and ambient derivatives}
We now proceed by computing the material derivative of the projector $\bP$ generalizing \cite{jankuhn_incompressible_2018}.

\begin{proposition}[commutator of the material derivative and the Cartesian gradient]\label{p:commutator} Consider $M(t)$, $t\in (0,T)$, evolving with the material velocity $\bw^T\equiv{}\bw^T(t)\in\mathbb{T}_1(\Omega)$. For any $\bT\equiv\bT(t)\in\mathbb{T}_q(\Omega)$, $q\geq 0$, we have on $M(t)$ that
    \begin{align}
        \nabla{}(\mathcal{D}^t_\bw \bT)-\mathcal{D}^t_\bw{(\nabla{}\bT)}  = 
        \nabla \bT \bigcirc  \nabla\bw^T\,.
    \end{align}
    with  the operation  $\bT \bigcirc \bS \in \mathbb{T}_{q+s-2}$ given recursively for a $\bT\in \mathbb{T}_q$, $q\geq 1$, and $\bS\in \mathbb{T}_s$, $s\geq 1$, by
    \begin{align}\label{bigcurc}
        \bT \bigcirc \bS = \{\bT^*\bigcirc\bS\}\,,\qquad  \bu^T \bigcirc \bS =  \bu^T:\bS\,.
    \end{align}
\end{proposition}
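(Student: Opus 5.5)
Since $\bT$ is an ambient field in $\mathbb{T}_q(\Omega)$, the material derivative on $M(t)$ is the ambient one \eqref{ambient_material}: $\mathcal{D}^t_\bw\bT=\pa_t\bT+\mathcal{D}^\gamma_\bw\bT$. For ambient fields the path derivative along $\gamma$ with velocity $\bw$ equals the Cartesian directional derivative, $\mathcal{D}^\gamma_\bw\bT=\nabla\bT\cdot\bw$. Here $\nabla$ denotes the ambient Cartesian gradient, that is, Definition~\ref{nablaM} with $\bP=\bI$ and $M=\Omega$, so that $\nabla\bT=\{\nabla\bT^*\}$ by Proposition~\ref{nablaM_rowrep}. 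Hence, in a neighbourhood of $M(t)$ (using $\bw^T\in\mathbb{T}_1(\Omega)$),
\begin{align*}
\mathcal{D}^t_\bw\bT=\pa_t\bT+\nabla\bT\cdot\bw\,,\qquad \mathcal{D}^t_\bw(\nabla\bT)=\pa_t\nabla\bT+(\nabla\nabla\bT)\cdot\bw\,.
\end{align*}
The proof will take the Cartesian gradient of the first expression and subtract the second.

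I would argue by induction on $q$, matching the recursive definition \eqref{bigcurc}. All operators involved are row-wise: $\nabla$, $\mathcal{D}^t_\bw$ (recursive by Definition~\ref{def:material_deriv} and \eqref{ambient_material}), and $\bigcirc$ in its first argument. Therefore, for $q\geq1$,
\begin{align*}
\nabla(\mathcal{D}^t_\bw\bT)-\mathcal{D}^t_\bw(\nabla\bT)=\{\nabla(\mathcal{D}^t_\bw\bT^*)-\mathcal{D}^t_\bw(\nabla\bT^*)\}=\{\nabla\bT^*\bigcirc\nabla\bw^T\}=\nabla\bT\bigcirc\nabla\bw^T\,,
\end{align*}
using $(\nabla\bT)^*=\nabla(\bT^*)$ in the last step. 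It thus suffices to prove the scalar case $q=0$, where $\nabla f\in\mathbb{T}_1$ and $\nabla f\bigcirc\nabla\bw^T=\nabla f:\nabla\bw^T=\nabla\bw^T(\nabla f)$, a covector.

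For the scalar base case, $\pa_t$ commutes with $\nabla$ because both are ambient derivatives on a fixed domain and the fields are assumed smooth. The product rule of Proposition~\ref{product_rules} (with $\bP=\bI$), applied to $\nabla f\cdot\bw=\nabla f\odot\bw^T$, gives
\begin{align*}
\nabla(\nabla f\odot\bw^T)=\bw^T:\nabla\nabla f+\nabla f:\nabla\bw^T\,.
\end{align*}
The second term is exactly the claimed right-hand side. It remains to identify $\bw^T:\nabla\nabla f=(\nabla\nabla f)(\bw)$ with $(\nabla\nabla f)\cdot\bw$, which follows from the symmetry of the Cartesian Hessian (equality of mixed partials). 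This symmetry is the only nontrivial point. It is also the reason the identity is stated for ambient fields and with the full $\nabla$: for $\nablaM$ the analogous second derivative is not symmetric, and the $\nablaM^{\mathrm{cov}}$ correction appearing in Theorem~\ref{thm:dirichlet_rate} would arise instead.

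The main obstacle I expect is justifying that $\mathcal{D}^\gamma_\bw$ acting on the trace of an ambient field coincides with $\nabla\bT\cdot\bw$ evaluated at points of $M(t)$. This is a chain rule along the $C^1$ path $\gamma$, applied componentwise via Definition~\ref{def:cartesian}. The same expression must also hold for $\nabla\bT$ in place of $\bT$. Both follow under Assumption~\ref{assump:evolving}, given enough smoothness for the second Cartesian derivatives of $\bT$ to exist and be continuous.
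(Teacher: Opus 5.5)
Your proposal is correct and follows the paper's proof essentially step for step. You prove the scalar base case using the product rule of Proposition~\ref{product_rules} on $\Omega$, the symmetry of the Cartesian Hessian (to identify $\bw^T:\nabla\nabla f$ with $(\nabla\nabla f)\cdot\bw$), and the commutation of $\pa_t$ with $\nabla$, and then carry out the row-wise induction that matches the recursive definition \eqref{bigcurc}. Your two explicit remarks state precisely what the paper uses only implicitly: that $\mathcal{D}^t_\bw\bT=\pa_t\bT+\nabla\bT\cdot\bw$ must be taken as an ambient field near $M(t)$ so that its Cartesian gradient makes sense, and that $\mathcal{D}^\gamma_\bw$ of a trace reduces to $\nabla\bT\cdot\bw$ by the chain rule.
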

\begin{proof}  Base of induction for $\nabla f\in \mathbb{T}_1(\Omega)$. Proposition~\ref{product_rules} for $\Omega$ with $\bu^T\equiv \nabla f$ and $\bv^T\equiv\bw^T$ gives 
\begin{align*}
    \nabla(\nabla f\odot\bw^T)=\bw^T:(\nabla(\nabla f))+\nabla f\bigcirc{}\nabla\bw^T\,,
\end{align*}
and due to symmetry  $\bw^T:(\nabla(\nabla f))=\nabla(\nabla{f})\cdot\bw$ and independence of $\pa_t$ and $\nabla$ operators,
    \begin{align}
   \pa_t(\nabla f) +\nabla{(\nabla{}f)}\cdot\bw=\nabla(\pa_t f+\nabla f \odot \bw^T)-\nabla{}f:\nabla \bw^T\,,
    \end{align}
    thus completing the base. The induction step follows similarly as  Definition~\ref{def:cartesian} is recursive,
\begin{align}
        \mathcal{D}^t_\bw{(\nabla{}\bT)}= \{\mathcal{D}^t_\bw{(\nabla{}(\bT^*))}\}=\{\nabla{}(\mathcal{D}^t_\bw (\bT^*))\}
        -\{\nabla \bT^*\bigcirc \nabla\bw^T\}\,,
    \end{align}
    where the last term justifies the definition of the  binary operation in \eqref{bigcurc}.
\end{proof}

\begin{remark}
    The tensor operation $\bT\bigcirc\bS \in \mathbb{T}_2$ for rank two tensors $\bT,\bS\in \mathbb{T}_2$  corresponds to the matrix-matrix product $TS$ of matrices $T$ and $S$ representing them. Using the recursive tree analogy of Remark~\ref{tree_analogy} for general rank tensors, one says that the operation $\bigcirc$ right-contracts with $\bS$ every rank one leaf  $\bu^T$  of $\bT$ and replaces it with the tree of the result, $\bu^T:\bS$.
\end{remark}
\begin{proposition}[further properties of $\bigcirc$]\label{nablaMtonabla}
    For $\bT\in\mathbb{T}_q(\Omega)$ and any $M\subset \Omega$, we have with the operation \eqref{bigcurc}  that
    \begin{align*}
        \nablaM\bT = \nabla \bT \bigcirc \bP\,,\qquad \forall\bx\in M\,.
    \end{align*}
    Additionally, the operation \eqref{bigcurc} is associative, i.e. for any $\bT\in \mathbb{T}_q$, $\bS\in \mathbb{T}_s$,  $\bR\in \mathbb{T}_r$, it holds that
    \begin{align*}
    (\bT\bigcirc\bS)\bigcirc \bR =  \bT\bigcirc(\bS\bigcirc \bR) 
\end{align*}
for every $q,s,r\geq 1$ such that both sides are well-defined. 
\end{proposition}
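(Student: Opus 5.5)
Both claims will be proved by induction on the rank of the leftmost tensor $\bT$, using the recursive Definition~\eqref{bigcurc} of $\bigcirc$.

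\emph{First identity.} For the base $q=0$, a scalar $f$ has $\nabla f = f_{,*}$ read as a row $\bu^T$ with $\bu=(\nabla f)^T$. Then $\nabla f\bigcirc\bP = \bu^T:\bP = \bP(\bu)$ by \eqref{vector-covector_insert}. By Proposition~\ref{insertion_rowrep} and \eqref{projection-row-represent}, this equals $\sum_*\bu^*\bP_*^T$, and the latter is exactly the covector of $(\nabla f)\bP=\nablaM f$ from \eqref{basics}, using the symmetry of $\bP$.

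For the induction step $q\geq 1$ we have $(\nabla\bT)^*=\nabla(\bT^*)$; this is the Cartesian analogue of Proposition~\ref{nablaM_rowrep}, obtained with $M=\Omega$ and $\bP=\bI$. Hence
\begin{align*}
\nabla\bT\bigcirc\bP=\{(\nabla\bT)^*\bigcirc\bP\}=\{\nabla(\bT^*)\bigcirc\bP\}=\{\nablaM(\bT^*)\}=\nablaM\bT ,
\end{align*}
where the third equality is the induction hypothesis and the last is Proposition~\ref{nablaM_rowrep}. At a point $\bx\in M$ this holds for any ambient $\bT$, since $\nablaM$ only sees traces.

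\emph{Associativity.} I would first prove the auxiliary identity
\begin{align*}
(\bu^T:\bS)\bigcirc\bR=\bu^T:(\bS\bigcirc\bR)
\end{align*}
for $\bu^T\in\mathbb{T}_1$ and $\bS\in\mathbb{T}_s$, $s\geq 1$. Since $\bu^T:\bS=\bS(\bu)=\sum_*\bu^*\bS^*$, the left side equals $\sum_*\bu^*(\bS^*\bigcirc\bR)$, because $\bigcirc$ is linear in its first argument; linearity is immediate from the recursion together with linearity of $:$. The right side is $(\bS\bigcirc\bR)(\bu)=\sum_*\bu^*(\bS\bigcirc\bR)^*=\sum_*\bu^*(\bS^*\bigcirc\bR)$, so the two sides agree. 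One must watch the case $s=1$: there $\bS=\bv^T$, $\bu^T:\bS=\bu\cdot\bv$ is a scalar, and "$\text{scalar}\bigcirc\bR$" is not defined. This is exactly where the hypothesis "both sides well-defined" enters, so I require $s\geq 2$ in the lemma, or else treat scalars by interpreting $f\bigcirc\bR$ as $f\bR$ consistently.

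With the lemma in hand, associativity follows by induction on $q$. For the base $q=1$, $(\bu^T\bigcirc\bS)\bigcirc\bR=(\bu^T:\bS)\bigcirc\bR=\bu^T:(\bS\bigcirc\bR)=\bu^T\bigcirc(\bS\bigcirc\bR)$. For the step,
\begin{align*}
(\bT\bigcirc\bS)\bigcirc\bR=\{(\bT^*\bigcirc\bS)\bigcirc\bR\}=\{\bT^*\bigcirc(\bS\bigcirc\bR)\}=\bT\bigcirc(\bS\bigcirc\bR),
\end{align*}
using that $(\bT\bigcirc\bS)^*=\bT^*\bigcirc\bS$ by definition.

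The main obstacle is the bookkeeping of ranks in the lemma, namely ensuring every intermediate expression is well-defined when $s=1$ or $r=1$. The base case of the first identity additionally requires carefully identifying the matrix-row product in \eqref{basics} with the left insertion into $\bP$.
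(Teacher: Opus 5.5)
Your proof follows the paper's own proof essentially step for step. The first identity uses the same row recursion $\nabla\bT\bigcirc\bP=\{\nabla(\bT^*)\bigcirc\bP\}=\{\nablaM(\bT^*)\}$ on top of the base $\nablaM f=\nabla f:\bP$. Associativity uses the same reduction to the leaves $\bu^T$ of $\bT$ via $(\bu^T:\bS)\bigcirc\bR=\bu^T:(\bS\bigcirc\bR)$. For $s\geq 2$ and any $q,r\geq 1$ every step you write is valid. This includes the rank bookkeeping in the induction step: $\bT\bigcirc\bS$ has rank $q+s-2\geq 2$, so the outer $\bigcirc$ really recurses.

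The flaw is your claim that the restriction $s\geq 2$ is ``exactly'' what the hypothesis that both sides be well-defined enforces. It is not. For $s=1$ and $q,r\geq 2$, both sides are well-defined by \eqref{bigcurc}. Your induction does not cover this case: for $q=2$, $\bT\bigcirc\bS\in\mathbb{T}_1$, so the outer $\bigcirc$ is a leaf case, and $\bT^*\bigcirc\bS$ is a scalar. No argument can cover it, because the identity is false there. Take $\bT=\be^x\otimes\be^x=\{\be^x,0,\dots,0\}$, $\bS=\be^x$ and $\bR=\be^x\otimes\be^y$. Then
\[
(\bT\bigcirc\bS)\bigcirc\bR=\be^x\bigcirc\bR=\bR(\be_x)=\be^y\,,\qquad \bT\bigcirc(\bS\bigcirc\bR)=\bT\bigcirc\be^y=\{\be^x:\be^y,0,\dots,0\}=0\,.
\]
In matrix language, a rank-one middle factor acts as a column on its left neighbour and as a row on its right neighbour, so you get $R^{\top}Tv$ versus $TR^{\top}v$.

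Your fallback convention $f\bigcirc\bR:=f\bR$ does not help either. It is incompatible with the rank count $q+s-2$. Already for $q=s=1$ and $r\geq 2$ it gives the rank-$r$ tensor $(\bu\cdot\bv)\bR$ on the left and the rank-$(r-2)$ tensor $\bu^T:\bR(\bv)$ on the right.

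So what you have actually proved, and what is true, is associativity under the explicit hypothesis $s\geq 2$. State it that way rather than attributing the restriction to well-definedness. The paper's own leaf step $(\sum_*\bu^*:\bS^*)\bigcirc\bR=\sum_*\bu^*\{\bS^*\bigcirc\bR\}$ also silently needs $s\geq 2$. That is the only case used later: in Lemma~\ref{l:surface_sonnet_virga} the middle factor is $\nabla\bw^T$, of rank two.
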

\begin{proof}
    The base of induction, $\nablaM f =\nabla f:\bP=\nabla f \bigcirc\bP$, follows from \eqref{basics} and \eqref{bigcurc}. For $q\geq 1$,
    \begin{align*}
      \nablaM\bT = \{\nablaM\bT^*\}=  \{\nabla \bT^* \bigcirc \bP \}= \nabla \bT \bigcirc \bP\,
    \end{align*}
    where we used the recursivity of Definition~\ref{nablaM}, the induction assumption, and  \eqref{bigcurc}.

    To  show associativity, \eqref{bigcurc} implies that $(\bT\bigcirc\bS)\bigcirc \bR \in \mathbb{T}_{q+s+r-4}$ is equal to
\begin{align*}
    \{((\bT\bigcirc\bS)\bigcirc \bR)^*\} = \{(\bT\bigcirc\bR)^*\bigcirc \bR\}  = \{(\bT^*\bigcirc\bS)\bigcirc \bR\} \,,
\end{align*}
which, upon recursion down to the leafs $\bu^T\in \mathbb{T}_1$ of $\bT$, gives with Definition~\ref{def:left_right_contraction},
\begin{align*}
(\bu^T\bigcirc\bS)\bigcirc\bR= (\sum_*\bu^*:\bS^*)\bigcirc\bR=\sum_*\bu^*\{\bS^*\bigcirc\bR\} = \bu^T:(\bS\bigcirc\bR)= \bu^T\bigcirc(\bS\bigcirc\bR)\,,
\end{align*}
where $\bS\bigcirc\bR \in \mathbb{T}_{s+r-2}$ is the same for every leaf $\bu^T$. The tensor $\bu^T\bigcirc(\bS\bigcirc \bR)\in{}\mathbb{T}_{s+r-3}$ is exactly what one gets recursively in
$\bT\bigcirc(\bS\bigcirc \bR) = \{\bT^*\bigcirc(\bS\bigcirc \bR)\} 
$.
\end{proof}

The next lemma demonstrates that the operators in \eqref{bigcurc} and Definition~\ref{nablaM} interact as leaf children of $\nablaM \bT$  incorporate their  tangency into a rank two tensor $\bA$.
\begin{lemma}\label{vanishing_Cw} For any $\bT, \bS\in \mathbb{T}_q(M)$, $q\geq 0$, and any $\bA \in \mathbb{T}_2(M)$ we have
    \begin{align*}
        (\nablaM \bT \bigcirc \bA) \odot \nablaM\bS  = (\nablaM \bT \bigcirc \mathbb{P}(\bA)) \odot \nablaM\bS\,.
    \end{align*}
\end{lemma}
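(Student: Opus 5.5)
The plan is to reduce everything to rank-two manipulations with the projector, using three ingredients: Proposition~\ref{nablaMtonabla} (both the identity $\nablaM\bT=\nabla\bT\bigcirc\bP$ and associativity of $\bigcirc$), the symmetry of $\bP$, and the matrix interpretation of $\bigcirc$ on $\mathbb{T}_2$.

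First I write $\nablaM\bT\bigcirc\bA=(\nabla\bT\bigcirc\bP)\bigcirc\bA=\nabla\bT\bigcirc(\bP\bigcirc\bA)$. This is the left projection of $\bA$, in matrix language $PA$.

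To produce the right projection, I prove an auxiliary \emph{transfer identity}: for $\bX,\bY\in\mathbb{T}_{r}$ with $r\geq 1$,
\begin{align*}
\bX\odot(\bY\bigcirc\bP)=(\bX\bigcirc\bP)\odot\bY\,.
\end{align*}
The proof is by induction on $r$, using the recursions in \eqref{bigcurc} and Definition~\ref{def:inner_product}. The base case $r=1$ reads $\bu^T\odot(\bv^T:\bP)=\bu\cdot\bP\bv=\bP\bu\cdot\bv=(\bu^T:\bP)\odot\bv^T$, which uses only the symmetry of $\bP$ from \eqref{projection-row-represent}. The induction step is immediate, since both sides reduce to $\sum_*\bX^*\odot(\bY^*\bigcirc\bP)$.

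Next I use $\nablaM\bS=\nabla\bS\bigcirc\bP$ together with $\bP\bigcirc\bP=\bP$ (idempotency, again a rank-two check). This gives $\nablaM\bS=\nablaM\bS\bigcirc\bP$, and the transfer identity then yields
\begin{align*}
(\nablaM\bT\bigcirc\bA)\odot\nablaM\bS=\big((\nablaM\bT\bigcirc\bA)\bigcirc\bP\big)\odot\nablaM\bS=\big(\nabla\bT\bigcirc(\bP\bigcirc\bA\bigcirc\bP)\big)\odot\nablaM\bS\,,
\end{align*}
where associativity has been applied once more. The same manipulation applied to $\mathbb{P}\bA$ in place of $\bA$ produces $\nabla\bT\bigcirc(\bP\bigcirc\mathbb{P}\bA\bigcirc\bP)$. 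It therefore suffices to show that $\bP\bigcirc\bA\bigcirc\bP=\mathbb{P}\bA$ for every $\bA\in\mathbb{T}_2$; idempotency of $\mathbb{P}$ then makes the two expressions equal.

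The main obstacle is this rank-two identity $\bP\bigcirc\bA\bigcirc\bP=\mathbb{P}\bA$, because the row and insertion conventions must be tracked carefully. I will verify it by evaluating both sides on $(\bv_1,\bv_2)$, using Proposition~\ref{insertion_rowrep} and \eqref{vector-covector_insert}. For $\bu^T\in\mathbb{T}_1$ and $\bB\in\mathbb{T}_2$, we have $(\bu^T\bigcirc\bB)(\bv)=\sum_*\bu^*\bB^*(\bv)=\bB(\bu,\bv)$. Consequently $(\bT\bigcirc\bB)(\bv_1,\bv_2)=\bB(\bT\cdot\,\cdot\,)$, which is matrix multiplication. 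From this, $(\bP\bigcirc\bA\bigcirc\bP)(\bv_1,\bv_2)=\bA(\bP\bv_1,\bP\bv_2)$ follows from $\bP^*=\bP_*^T$ and the symmetry of $\bP$. This is exactly $\mathbb{P}\bA$ by Definition~\ref{def:tangency}.

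Finally, the case $q=0$, where $\nablaM f$ is a covector, is covered by the same argument with $r=1$. Alternatively it follows directly from $\nablaM f\bigcirc\bA=\nablaM f:\bA$.
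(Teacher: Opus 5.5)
Your argument is correct, but it takes a different route from the paper.

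\textbf{The paper's proof.} The paper argues by a direct induction on $q$. The base case $q=0$ uses only that $\nablaM f$ and $\nablaM g$ are tangent covectors, so $(\nablaM f:\bA)\odot\nablaM g=\bA(\nablaM f,\nablaM g)$ is unchanged when both slots are projected. The induction step descends to row components via $(\nablaM\bT)^*=\nablaM(\bT^*)$ and $(\nablaM\bT\bigcirc\bA)^*=\nablaM(\bT^*)\bigcirc\bA$.

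\textbf{Your proof.} You factor the expression globally instead. The operation $\bigcirc\bP$ acts as right multiplication by $P$ on every leaf, and your transfer identity moves it across $\odot$ using the symmetry of $\bP$. Associativity of $\bigcirc$ then reduces everything to the single rank-two identity $\bP\bigcirc\bA\bigcirc\bP=\mathbb{P}(\bA)$, i.e.\ $PAP$ in matrix language. This makes the mechanism more transparent than the leaf-by-leaf induction. The price is more auxiliary facts: associativity, the transfer identity, and idempotency. You only invoke associativity with a rank-two middle factor ($\bP$ or $\bP\bigcirc\bA$), which is the case where it is unproblematic.

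\textbf{Avoiding an extension.} Proposition~\ref{nablaMtonabla} is stated for ambient fields in $\mathbb{T}_q(\Omega)$, whereas here $\bT,\bS\in\mathbb{T}_q(M)$. Writing $\nabla\bT$ and $\nabla\bS$ therefore tacitly uses an extension off $M$. You can avoid this as follows.
\begin{itemize}
\item By induction on the recursion \eqref{bigcurc}, $(\mathbf{X}\bigcirc\bP)\cdot\bv=\mathbf{X}\cdot(\bP\bv)$ for every $\mathbf{X}$ of rank at least one.
\item Definition~\ref{nablaM} gives $\nablaM\bT\cdot\bv=\nablaM\bT\cdot(\bP\bv)$.
\item Proposition~\ref{insertion_rowrep} then yields $\nablaM\bT=\nablaM\bT\bigcirc\bP$ directly, and likewise for $\bS$.
\end{itemize}
With this, your chain reads $(\nablaM\bT\bigcirc\bA)\odot\nablaM\bS=\big(\nablaM\bT\bigcirc(\bP\bigcirc\bA\bigcirc\bP)\big)\odot\nablaM\bS=(\nablaM\bT\bigcirc\mathbb{P}(\bA))\odot\nablaM\bS$. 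This also removes the need to run the argument a second time for $\mathbb{P}(\bA)$ and to invoke idempotency of $\mathbb{P}$.

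\textbf{A notational slip.} Your displayed rule $(\bT\bigcirc\bB)(\bv_1,\bv_2)=\bB(\bT\cdot\,\cdot\,)$ is garbled. What you actually use is $(\bT\bigcirc\bB)(\bv_1,\bv_2)=\bB(\bT(\bv_1),\bv_2)$, with the covector $\bT(\bv_1)$ read as a vector.
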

\begin{proof} According to Definition~\ref{def:tangency} and \eqref{vector-covector_insert}, $\bA(\bP\bu,\bP\bv)=\mathbb{P}(\bA)(\bu,\bv)$ and $\bA(\bu,\bv)=(\bu^T:\bA)(\bv)$ hold for any $\bu, \bv \in \mathbb{R}^n$. Since $\nablaM f, \nablaM g \in \mathbb{PT}_1(M)$ one obtains the base of induction  from \eqref{bigcurc},
\begin{align*}
        (\nablaM f \bigcirc \bA) \odot \nablaM g  =  (\nablaM f : \bA)\odot  \nablaM g = (\nablaM f : \mathbb{P}(\bA))\odot  \nablaM g\,.
\end{align*}
The induction step can be shown using  the recursive Definitions~\ref{def:inner_product}, \ref{nablaM} and \eqref{bigcurc} as follows,
\begin{align*}
        (\nablaM \bT \bigcirc \bA)\odot \nablaM\bS  &= \textstyle{\sum_*} (\nablaM \bT \bigcirc \bA)^*\odot (\nablaM\bS)^* = \textstyle{\sum_*}(\nablaM (\bT^*) \bigcirc \bA)\odot  \nablaM(\bS^*)  \\
        &= \textstyle{\sum_*}(\nablaM (\bT^*)\bigcirc \mathbb{P}(\bA))\odot  \nablaM(\bS^*) =(\nablaM \bT \bigcirc \mathbb{P}(\bA))\odot \nablaM\bS\,,
        \end{align*}
        where the induction assumption is applied in the second line.
\end{proof}

\begin{lemma}\label{p:material_n_P}Assume $|d_i(\bx,t)|=1$, for all $1\leq i\leq m$. Let $M(t)$ evolve with material $\bw$, then
    \begin{align*}
    \mathcal{D}^t_\bw{\bn}_i^T &=
    -\bn_i^T : \nabla\bw^T\,,
        \\
        \mathcal{D}^t_\bw{\bP}&=-\sum_{i=1}^m (\mathcal{D}^t_\bw{\bn_i}\otimes\bn_i+\bn_i\otimes\mathcal{D}^t_\bw{\bn_i})
      =:-2\bC[\bw] \,.
    \end{align*}
    Moreover, $\bC[\bw]\in \mathbb{T}_2(M)$ has a vanishing tangential component, i.e. $\mathbb{P}(\bC[\bw])=0$.
\end{lemma}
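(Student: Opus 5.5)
The plan is to reduce everything to the scalar level-set functions $d_i$ and then apply the commutator of Proposition~\ref{p:commutator} with $q=0$. I read the hypothesis as the normalization $\|\nabla d_i\|=1$ near $M(t)$. Then $\bn_i^T=\nabla d_i\in\mathbb{T}_1(\Omega_\delta)$ exactly, with no division by a norm.

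\emph{First identity.} Proposition~\ref{p:commutator} with $\bT\equiv d_i(\cdot,t)$ and definition \eqref{bigcurc} give
\begin{align*}
\mathcal{D}^t_\bw(\nabla d_i)=\nabla(\mathcal{D}^t_\bw d_i)-\nabla d_i\bigcirc\nabla\bw^T=\nabla(\mathcal{D}^t_\bw d_i)-\bn_i^T:\nabla\bw^T .
\end{align*}
It remains to show $\nabla(\mathcal{D}^t_\bw d_i)=0$ on $M(t)$. Here I use the material description \eqref{evolving_submanifold}: the level sets of $d_i(\cdot,t)$ are transported by the (extended) material velocity $\bw$, so $d_i(\gamma(t),t)$ is constant along every material path. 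By \eqref{ambient_material} this means $\mathcal{D}^t_\bw d_i=\pa_t d_i+\nabla d_i\cdot\bw=0$ in a neighbourhood of $M(t)$, and hence its Cartesian gradient vanishes there.

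This is the step I expect to be the main obstacle. If one only knows $d_i=0$ on $M(t)$, then $\mathcal{D}^t_\bw d_i$ vanishes on $M(t)$ alone. Its tangential gradient then vanishes, but a normal part $\sum_j c_{ij}\bn_j^T$ survives. The unit-length condition only gives $\mathcal{D}^t_\bw\bn_i\cdot\bn_i=0$, which does not kill all the $c_{ij}$. So I would make explicit that the $d_i$ are chosen (e.g.\ transported with the flow, consistent with Assumption~\ref{assump:normal}) so that the whole tube $\Omega_\delta(t)$ is material. Failing that, I would state the identity modulo normal terms, which still suffices for the tangential statement at the end.

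\emph{Second identity.} By \eqref{projection-row-represent}, $\bP=\bI-\sum_i\bn_i^T\otimes\bn_i^T$, and $\mathcal{D}^t_\bw\bI=0$ since $\bI$ is constant. Because Definition~\ref{def:material_deriv} and \eqref{ambient_material} are recursive and componentwise, the material derivative obeys the Leibniz rule for $\otimes$. This is proved exactly like the $\nablaM(\bS\otimes\bT)$ rule in Proposition~\ref{product_rules}, by induction from the scalar product rule via Proposition~\ref{outer_represent}. Applying it gives $\mathcal{D}^t_\bw\bP=-\sum_i(\mathcal{D}^t_\bw\bn_i^T\otimes\bn_i^T+\bn_i^T\otimes\mathcal{D}^t_\bw\bn_i^T)=-2\bC[\bw]$.

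\emph{Tangential part of $\bC$.} By Definition~\ref{def:tangency}, $\mathbb{P}(\bu^T\otimes\bv^T)(\ba,\bb)=(\bu\cdot\bP\ba)(\bv\cdot\bP\bb)$. Each summand of $\bC$ contains a factor $\bn_i^T$ in one slot, and $\bn_i\cdot\bP\bb=(\bP\bn_i)\cdot\bb=0$ because $\bP$ is symmetric. Hence every summand has zero projection, so $\mathbb{P}(\bC[\bw])=0$ by linearity. This last part does not need the first identity at all.
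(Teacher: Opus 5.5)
Your proposal is correct and follows the paper's proof step for step: Proposition~\ref{p:commutator} with $q=0$ applied to $d_i$, then the Leibniz rule for $\bP=\bI-\sum_i\bn_i^T\otimes\bn_i^T$, then the observation that every summand of $\bC[\bw]$ carries a factor $\bn_i^T$ that $\bP$ annihilates. The step you flag is exactly where the paper is silent, since it passes from ``$d_i$ vanishes on $M(t)$'' straight to $\nabla(\mathcal{D}^t_\bw d_i)=0$, and your caution is justified: the right-hand side $-\bn_i^T:\nabla\bw^T$ depends on the normal derivatives of the ambient extension of $\bw$ while the left-hand side does not (a static $M$ in codimension one with the extension $\bw=d\,\bn$ gives $0$ on the left and $-\bn^T$ on the right), so a compatible choice like yours is genuinely needed for the first identity (in codimension one, signed distance with $\bw$ extended constantly along normals works and yields $-\bn^T:\nablaM\bw^T$), whereas the second identity and $\mathbb{P}(\bC[\bw])=0$, which are all that Lemma~\ref{l:surface_sonnet_virga} and Theorem~\ref{thm:dirichlet_rate} use, hold unconditionally, as you note.
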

\begin{proof} 
Since  $\bn_i = \nabla d_i$ and $d_i=d_i(x,y,...,z,t)$ vanishes on $M(t)$, $\mathcal{D}^t_\bw d_i=0$, and by Corollary~\ref{p:commutator}, 
 \begin{align}
        \mathcal{D}^t_\bw{\bn_i^T}=-\bn_i^T : \nabla\bw^T\,.
    \end{align}
    Since $\bP=\bI-\bN$ and  
$\mathcal{D}^t_\bw{(\bn_i^T\otimes\bn_i^T)}=\mathcal{D}^t_\bw{\bn_i^T}\otimes\bn_i^T+\bn_i^T\otimes\mathcal{D}^t_\bw{\bn_i^T}$  by Proposition~\ref{product_rules}, one obtains $2\bC[\bw]=\mathcal{D}^t_\bw \bN =-\mathcal{D}^t_\bw{\bP}$. Similarly, $\bn_i^T\odot\mathcal{D}^t_\bw{\bn_i^T}=\mathcal{D}^t_\bw{(\bn_i^T\odot \bn^T_i)}=\mathcal{D}^t_\bw{1}=0$. Note that  for $m=1$ this implies $\mathbb{P}(\mathcal{D}^t_\bw\bn_i^T)=0$; however, for a general codimension $m>1$ there could be other $\bn^T_j$, $j\neq i$ such that $\bn_j^T\odot \mathcal{D}^t_\bw\bn_i^T \neq 0$.

By Definition~\ref{def:tangency}, 
$\mathbb{P}(\bC[\bw])(\bu,\bv)=\bC[\bw](\bP\bu,\bP\bv)$ for any $\bu,\bv\in\mathbb{R}^n$. According to  Definition~\ref{outer_represent}, we have for every $1\leq i \leq m$ that
\begin{align*}
(\bn_i^T\otimes\mathcal{D}^t_\bw{\bn_i^T}) (\bP\bu,\bP\bv) = \left(\bn_i^T (\bP\bu)\right) \left(\mathcal{D}^t_\bw{\bn_i^T}(\bP\bv)\right) = 0\,,
\end{align*}
 since $\bP\bu\cdot \bn_i =0$. Proceeding similarly for $\mathcal{D}^t_\bw{\bn_i^T}\otimes{\bn_i^T}$ and $\mathbb{P}(\bC[\bw])=0$ as claimed. 
\end{proof}
\begin{lemma}[commutator of the material derivative and  the submanifold gradient]\label{l:surface_sonnet_virga} Consider $M(t)\subset \Omega$ evolving with the material velocity $\bw^T\equiv{}\bw^T(t)\in\mathbb{T}_1(\Omega)$. For any $\bT=\bT(t)\in\mathbb{T}_q(\Omega)$, $q\geq 0$, we have with \eqref{bigcurc} that
  \begin{align}  \nablaM(\mathcal{D}^t_\bw{\bT})-\mathcal{D}^t_\bw{(\nablaM \bT)}=\nabla \bT\bigcirc(2\textbf{}\bC[\bw]
  +\nablaM\bw^T)\,.
  \end{align}
\end{lemma}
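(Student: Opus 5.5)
The plan is to reduce everything to the two commutator facts already available: Proposition~\ref{nablaMtonabla}, which gives $\nablaM\bT=\nabla\bT\bigcirc\bP$ pointwise on $M(t)$, and Proposition~\ref{p:commutator}, which gives $\nabla(\mathcal{D}^t_\bw\bT)-\mathcal{D}^t_\bw(\nabla\bT)=\nabla\bT\bigcirc\nabla\bw^T$. Since $\bT\in\mathbb{T}_q(\Omega)$ is ambient, both $\bT$ and $\mathcal{D}^t_\bw\bT$ admit Cartesian gradients, and Proposition~\ref{equiv_ambient} lets us compute material derivatives of traces through ambient quantities. First we write
\begin{align*}
\nablaM(\mathcal{D}^t_\bw\bT)=\nabla(\mathcal{D}^t_\bw\bT)\bigcirc\bP
=\big(\mathcal{D}^t_\bw(\nabla\bT)\big)\bigcirc\bP+(\nabla\bT\bigcirc\nabla\bw^T)\bigcirc\bP .
\end{align*}
The associativity in Proposition~\ref{nablaMtonabla} turns the last term into $\nabla\bT\bigcirc(\nabla\bw^T\bigcirc\bP)=\nabla\bT\bigcirc\nablaM\bw^T$, again by Proposition~\ref{nablaMtonabla} applied to $\bw^T$.

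Second, we compute $\mathcal{D}^t_\bw(\nablaM\bT)=\mathcal{D}^t_\bw(\nabla\bT\bigcirc\bP)$. For this we need a product (Leibniz) rule for the material derivative across the bilinear operation $\bigcirc$:
\begin{align*}
\mathcal{D}^t_\bw(\bA\bigcirc\bB)=(\mathcal{D}^t_\bw\bA)\bigcirc\bB+\bA\bigcirc(\mathcal{D}^t_\bw\bB).
\end{align*}
We prove it by induction on the rank of $\bA$, using the recursion \eqref{bigcurc} and the componentwise nature of Definition~\ref{def:material_deriv}. The base case $\bu^T\bigcirc\bB=\bu^T:\bB=\sum_*\bu^*\bB^*$ reduces to the scalar product rule for the limit in Definition~\ref{def:material_deriv}, exactly as in the proof of Proposition~\ref{product_rules}. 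Combined with Lemma~\ref{p:material_n_P}, which gives $\mathcal{D}^t_\bw\bP=-2\bC[\bw]$, this yields
\begin{align*}
\mathcal{D}^t_\bw(\nablaM\bT)=\big(\mathcal{D}^t_\bw\nabla\bT\big)\bigcirc\bP-2\,\nabla\bT\bigcirc\bC[\bw].
\end{align*}

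Subtracting the two displays cancels the $(\mathcal{D}^t_\bw\nabla\bT)\bigcirc\bP$ terms and leaves $\nabla\bT\bigcirc\nablaM\bw^T+2\nabla\bT\bigcirc\bC[\bw]$. Bilinearity of $\bigcirc$, which follows from its recursive definition, then gives the claim.

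The main obstacle is justifying the material-derivative product rule for $\bigcirc$ together with the identification $\nablaM\bT=\nabla\bT\bigcirc\bP$ along the evolution. The latter identity holds only on $M(t)$, whereas $\mathcal{D}^t_\bw$ differentiates along material paths $\gamma\in\Pi$ that stay on $M(t+s)$. So we must use that $\bP(t)$ is defined on every $M(t)$ (via the level sets $d_i(\cdot,t)$), making $\nabla\bT\bigcirc\bP$ a legitimate time-dependent field on $M(t)$ to which Definition~\ref{def:material_deriv} applies. The hypothesis $|\nabla d_i|=1$ of Lemma~\ref{p:material_n_P} is inherited as a standing assumption.
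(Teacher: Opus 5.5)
Your proposal is correct and is essentially the paper's proof: you write $\nablaM\bT=\nabla\bT\bigcirc\bP$, apply a Leibniz rule for $\mathcal{D}^t_\bw$ across $\bigcirc$ together with $\mathcal{D}^t_\bw\bP=-2\bC[\bw]$, and handle the remaining term via Proposition~\ref{p:commutator} and the associativity $(\nabla\bT\bigcirc\nabla\bw^T)\bigcirc\bP=\nabla\bT\bigcirc\nablaM\bw^T$. The only difference is that you spell out the inductive justification of the product rule for $\bigcirc$, which the paper simply invokes as ``the product rules''.
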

\begin{proof} By Proposition~\ref{nablaMtonabla}, $\nablaM \bT=\nabla{}\bT\bigcirc \bP$ and the product rules give
    \begin{align*}
\mathcal{D}^t_\bw{\nablaM \bT}=\mathcal{D}^t_\bw{(\nabla{}\bT\bigcirc \bP)}=(\nabla{}\bT)\bigcirc(\mathcal{D}^t_\bw{\bP})+\mathcal{D}^t_\bw{(\nabla{}\bT)}\bigcirc\bP\,,
    \end{align*}
 where
$(\nabla{}\bT)\bigcirc\mathcal{D}^t_\bw{\bP}=-(\nabla \bT)\bigcirc2\bC(\bw)
$ by Proposition~\ref{p:material_n_P}. We treat  the second term with Proposition~\ref{p:commutator} and apply $\bigcirc \bP$ to the result,
\begin{align*}
\mathcal{D}^t_\bw{(\nabla{}\bT)}\bigcirc\bP=\nabla(\mathcal{D}^t_\bw{\bT})\bigcirc\bP
-(\nabla{}\bT\bigcirc\nabla\bw^T)\bigcirc\bP\,,
\end{align*} 
where $(\nabla{}\bT\bigcirc\nabla\bw^T)\bigcirc\bP = \nabla{}\bT\bigcirc(\nabla\bw^T\bigcirc\bP) = \nabla{}\bT\bigcirc\nablaM\bw^T$ by Proposition~\ref{nablaMtonabla}.
\end{proof}

\subsection{Proof of Theorem \ref{thm:dirichlet_rate}} \label{prof:dirichlet_rate}
\begin{proof} For each fixed $t\in (0,T)$, one can locally extend a given $\bT(t)\in\mathbb{T}_q(M(t))$  to an ambient field $\bT^e\equiv\bT^e(t)\in \mathbb{T}_q(\Omega_\delta(t))$ satisfying  \eqref{extension} granted by Assumption~\ref{assump:normal}. Hence, for every $t\in (0,T)$, the Dirichlet energy can be equivalently computed as follows,
\begin{align}
   E(t)=\frac12 \int_{M(t)} \nablaM\bT(t) \odot \nablaM\bT(t) =  \frac12\int_{M(t)} \nabla\bT^e(t) \odot \nabla\bT^e(t) \,.
\end{align}
Proposition~\ref{reynolds} and straightforward  product rules give for the rate $\frac{d}{dt}E(t)$ that     \begin{align*}
        \frac{d}{dt}E(t) - \frac12\int_{M(t)}|\nabla \bT^e(t)|^2 \divM\bw = \int _{M(t)} \mathcal{D}^t_\bw(\nabla \bT^e(t)) \odot \nabla
       \bT^e(t)  = \int _{M(t)} \mathcal{D}^t_\bw(\nabla \bT^e(t)) \odot \nablaM
       \bT(t)
       \end{align*}
       and, recalling \eqref{extension} and applying    Lemma~\ref{l:surface_sonnet_virga} with $\bT^e(t)\in \mathbb{T}_q(\Omega_\delta(t))$, we rewrite the last term as
       \begin{align*}
        \int _{M(t)} \mathcal{D}^t_\bw(\nabla \bT^e) \odot \nablaM
       \bT = \int _{M(t)}  \nablaM(\mathcal{D}^t_\bw{\bT^e(t)})\odot\nablaM \bT - \int _{M(t)}(\nabla{}\bT^e(t)\bigcirc (2\bC[\bw]+\nablaM\bw^T))\odot{}\nablaM \bT\,.
    \end{align*}
The claim follows from Proposition~\ref{equiv_ambient}, Lemma~\ref{vanishing_Cw}, Definition~\ref{def:covariant_derivative} and   $\mathbb{P}(\bC[\bw])=0$ by Lemma~\ref{p:material_n_P}.
\end{proof}

\section{Conclusions}

 The presented in Section~\ref{sec:calculus}  recursive version of general rank tensor calculus has an algorithmically favorable structure of complete trees that was introduced in Section~\ref{sec:recursive}. This notational framework has been proven to be useful in the derivation of novel statements in the context of general dimension and codimension  for the three applications in Sections~\ref{sec:Euler_eq}, \ref{sec:stress_submanifold} and \ref{sec:evolving_dirichlet}.
We conclude  with examples of future directions that can emanate   directly from the framework developed in this paper:
\begin{enumerate}
    \item Numerical analysis and computations for tensor-valued PDEs on submanifolds.
    \item Variational calculus on evolving submanifolds and continuum modeling.
    \item Shape classification using the eigenspectra of the Laplace-Beltrami operator for general rank tensors.
\end{enumerate}

\printbibliography
\end{document}